\documentclass[11pt]{article}
\usepackage{epic,latexsym,amssymb}
\usepackage{color}
\usepackage{tikz}
\usepackage[colorinlistoftodos]{todonotes}
\usepackage{pgfplots}

\usepackage{mathrsfs}
\usepackage[colorlinks]{hyperref}
\usepackage{amsfonts,epsf,amsmath}
\usepackage{cite}
\usepackage{lineno}
\usepackage[lined,ruled,linesnumbered]{algorithm2e}
\usepackage{multicol}
\usepackage{subfig}
\usepackage{epic}
\usepackage{amssymb}
\usepackage{latexsym}
\usepackage{tikz}
\usetikzlibrary{decorations.markings}
\usepackage{pgf}

\usetikzlibrary{arrows}

\usepackage{comment}

\textwidth=6.5in \textheight=8.5in \evensidemargin=0in
\oddsidemargin=0in \topmargin=0in \topskip=0pt \baselineskip=12pt
\parskip=8pt
\parindent=1em

\newtheorem{theorem}{Theorem}

\newtheorem{lemma}{Lemma}

\newtheorem{definition}{Definition}
\newtheorem{claim}{Claim}
\newtheorem{subclaim}{Claim}[claim]

\newtheorem{observation}{Observation}

\newtheorem{problem}{Problem}

\newcommand{\CG}{{\rm CG}}

\newcommand{\SP}{{\rm{SP}-\rm{graph} }}
\newcommand{\SPs}{\rm{SP}-\rm{graphs} }
\newcommand{\SCC}{\rm{SCC}}
\newcommand{\SC}{\rm{SC }}

\newcommand{\qed}{$\Box$}
\newcommand{\QED}{$\Box$}

\newcommand{\smallqed}{{\tiny ($\Box$)}}

\newcommand{\1}{\vspace{0.1cm}}
\newcommand{\2}{\vspace{0.2cm}}

\let\oldenumerate\enumerate
\renewcommand{\enumerate}{
  \oldenumerate
  \setlength{\itemsep}{1pt}
  \setlength{\parskip}{0pt}
  \setlength{\parsep}{0pt}
}

\def\vertex(#1){\put(#1){\circle*{2}}}
\def\vertexo(#1){\put(#1){\circle{2}}}
\def\vert(#1){\put(#1){\circle*{1.5}}}
\def\verto(#1){\put(#1){\circle{1.5}}}
\def\lab(#1)#2{\put(#1){\makebox(0,0)[c]{#2}}}
\setlength{\unitlength}{0.5mm}

\begin{document}

\title{Singleton Coalition Graph Chains }

\author{$^1$Davood Bakhshesh, \, $^2$Michael A. Henning\thanks{Research supported in part by the University of Johannesburg.}, \, and $^3$Dinabandhu Pradhan\thanks{Corresponding author.} \\ \\
$^1$Department of Computer Science\\
University of Bojnord \\
Bojnord, Iran \\
\small \tt Email: d.bakhshesh@ub.ac.ir \\
\\
$^2$Department of Mathematics and Applied Mathematics \\
University of Johannesburg \\
Auckland Park, 2006 South Africa\\
\small \tt Email: mahenning@uj.ac.za\\
\\
$^3$Department of Mathematics and Computing \\
Indian Institute of Technology (ISM) \\
Dhanbad, India \\
\small \tt Email: dina@iitism.ac.in}

\date{}
\maketitle

\begin{abstract}
Let $G$ be graph  with vertex set $V$ and order $n=|V|$. A coalition in $G$  is a combination of two distinct sets, $A\subseteq V$ and $B\subseteq V$, which are disjoint and are not dominating sets of $G$, but their union $A\cup B$ is a dominating set of $G$. A coalition partition of $G$ is a partition $\mathcal{P}=\{S_1,\ldots, S_k\}$ of its vertex set $V$, where each set $S_i\in \mathcal{P}$ is either a dominating set of $G$ with only one vertex, or it is not a dominating set but forms a coalition with some other set $S_j \in \mathcal{P}$. The coalition number ${C}(G)$ is the maximum cardinality of a coalition partition of $G$. To represent a coalition partition $\mathcal{P}$ of $G$, a coalition graph $\CG(G, \mathcal{P})$ is created, where each vertex of the graph corresponds to a member of $\mathcal{P}$ and two vertices are adjacent if and only if their corresponding sets form a coalition in $G$. A coalition partition $\mathcal{P}$ of $G$ is a singleton coalition partition if every set in $\mathcal{P}$ consists of a single vertex. If a graph $G$ has a singleton coalition partition, then $G$ is referred to as a singleton-partition graph. A graph $H$ is called  a singleton coalition graph of a graph $G$ if there exists a singleton coalition partition $\mathcal{P}$ of $G$ such that the coalition graph $\CG(G,\mathcal{P})$ is isomorphic to $H$. A singleton coalition graph chain  with an initial graph $G_1$ is defined as the sequence $G_1\rightarrow G_2\rightarrow G_3\rightarrow \cdots$ where all graphs $G_i$ are singleton-partition graphs, and $\CG(G_i, \Gamma_1)=G_{i+1}$, where $\Gamma_1$ represents a singleton coalition partition of $G_i$. In this paper, we address two open problems posed by Haynes et al. We characterize all graphs $G$ of order $n$ and minimum degree $\delta(G)=2$ such that $ C(G )= n$. Additionally, we investigate the singleton coalition graph chain starting with graphs $G$ where $\delta(G)\le 2$.
\end{abstract}

\indent
{\small \textbf{Keywords:}  Coalition number; Domination number; Coalition partition; Coalition graphs.} \\
\indent {\small \textbf{AMS subject classification:} 05C69}

\section{Introduction}

Let $G = (V(G),E(G))$ be a graph with vertex set $V(G)$ and edge set $E(G)$, and of order $n(G) = |V(G)|$ and size $m(G) = |E(G)|$. If the graph $G$ is clear from context, we write $V$ and $E$ rather than $V(G)$ and $E(G)$. The \emph{open neighborhood} $N_G(v)$ of a vertex $v$ in $G$ is the set of vertices adjacent to $v$, while the \emph{closed neighborhood} of $v$ is the set $N_G[v] = \{v\} \cup N_G(v)$. For a set $S \subseteq V(G)$, its \emph{open neighborhood} is the set $N_G(S) = \cup_{v \in S} N_G(v)$, and its \emph{closed neighborhood} is the set $N_G[S] = N_G(S) \cup S$.  We denote the \emph{degree} of $v$ in $G$ by $\deg_G(v) = |N_G(v)|$. The vertex $v$ is called a \emph{full vertex} if it is adjacent to all vertices $V \setminus \{v\}$, that is, if $N_G[v] = V$.  An \emph{isolated vertex} is a vertex of degree~$0$, and an \emph{isolate}-\emph{free graph} is a graph that contains no isolated vertex. The minimum and maximum degrees in $G$ are denoted by $\delta(G)$ and $\Delta(G)$, respectively. For a positive integer $k$, we let $[k] = \{1, \ldots, k\}$.

A subset $D$ of $V$ is called a \emph{dominating set} in $G$ if every vertex of $G$ not in $D$ is adjacent to at least one vertex of $D$. The minimum cardinality of a dominating set of $G$ is the \emph{domination number} of $G$, denoted by $\gamma(G)$. Domination in graphs is now very well studied in the literature. If $X,Y \subseteq V$, then set $X$ dominates the set $Y$ if $Y \subseteq N_G[X]$, that is, every vertex $y \in Y$ belongs to $X$ or is adjacent to a vertex of $X$. A comprehensive treatment of domination in graphs can be found in the recent so-called ``domination books''~\cite{HaHeHe-20,HaHeHe-21,HaHeHe-23,HeYe-book}.

In~2020, Haynes, Hedetniemi, Hedetniemi, McRae, and Mohan~\cite{coal0} introduced and studied the concept of a \emph{coalition} in a graph. They defined a pair of sets $A,B \subseteq V$ to be a \emph{coalition} in $G = (V,E)$ if neither $A$ not $B$ is a dominating sets in $G$, but $A \cup B$ is a dominating set of $G$. Such a pair $A$ and $B$ is said to \emph{form a coalition}, and $A$ and $B$ are called \emph{coalition partners}.

A vertex partition ${\cal P} = \{S_1,S_2,\ldots, S_k\}$ of $V$ is a \emph{coalition partition} of $G$, abbreviated a $c$-\emph{partition} in~\cite{coal0}, if every set $S_i\in {\cal P}$ is either a dominating set of $G$ with cardinality $|S_i|=1$ or is not a dominating set of $G$ but forms a coalition with some other set $S_j$ in the partition ${\cal P}$. The maximum cardinality of a coalition partition of $G$ is the \emph{coalition number} of $G$,  denoted by $\mathcal{C}(G)$. A motivation of this graph theoretic model of a coalition is given by Haynes et al. in their series of papers on coalitions in~\cite{coal0,coal1,coal2,coal3}. In these papers, a special emphasis is on studying the coalition number in trees and cycles. Additionally, in \cite{coal2}, Haynes et al. introduced upper bounds on the coalition number of a graph in terms of the minimum and maximum degree.

In \cite{coal0}, Haynes et al. defined a coalition graph as follows. Given a coalition partition ${\cal P} = \{S_1, \ldots, S_k\}$ of a graph $G$, a graph called the \emph{coalition graph} of $G$, denoted by $\CG(G,{\cal P})$, is assigned to this partition, where the vertices correspond to the members of $\cal P$ and two vertices are adjacent if and only if the corresponding members in ${\cal P}$ form a coalition in the graph $G$.  In \cite{coal1}, Haynes et al. examined coalition graphs in trees, paths, and cycles. Additionally, in \cite{coal3} they showed that every graph is the coalition graph of some graph. In \cite{coal0}, Haynes et al. posed the following open problem.

\begin{problem}[\cite{coal0}]
\label{prob1}
{\rm Characterize all graphs $G$ of order $n$ with  $C(G)=n$.}
\end{problem}

In \cite{bakhcoal}, Bakhshesh et al.  characterized  all graphs $G$ of order $n$ with $\delta(G) \le 1$ whose coalition number is $n$, thereby partially solving Problem~\ref{prob1}.  They also identified all trees whose coalition number is $n-1$. In this paper, we make further progress in solving  Problem~\ref{prob1}. For this purpose, we characterize all graphs $G$ of order $n$ and $\delta(G)=2$ satisfying $C(G)=n$.

In \cite{coal3}, Haynes et al. defined the following three concepts associated with a coalition graph:  \emph{singleton-partition graph}, \emph{singleton coalition graph} and \emph{singleton coalition graph chain}. Let $G$ be a graph of order $n$. Let $\Gamma_1$  be the partition of $V(G)$ such that every member of $\Gamma_1$ is a singleton set (a set with only one element). We call such a coalition partition of $G$ a \emph{$\Gamma_1$-partition} of $G$. The graph $G$ is a \emph{singleton-partition graph} ({\emph{SP}-\emph{graph}) if it has a $\Gamma_1$-partition. If $G$ is a \SP, then the coalition graph $\CG(G,\Gamma_1)$ is called the \emph{singleton coalition graph} ({\emph{SC}-\emph{graph}) of $G$. A \emph{singleton coalition graph chain} with an initial graph $G_1$ is defined as a sequence
\[
{\cal S}_{G_1} := G_1\rightarrow G_2 \rightarrow G_3 \rightarrow \cdots \rightarrow G_k
\]
where all graphs $G_i$ are singleton-partition graphs, and $\CG(G_i, \Gamma_1)=G_{i+1}$ for all $i \in [k-1]$. The length of the sequence ${\cal S}_{G_1}$ is defined as $k-1$. If ${\cal S}_{G_1}$ is an infinite sequence of graphs $G_i$, then its length is considered to be infinity ($\infty$). By convention, if all graphs $G_i$ in ${\cal S}_{G_1}$ are isomorphic, the length of ${\cal S}_{G_1}$ is defined to be zero.  We denote a sequence ${\cal S}_{G_1}$ of maximum possible length (starting from the initial graph $G_1$) by \emph{$G_1$}-\emph{\SC chain} and we denote its length by $L_{\SCC}(G_1)$. For example, the $C_4$-SC chain and $C_5$-SC chain are given by
\[
C_4\rightarrow K_4\rightarrow \overline{K}_4
\hspace*{0.5cm} \mbox{and} \hspace*{0.5cm}
C_5\rightarrow C_5\rightarrow C_5\rightarrow \cdots ,
\]
respectively, while the $P_3$-SC chain is given by the sequence
\[
P_3\rightarrow K_1\cup K_2 \rightarrow P_3 \rightarrow K_1 \cup K_2 \rightarrow \cdots.
\]

Hence, $L_{\SCC}(C_4)=2$,  $L_{\SCC}(C_5)=0$, and $L_{\SCC}(P_3)=\infty$. In~\cite{coal3},  Haynes et al. posed the following open problem.

\begin{problem}[\cite{coal3}]
\label{prob3}
{\rm Investigate singleton coalition graph chains.}
\end{problem}

In addition to presenting Problem \ref{prob3}, Haynes et al. also posed the following question: ``Do arbitrarily long singleton coalition graph chains exist?''   In this paper, we show that there exist graphs $G$ such that the length of the $G$-SC chain is infinity. Moreover, we characterize $G$-SC chains for all singleton coalition graph $G$ with $\delta(G)\le 2$.

\section{Preliminaries}

In this section, we present some known and preliminary results on the coalition number of graphs, as well as additional definitions. In \cite{bakhcoal}, the authors characterize all \SPs $G$ with $\delta(G) \in \{0,1\}$. For the graphs with $\delta(G)=0$, they proved the following result.

\begin{theorem}[\cite{bakhcoal}]
\label{thmdelta0}
If $G$ is a graph of order~$n$ with $\delta(G) = 0$, then ${C}(G) = n$ if and only if $G \cong  K_1 \cup K_{n-1}$.
\end{theorem}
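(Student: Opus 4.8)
The plan is to reduce the equality $C(G)=n$ to a statement about the all-singletons partition $\Gamma_1$. Since any coalition partition of an $n$-vertex graph has at most $n$ blocks, we have $C(G)\le n$ always, and $C(G)=n$ holds precisely when $\Gamma_1$ (every block a single vertex) is itself a coalition partition of $G$. Thus both directions come down to analyzing exactly when $\Gamma_1$ is valid, under the hypothesis $\delta(G)=0$.

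For the forward direction I would fix an isolated vertex $z$ (which exists since $\delta(G)=0$) and exploit that $z$ can be dominated only by itself, i.e. $N_G[z]=\{z\}$. First observe that, because no other vertex dominates $z$, the graph $G$ has no full vertex, so no singleton block is a dominating set; hence in $\Gamma_1$ every block $\{v\}$ must form a coalition with some other block $\{w\}$. Next, for any vertex $v\neq z$, in order that $N_G[v]\cup N_G[w]=V$ the vertex $z$ must be covered, and since $N_G[z]=\{z\}$ this forces $w=z$. Consequently $N_G[v]\cup\{z\}=V$, so $v$ is adjacent to every vertex of $V\setminus\{z,v\}$. As this holds for every $v\neq z$, the induced subgraph on $V\setminus\{z\}$ is complete, and since $z$ is isolated we conclude $G\cong K_1\cup K_{n-1}$.

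For the converse I would take $G=K_1\cup K_{n-1}$ with $z$ the vertex of the $K_1$ and verify directly that $\Gamma_1$ is a coalition partition: no singleton is a dominating set (the block $\{z\}$ misses all other vertices, and each block $\{v\}$ with $v$ in the $K_{n-1}$ misses $z$), while each such $\{v\}$ together with $\{z\}$ dominates $V$, and symmetrically $\{z\}$ forms a coalition with any such $\{v\}$. Hence $\Gamma_1$ is valid, giving $C(G)\ge n$, and combined with $C(G)\le n$ we obtain $C(G)=n$.

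The argument is short, and its only delicate point is the observation that the isolated vertex is the unique possible coalition partner of every other vertex; I would state this cleanly and check the small case $n=2$, where $K_{n-1}=K_1$ and $G=K_1\cup K_1$ has two isolated vertices, confirming that the conclusion that the graph induced on $V\setminus\{z\}$ is $K_{n-1}$ remains correct there.
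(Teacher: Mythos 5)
Your proof is correct. Note that the paper states this result without proof, citing it from an earlier work of Bakhshesh, Henning, and Pradhan, so there is no in-paper argument to compare against; your reduction to the validity of $\Gamma_1$, followed by the observation that the isolated vertex $z$ satisfies $N_G[z]=\{z\}$ and is therefore the unique possible coalition partner of every other singleton, is the natural and standard route, and your handling of the small cases (including $n=2$) is sound.
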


For graphs with $\delta(G)=1$ with exactly one full vertex, they proved the following result.

\begin{theorem}[\cite{bakhcoal}]
\label{lemdelta1full}
If $G$ is a graph of order~$n \ge 3$ with $\delta(G)=1$ and with exactly one full vertex, then ${C}(G)=n$ if and only if $G$ is obtained from the graph $K_1 \cup K_{n-1}$ by adding an edge joining the isolated vertex to an arbitrary vertex of the complete graph $K_{n-1}$.
\end{theorem}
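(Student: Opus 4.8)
The plan is to exploit the elementary observation that $C(G)=n$ holds if and only if the singleton partition $\Gamma_1=\{\{v\}:v\in V\}$ is itself a coalition partition; indeed $n$ is the largest cardinality any partition of $V$ can have, and it is attained only by $\Gamma_1$. Translating the coalition conditions to singletons, $\Gamma_1$ is a coalition partition precisely when every vertex $v$ is either a full vertex (so that $\{v\}$ dominates) or admits a non-full vertex $z$ with $N[v]\cup N[z]=V$. I would first fix notation: let $f$ denote the unique full vertex and, using $\delta(G)=1$, let $u$ be a vertex of degree~$1$. Since $f$ is adjacent to every other vertex, the single neighbour of $u$ is $f$, giving $N[u]=\{u,f\}$.

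For the forward implication I would assume $C(G)=n$ and extract the structure of $G$. Let $y$ be any vertex other than $u$ and $f$; since $f$ is the only full vertex, $y$ is not full and hence needs a non-full coalition partner $z$ with $N[y]\cup N[z]=V$. The key step is to pin down $z$: the vertex $u$ must lie in $N[y]\cup N[z]$, but $u\notin N[y]$ because the only neighbour of $u$ is $f\neq y$, and the only vertices whose closed neighbourhood contains $u$ are $u$ and $f$, of which $f$ is excluded for being full. Hence $z=u$, and $N[y]\cup N[u]=V$ together with $N[u]=\{u,f\}$ forces $N[y]=V\setminus\{u\}$. Running this over all $y\in V\setminus\{u,f\}$ shows that every such vertex is adjacent to all vertices except $u$; combined with $f$ being full and $u$ being a leaf at $f$, this says exactly that $V\setminus\{u\}$ induces $K_{n-1}$ while $u$ is joined only to $f$, which is the claimed graph.

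For the reverse implication I would start from the described graph, with full vertex $f$ and leaf $u$, and verify directly that $\Gamma_1$ is a coalition partition. The singleton $\{f\}$ dominates, so it needs no partner. For the leaf $u$, any vertex $x\in V\setminus\{u,f\}$ (one exists since $n\ge 3$) is non-full and satisfies $N[u]\cup N[x]=\{u,f\}\cup(V\setminus\{u\})=V$, so $\{u\}$ and $\{x\}$ form a coalition. Finally, for each remaining vertex $y\in V\setminus\{u,f\}$ we have $N[y]=V\setminus\{u\}$, whence $N[y]\cup N[u]=V$ and $\{y\}$ forms a coalition with $\{u\}$. Thus every member of $\Gamma_1$ is either a dominating singleton or has a coalition partner, so $C(G)=n$.

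The main obstacle, and really the only nonroutine point, is the partner-identification step in the forward direction: recognising that covering the degree-one vertex $u$ forces the coalition partner of every non-full vertex to be $u$ itself, which is where the hypotheses $\delta(G)=1$ and ``exactly one full vertex'' are both used. Once that is established, the rest is bookkeeping with closed neighbourhoods.
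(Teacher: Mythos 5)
Your proof is correct. The key step --- observing that the degree-one vertex $u$ satisfies $N[u]=\{u,f\}$, so that covering $u$ forces the coalition partner of every non-full vertex $y$ to be $u$ itself (since $f$ is excluded as a full vertex), whence $N[y]=V\setminus\{u\}$ --- is exactly the right leverage point, and both directions are complete. The paper itself only cites this theorem from an earlier reference without reproducing a proof, so there is nothing to compare against; your argument is the natural singleton-partition analysis and stands on its own.
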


For the graphs with $\delta(G)=1$ and with no full vertex, they defined a family $\mathcal{F}_1$ of graphs as follows.

\begin{definition}[\cite{bakhcoal}]{\rm (The family $\mathcal{F}_1$)}
\label{defn1}
{\rm Let $G$ be a graph constructed as follows. The vertex set of $G$ consists of $\{x, y, w\}$ and two disjoint sets, $P$ and $Q$, with $P \cap Q \cap \{x, y, w\} = \emptyset$ and $|P \cup Q| \ge 1$. If $Q$ is non-empty, then $|Q| \ge 2$. To define the edge set $E(G)$, we start by assigning $y$ as the unique neighbor of $x$, and so $N_G(x) = \{y\}$. We then join $w$ to all vertices in $P \cup Q$, and so $N_G(w) = P \cup Q$. For each vertex $p \in P$, we join it to all vertices in $(P \cup Q) \setminus \{p\}$. If $Q$ is non-empty, we join $y$ to all vertices in $Q$, and so $Q \cup \{x\}$ is a subset of $N(y)$. Furthermore if $Q \ne \emptyset$, then we add edges between vertices in $Q$, including the possibility of adding no edge, in such a way that $G[Q]$ does not contain a full vertex. Finally, we add any number of edges between $y$ and vertices in $P$. Let $G$ denote the resulting isolate-free graph. Let ${\cal F}_1$ be the family consisting of all such graphs $G$ constructed in this way. }
\end{definition}

We note that if $G \in \mathcal{F}_1$ has order~$n$ and is disconnected, then $G \cong K_2 \cup K_{n-2}$. In \cite{bakhcoal}, the authors  presented the following result.

\begin{theorem}[\cite{bakhcoal}]
\label{thmdelta1nf}
A graph $G$ with $\delta(G)=1$ and with no full vertex is a \SP if and only if $G\in{\cal F}_1$.
\end{theorem}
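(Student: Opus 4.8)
The plan is to exploit the single degree-one vertex to force an extremely rigid structure on any $\Gamma_1$-partition. Let $x$ be a vertex of degree one and let $y$ be its unique neighbor, so that $N_G[x]=\{x,y\}$. The first observation is that since every dominating set must contain a vertex of $N_G[x]=\{x,y\}$, every coalition pair of singletons $\{u\},\{v\}$ (whose union dominates) must satisfy $\{u,v\}\cap\{x,y\}\neq\emptyset$. Consequently, in a $\Gamma_1$-partition every vertex $v\notin\{x,y\}$ forms a coalition with $x$ or with $y$. I would then record the clean dichotomy: calling a vertex $v\neq x,y$ \emph{type-X} when $N_G[v]\supseteq V\setminus\{x,y\}$ (equivalently, $v$ is adjacent to all of $V\setminus\{x,y,v\}$), the pair $\{v\},\{x\}$ is a coalition exactly when $v$ is type-X, because $N_G[x]=\{x,y\}$ and $G$ has no full vertex. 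Hence every non-type-X vertex must form a coalition specifically with $y$.

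For the direction $G\in\mathcal{F}_1\Rightarrow G$ is a \SP, I would simply exhibit a coalition partner for every singleton in $\Gamma_1$: the pair $\{x\},\{w\}$ works since $N_G[x]\cup N_G[w]=\{x,y\}\cup(\{w\}\cup P\cup Q)=V$; every $p\in P$ and every $q\in Q$ pairs with $y$, because $N_G[y]\supseteq\{x,y\}\cup Q$ absorbs $x$ and $Q$ while $N_G[p],N_G[q]\supseteq\{w\}\cup P$; and $y$ itself pairs with any $p\in P$ when $P\neq\emptyset$, and with $w$ when $P=\emptyset$ (so $Q\neq\emptyset$), using $N_G[w]=\{w\}\cup Q$. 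Since $G$ has no full vertex, no singleton dominates, so each of these pairs is a genuine coalition; this is routine case-checking.

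The substance is the converse. Given a \SP $G$ with $\delta(G)=1$ and no full vertex, I would let $A$ be the set of type-X vertices and $B=V\setminus(\{x,y\}\cup A)$ the non-type-X vertices, aiming to produce the decomposition with $Q:=B$ and $\{w\}\cup P:=A$ for a suitably chosen $w$. Two facts must be established, and they are the crux of the argument. First, \emph{every vertex of $B$ is adjacent to $y$}: if some $z\in B$ had $z\not\sim y$, then since $z$ is not type-X it misses some $v^\ast\in V\setminus\{x,y,z\}$; such a $v^\ast$ cannot pair with $y$ (the pair $\{v^\ast\},\{y\}$ would fail to dominate $z$, as $z\notin N_G[v^\ast]\cup N_G[y]$), so $v^\ast$ must pair with $x$ and hence be type-X, forcing $v^\ast\sim z$ and contradicting $z\not\sim v^\ast$. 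Second, \emph{some type-X vertex is non-adjacent to $y$}: since $y$ is not full it misses some $z\in V\setminus\{x,y\}$, and by the first fact a non-type-X vertex would be adjacent to $y$, so this $z$ is type-X and non-adjacent to $y$; I designate it $w$ and put $P:=A\setminus\{w\}$.

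Finally I would verify that $(x,y,w,P,Q)$ meets every requirement of Definition~\ref{defn1}: $w$ type-X with $w\not\sim y$ gives $N_G(w)=P\cup Q$; each $p\in P$ being type-X gives adjacency to all of $(P\cup Q)\setminus\{p\}$ and to $w$; each $q\in Q$ is adjacent to $w$, to all of $P$ (these are type-X), and to $y$ (by the first fact); no $q\in Q$ can be full in $G[Q]$ since together with its forced adjacencies to $A\cup\{w\}$ it would then be type-X; and $|B|\neq1$ together with $n\geq4$ handle the cardinality conditions $|P\cup Q|\geq1$ and $|Q|\geq2$. The main obstacle is the pair of facts in the previous paragraph---particularly the first---where one must convert ``$z$ is a non-neighbor of $y$'' into the type-X-ness of $z$'s non-neighbors and then derive a contradiction; once the reduction ``every non-$\{x,y\}$ vertex partners with $x$ or $y$'' is in hand, the remainder is bookkeeping against the definition of $\mathcal{F}_1$.
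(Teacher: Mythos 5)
This theorem is imported from the earlier paper \cite{bakhcoal}; the present paper states it without proof, so there is no in-paper argument to compare yours against. Judged on its own, your proof is correct and complete in all essentials: the reduction that every coalition pair of singletons must meet $\{x,y\}$, the characterization of the partners of $\{x\}$ as the ``type-X'' vertices, and your two key facts (every non-type-X vertex is adjacent to $y$; some type-X vertex is non-adjacent to $y$) do reconstruct exactly the data $(x,y,w,P,Q)$ of Definition~\ref{defn1}, and the verification of the edge conditions is right. The one assertion you state without justification is $|Q|=|B|\neq 1$; it does follow in one line from your own dichotomy --- if $B=\{z\}$, the witness non-neighbor $v^{\ast}\in V\setminus\{x,y,z\}$ of the non-type-X vertex $z$ would lie in $A$, hence be type-X and adjacent to $z$, a contradiction --- so you should spell that out, but it is not a gap in the method.
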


If $G$ is a \SP and $x$ and $y$ are distinct vertices in $G$ such that $\{x\}$ and $\{y\}$ form a coalition, then $\{x,y\}$ is a dominating set of $G$, and so $\{x,y\} \cap N[v] \ne \emptyset$ for every vertex $v \in V(G)$. We state this formally as follows.

\begin{observation}
\label{ob:ob1}
 If  $x$ and $y$ are distinct vertices and are not full vertices in a \SP $G$, then   $\{x\}$ and $\{y\}$ form a coalition if and only if  $\{x,y\} \cap N[v] \ne \emptyset$ for every vertex $v \in V(G)$.
\end{observation}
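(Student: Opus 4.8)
The plan is to prove this as a direct unwinding of the definition of a coalition together with the standard reformulation of domination via closed neighborhoods, so I do not expect any genuine obstacle here — the statement is essentially a definitional equivalence, and the only point requiring a moment's care is the symmetry step in the final translation.

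First I would use the hypothesis that $x$ and $y$ are not full vertices. By definition a full vertex $u$ satisfies $N[u]=V$, so $\{u\}$ is a dominating set precisely when $u$ is full. Hence, since $x$ and $y$ are not full, neither $\{x\}$ nor $\{y\}$ is a dominating set of $G$. This shows that in the definition of a coalition applied to the pair $\{x\}$ and $\{y\}$, the two requirements ``$\{x\}$ is not dominating'' and ``$\{y\}$ is not dominating'' hold automatically. Consequently, by the definition of a coalition, $\{x\}$ and $\{y\}$ form a coalition if and only if the single remaining condition holds, namely that $\{x\}\cup\{y\}=\{x,y\}$ is a dominating set of $G$.

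It then remains to show that $\{x,y\}$ is a dominating set if and only if $\{x,y\}\cap N[v]\ne\emptyset$ for every vertex $v\in V(G)$. I would argue via the standard characterization: a set $S$ is a dominating set of $G$ exactly when $N[S]=V$, that is, when every vertex $v$ lies in $N[s]$ for some $s\in S$. Applying this with $S=\{x,y\}$, the set $\{x,y\}$ is dominating if and only if every vertex $v$ satisfies $v\in N[x]$ or $v\in N[y]$. Here I would invoke the symmetry of the closed-neighborhood relation, namely that $u\in N[v]$ is equivalent to $v\in N[u]$ (both assert $u=v$ or $uv\in E(G)$). Under this symmetry, the statement ``$v\in N[x]$ or $v\in N[y]$'' is equivalent to ``$x\in N[v]$ or $y\in N[v]$'', which is precisely $\{x,y\}\cap N[v]\ne\emptyset$.

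Chaining these equivalences together yields that, under the assumption that $x$ and $y$ are non-full vertices, $\{x\}$ and $\{y\}$ form a coalition if and only if $\{x,y\}$ is dominating if and only if $\{x,y\}\cap N[v]\ne\emptyset$ for every $v\in V(G)$, which is exactly the desired statement. I expect no step to be difficult; the symmetry reformulation in the last paragraph is the only place where one must be explicit, and it is routine.
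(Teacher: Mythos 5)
Your proof is correct and follows essentially the same route as the paper, which treats the observation as a direct unwinding of the definitions: non-fullness guarantees neither singleton is dominating, so the coalition condition reduces to $\{x,y\}$ being a dominating set, which is then restated via closed neighborhoods using the symmetry $v\in N[x]\iff x\in N[v]$. If anything, your write-up is more complete than the paper's, which only sketches the forward direction and leaves the rest as immediate.
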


Now, we assume that $G$ is a \SP with $\delta(G)=1$ and with no full vertex.  By Theorem~\ref{thmdelta1nf}, $G\in {\cal F}_1$.  We next characterize the SC-graph $H$ of $G$. For this purpose, we  define a family ${\cal H}_1$ of graphs.

\begin{definition}{\rm (The family $\mathcal{H}_1$)}
\label{defnH1}
{\rm This family comprises of all bipartite graphs $H$ having two distinct parts $A_1=\{x_1,y_1\}$ and $B_1=P_1\cup \{w_1\}\cup Q_1$, where $w_1$ is a vertex, and  $P_1$ and $Q_1$ are sets of vertices that satisfy the condition $|P_1\cup Q_1|\ge 1$. Additionally, if $Q_1\ne \emptyset$, then $|Q_1|\ge 2$. The edges of $H$ are formed as follows: $y_1$ is adjacent to every vertex in $B_1$, while $x_1$ is adjacent only to the vertices of $P_1$ and $w_1$ (as shown in Figure \ref{figH}).}
\end{definition}

\begin{figure}[hbt]
\begin{center}
{\subfloat[]{\includegraphics[width = 0.4\textwidth]{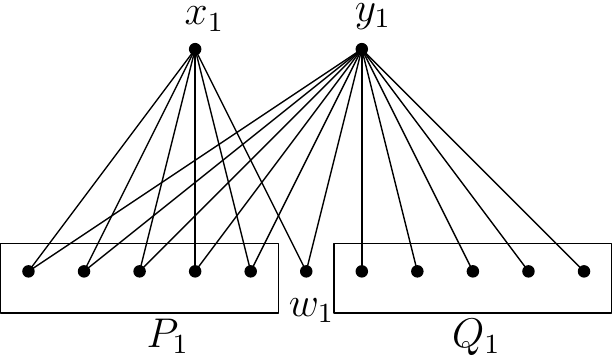}} }
\hspace{1cm}
{\subfloat[]{\includegraphics[width =0.22\textwidth]{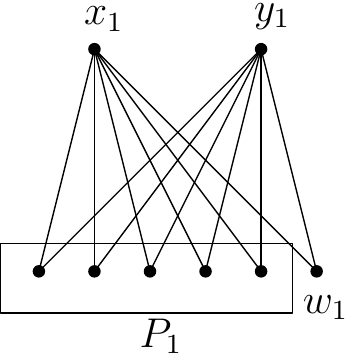}}}
\caption{(a): A graph of ${\cal H}_1$ with $Q_1\neq\emptyset$. (b): A graph of ${\cal H}_1$ with $Q_1= \emptyset$.}
\label{figH}
\end{center}
\end{figure}

The following notation is defined before proving the following theorem: Let $G$ be an \SP and $H$ the SC-graph of $G$. For a vertex $x$ in $G$, we represent the corresponding vertex in $H$ as~$\tilde{x}$. We are now in a position to prove the following result.

\begin{theorem}
\label{thmfh1cg}
If $G\in {\cal F}_1$, then $\CG(G,\Gamma_1)\in {\cal H}_1$.
\end{theorem}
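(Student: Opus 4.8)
The plan is to compute $\CG(G,\Gamma_1)$ directly by deciding, for every pair of distinct vertices of $G$, whether the two corresponding singletons form a coalition. Write $G\in\mathcal{F}_1$ with vertices and sets $x,y,w,P,Q$ as in Definition~\ref{defn1}, and set $x_1=\tilde x$, $y_1=\tilde y$, $w_1=\tilde w$, $P_1=\{\tilde p: p\in P\}$ and $Q_1=\{\tilde q: q\in Q\}$; I will show that under this labeling $\CG(G,\Gamma_1)$ is exactly a graph of $\mathcal{H}_1$ as described in Definition~\ref{defnH1}. Since $G\in\mathcal{F}_1$ has no full vertex, no singleton dominates $G$, so by Observation~\ref{ob:ob1} two singletons $\{u\}$ and $\{v\}$ form a coalition precisely when $\{u,v\}$ dominates $G$. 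Thus the whole argument reduces to checking domination of pairs via closed neighbourhoods.

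The first key step is to pin down the bipartite skeleton. Because $N_G(x)=\{y\}$, any set that dominates $x$ must meet $N_G[x]=\{x,y\}$; consequently every dominating pair of $G$ must contain $x$ or $y$, so every edge of $\CG(G,\Gamma_1)$ is incident with $\tilde x$ or $\tilde y$. In particular there are no edges among the vertices of $B_1=P_1\cup\{w_1\}\cup Q_1$. Moreover $\{x,y\}$ fails to dominate $w$, since $N_G(w)=P\cup Q$ shows $w$ is adjacent to neither $x$ nor $y$, so $\tilde x\tilde y$ is not an edge. Together these facts show that $\CG(G,\Gamma_1)$ is bipartite with parts $A_1=\{x_1,y_1\}$ and $B_1$, which is the required shape; it then remains only to compute the neighbourhoods of $\tilde x$ and $\tilde y$ inside $B_1$.

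For $\tilde y$ I expect to verify that $\{y,u\}$ dominates $G$ for every $u\in\{w\}\cup P\cup Q$: the vertex $y$ dominates $\{x,y\}\cup Q$ (recall $Q\subseteq N_G(y)$), while each of $w$, each $p\in P$, and each $q\in Q$ dominates $\{w\}\cup P$ through its adjacencies to $w$ and to $P$ (every $p\in P$ is joined to all of $(P\cup Q)\setminus\{p\}$ and to $w$, and every $q\in Q$ is joined to $w$ and to all of $P$). Hence $\tilde y$ is adjacent to all of $B_1$. For $\tilde x$ the same check gives dominating pairs for $u\in\{w\}\cup P$, but $\{x,q\}$ with $q\in Q$ must fail: since $|Q|\ge 2$ and $G[Q]$ has no full vertex, some $q'\in Q$ is a non-neighbour of $q$, and this $q'$ is dominated by neither $x$ (whose only neighbour is $y$) nor $q$. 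Thus $\tilde x$ is adjacent to exactly $P_1\cup\{w_1\}$, which together with the $\tilde y$ computation matches the adjacency pattern of $\mathcal{H}_1$; the side conditions $|P_1\cup Q_1|=|P\cup Q|\ge 1$ and ``$Q_1\ne\emptyset\Rightarrow|Q_1|\ge 2$'' are inherited verbatim from the defining conditions on $\mathcal{F}_1$.

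The routine but slightly delicate point, and the only place where the defining restrictions of $\mathcal{F}_1$ are genuinely used, is the non-adjacency $\tilde x\not\sim\tilde q$: it rests exactly on the hypothesis that $G[Q]$ has no full vertex (with $|Q|\ge 2$), which produces the undominated witness $q'$. Everything else is a direct closed-neighbourhood verification, and the degenerate case $Q=\emptyset$ needs no separate treatment beyond observing that then $B_1=P_1\cup\{w_1\}$ and both $\tilde x$ and $\tilde y$ are adjacent to all of $B_1$, which is precisely the $Q_1=\emptyset$ configuration of Figure~\ref{figH}(b).
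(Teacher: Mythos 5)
Your proof is correct and follows essentially the same route as the paper: a direct check, via Observation~\ref{ob:ob1}, of which pairs of singletons dominate $G$, yielding exactly the adjacency pattern of $\mathcal{H}_1$. Your observation that every dominating pair must meet $N_G[x]=\{x,y\}$ (so all edges of the coalition graph are incident with $\tilde x$ or $\tilde y$) is a slightly cleaner justification of the non-adjacencies inside $B_1$ than the paper's bare assertion, but the argument is the same in substance.
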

\begin{proof}
Let $G\in {\cal F}_1$ with vertex set $V = V(G)$ and the edge set $E = E(G)$. Thus,  $V=\{x,y\}\cup P\cup Q \cup \{w\}$, where the vertices $x$, $y$ and $w$ and the vertex subsets $P$ and $Q$ satisfy the conditions defined for the graphs in Definition~\ref{defn1} that belong to the family ${\cal F}_1$. We note that $x$ is a vertex of $G$ with the minimum degree $\delta(G)=1$, and $N_G(x)=\{y\}$.  Let $H_1= \CG(G,\Gamma_1)$. By Observation~\ref{ob:ob1}, if $u \in P \cup \{w\}$, then $\{u\}$ and $\{x\}$ form a coalition, and so $\tilde{u}$  is adjacent to $\tilde{x}$. If $u \in P \cup Q \cup \{w\}$, then $\{u\}$ and $\{y\}$ form a coalition, and so $\tilde{u}$  is adjacent to $\tilde{y}$. Moreover, there is no coalition between $\{u\}$ and $\{u'\}$, where  $u,u' \in P\cup Q\cup \{w\}$, and so $\tilde{u}$ is not adjacent to $\tilde{u}'$ in $H_1$.  Since $G$ has no full vertices, $\{x\}$ and $\{y\}$ do not form a coalition, and so $\tilde{x}$ is not adjacent to $\tilde{y}$ in  $H_1$. If $q\in Q$, then since $G[Q]$ has no full vertex, $\{q\}$ and $\{x\}$ do not form a coalition, and so $\tilde{q}$ is not adjacent to $\tilde{x}$ in $H_1$. By these properties of the graph $H_1$, we infer that $H_1$ satisfies the conditions for the family ${\cal H}_1$, and therefore $H_1\in{\cal H}_1$.\qed
\end{proof}

\section{\SPs with minimum degree two}

In this section,  we characterize all graphs $G$ of order $n$ with $\delta(G)=2$ and $C(G)=n$.  We first present a family ${\cal F}_2$ of graphs.

\begin{definition}[Family ${\cal F}_2$]
\label{def-f2}
{\rm   Let $G$ be a graph constructed as follows. The vertex set of $G$ is defined as $V=\{x,y,z\}\cup L_1\cup R_1\cup R_2\cup L_2\cup W$, where $L_1, R_1, R_2,$ and $L_2$ are pairwise disjoint sets and where the set $W$ may intersect the set $L_1\cup R_1\cup R_2\cup L_2$. The vertex $x$ has degree~$2$ in $G$ with $y$ and $z$ as its unique neighbors, and so $N_G(x)=\{y,z\}$. We now add additional edges to $G$ as follows.}

\begin{enumerate}

\item {\textbf{The family} $\mathbf{{\cal F}_2^1}$.}
{\rm In this case, the sets $W$, $L_1$, $L_2$, and $R_2$ are defined to be empty and the set $R_1$ is non-empty. We join both vertices $y$ and $z$ to every vertex of $R_1$, but do not add an edge between $y$ and $z$. Further, we add edges between vertices in $R_1$, including the possibility of adding no edge. The resulting graph $G$ satisfies $\delta(G) = 2$ and has no full vertex.  Let ${\cal F}_2^1$ be the family consisting of all such graphs $G$ constructed in this way. An example of a graph in the family ${\cal F}_2^1$ is illustrated in Figure \ref{St1pic}.} \2

\item {\textbf{The family} $\mathbf{{\cal F}_2^2}$.}
{\rm  In this case, the sets $W$, $L_2$, and $R_2$ are defined to be empty and both sets $L_1$ and $R_1$ are non-empty. We join the vertex $y$ to every vertex of $L_1 \cup R_1$, but do not add an edge between $y$ and $z$. We join the vertex $z$ to every vertex of $R_1$ but to no vertex of $L_1$. Additionally, we add all edges between vertices in $L_1$ to form the clique $G[L_1]$. Further, we add edges between vertices in $R_1$, including the possibility of adding no edge. Finally, we add any additional edges to $G$ while maintaining a minimum degree of~$2$ (recall that the vertex $x$ has degree~$2$ in $G$) and not creating any full vertex. The resulting graph $G$ satisfies $\delta(G) = 2$ and has no full vertex.  Let ${\cal F}_2^2$ be the family consisting of all such graphs $G$ constructed in this way. An example of a graph in the family ${\cal F}_2^2$ is illustrated in Figure~\ref{St2pic}.} \2

\begin{figure}[ht]
\begin{center}
{\subfloat[]{\label{St1pic}\includegraphics[width = 0.3\textwidth]{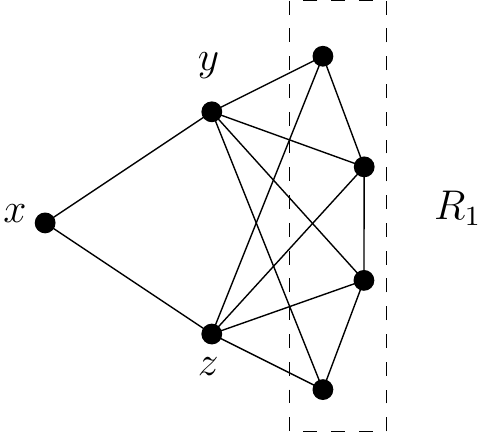}}
}
\hspace{1cm}
{\subfloat[]{\label{St2pic}\includegraphics[width =0.3\textwidth]{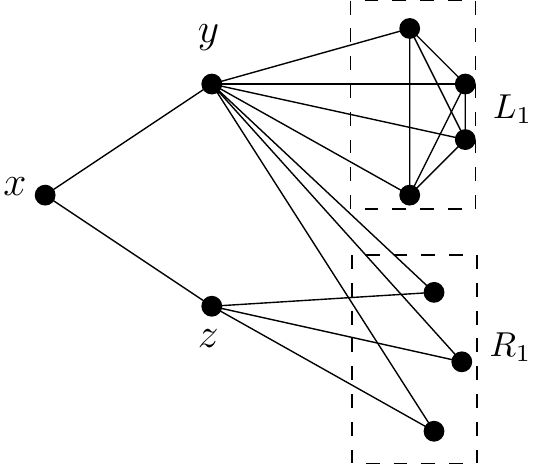}}
}
\caption{  (a): A graph in ${\cal F}_2^1$ (b): A graph in ${\cal F}_2^2$ }
\end{center}
\end{figure}

\item {\textbf{The family} $\mathbf{{\cal F}_2^3}$.}
{\rm  In this case, each of the sets $L_1$, $R_2$, and $W$ is non-empty. Further the set $L_2\subseteq W$, although possibly $L_2 = \emptyset$. We add all edges between vertices in $W$ to form the clique $G[W]$, and we join every vertex of $W$ to all vertices of $L_1 \cup R_1 \cup R_2$.  We join the vertex $y$ to every vertex of $L_1 \cup R_1$, but do not add an edge between $y$ and $R_2$. We join the vertex $z$ to every vertex of $R_1\cup R_2$, but do not add an edge between $z$ and $L_1$. If $R_1$ is not empty, then we join each vertex in $R_1$ to all vertices in either $L_1$ or $R_2$.

\hspace*{0.25cm} Now, either we do not add the edge $yz$, in which case we add all edges between vertices in $L_1$ to form the clique $G[L_1]$ and we add all edges between vertices in $R_2$ to form the clique $G[R_2]$, or we do add the edge $yz$, in which case we join each vertex in $L_1$ to all vertices in either $L_1$ or $R_2$, and do the same for each vertex in $R_2$.

\hspace*{0.25cm} Finally, we add any additional edges to $G$ while maintaining a minimum degree of~$2$ (recall that the vertex $x$ has degree~$2$ in $G$) and not creating any full vertex. The resulting graph $G$ satisfies $\delta(G) = 2$ and has no full vertex.  Let ${\cal F}_2^3$ be the family consisting of all such graphs $G$ constructed in this way. Examples of graphs in the family ${\cal F}_2^3$ are illustrated in Figure~\ref{figfamily}. } \2

\end{enumerate}
{\rm The family ${\cal F}_2$ is defined as ${\cal F}_2={\cal F}_2^1\cup {\cal F}_2^2\cup {\cal F}_2^3$. }
\end{definition}

\begin{figure}[ht]
\begin{center}
{\subfloat[]{\includegraphics[width = 0.45\textwidth]{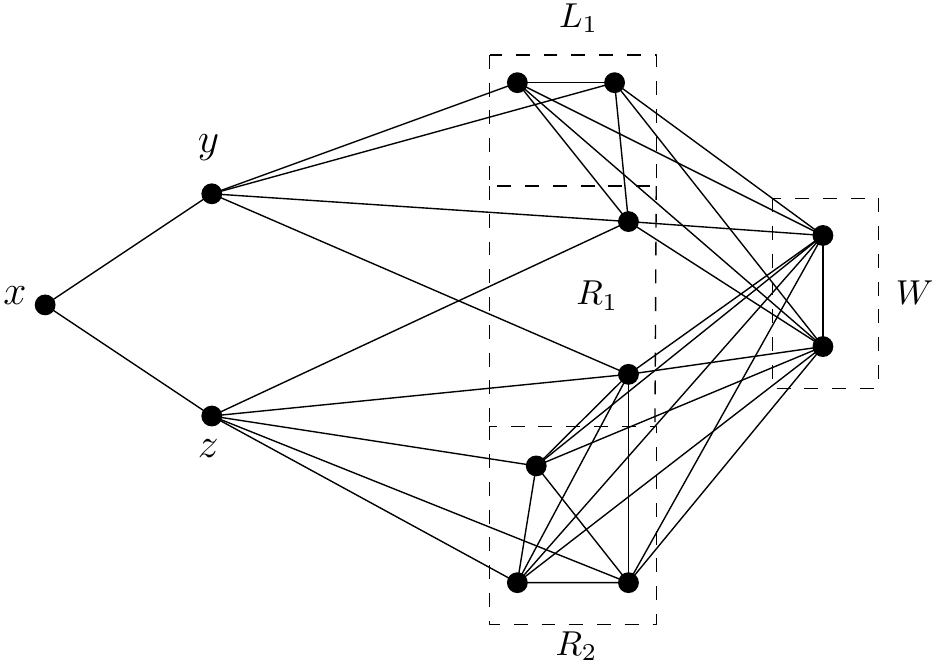}} }
{\subfloat[]{\includegraphics[width =0.45\textwidth]{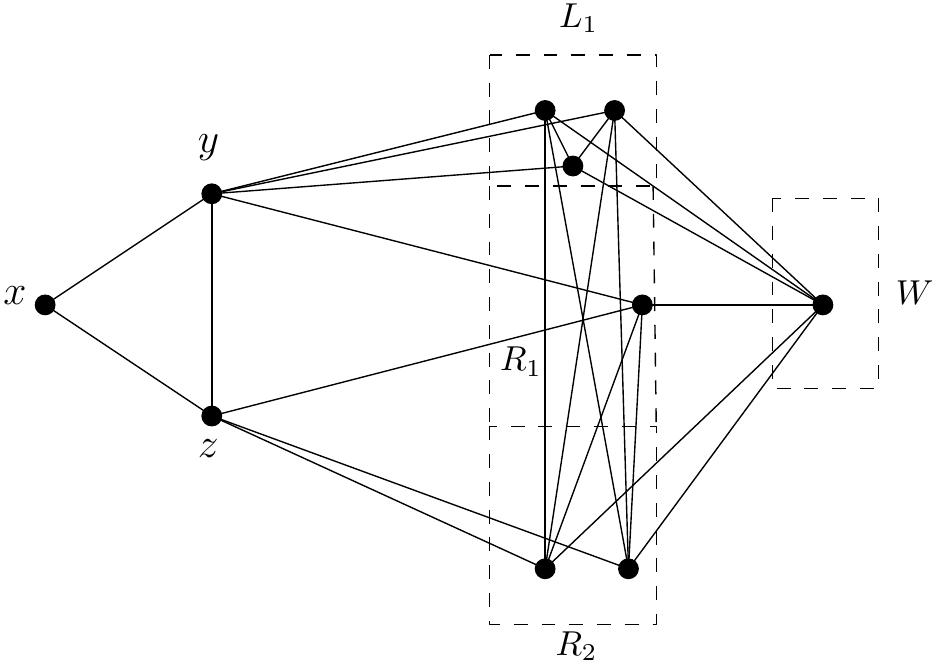}}}
\caption{Two different graphs in ${\cal F}_2^3$. (a): $(y,z)\not \in E$. (b): $(y,z) \in E$. } \label{figfamily}
\end{center}
\end{figure}

We determine next the SP-cycles with no full vertex. The coalition number of a cycle is determined in~\cite{coal0}. In particular, Haynes et al.~\cite{coal0} showed that if $G$ is a cycle $C_n$, then $C(C_n) = n$ if and only if $3 \le n \le 6$. Since $C_3$ has a full vertex, we consider here cycles $C_n$ for $4 \le n \le 6$. If $G$ is the $4$-cycle $xyazx$, then taking $R_1=\{a\}$, we infer that $G \in {\cal F}_2^1$. If $G$ is the $5$-cycle $xyabzx$, then taking $W =\{a,b\}$,  $L_1=\{a\}$,  $R_2=\{b\}$, and $R_1 = L_2=\emptyset$, we infer that $G \in {\cal F}_2^3$. If $G$ is the $6$-cycle $xyabczx$, then taking $W =\{b\}$,  $L_1=\{a\}$,  $R_2=\{c\}$, and $R_1 = L_2=\emptyset$, we infer that $G \in {\cal F}_2^3$. We state these observations formally as follows.

\begin{observation}
\label{ob:ob2}
For $n \ge 3$, if $C_n$ is a \SP, then $4 \le n \le 6$ and $G \in {\cal F}_2$.
\end{observation}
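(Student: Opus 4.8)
The plan is to reduce the statement to the known characterization of cycles with $C(C_n)=n$ and then establish membership in $\mathcal{F}_2$ directly. The starting point is the equivalence, valid for any graph $G$ of order $n$, that $G$ is a \SP if and only if $C(G)=n$: the $\Gamma_1$-partition has exactly $n$ members, so if it is a valid coalition partition then $C(G)\ge n$, while trivially $C(G)\le n$ since a partition of an $n$-set has at most $n$ blocks; conversely a coalition partition of size $n$ can only consist of singletons and is therefore a $\Gamma_1$-partition. Hence $C_n$ is a \SP precisely when $C(C_n)=n$, and by the result of Haynes et al.~\cite{coal0} this holds exactly for $3\le n\le 6$.

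Next I would dispose of $C_3$ and confirm the absence of full vertices in the remaining cases by a degree count. Every vertex of $C_n$ has degree $2$, so a full vertex (of degree $n-1$) can occur only when $n=3$; thus $C_3\cong K_3$ is complete with every vertex full, whereas each of $C_4,C_5,C_6$ has no full vertex. Since every subfamily in Definition~\ref{def-f2} is explicitly required to contain no full vertex, $C_3\notin\mathcal{F}_2$, and $C_3$ also falls outside the ``no full vertex'' regime under consideration in this section; this leaves exactly the three cases $n\in\{4,5,6\}$, each with $\delta(C_n)=2$.

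The substance of the argument is then to verify, for each of $C_4,C_5,C_6$, that a suitable partition of the vertex set into the blocks $\{x,y,z\}$, $L_1$, $R_1$, $R_2$, $L_2$, $W$ meets every edge condition of Definition~\ref{def-f2}. Fixing a vertex $x$ and letting $y,z$ be its two neighbors, I would use the assignments already indicated: for $C_4=xyazx$ take $R_1=\{a\}$ with all other blocks empty and check the defining adjacencies of $\mathcal{F}_2^1$; for $C_5=xyabzx$ take $W=\{a,b\}$, $L_1=\{a\}$, $R_2=\{b\}$, $R_1=L_2=\emptyset$; and for $C_6=xyabczx$ take $W=\{b\}$, $L_1=\{a\}$, $R_2=\{c\}$, $R_1=L_2=\emptyset$, in the latter two cases checking the conditions of $\mathcal{F}_2^3$. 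Each check reduces to confirming that $N_G(x)=\{y,z\}$, that $y$ is joined to $L_1\cup R_1$ but not to $R_2$, that $z$ is joined to $R_1\cup R_2$ but not to $L_1$, that $G[W]$ and the singleton sets $G[L_1],G[R_2]$ are cliques, that $yz\notin E$ (the branch of $\mathcal{F}_2^3$ with the edge $yz$ absent), and that $\delta(G)=2$ with no full vertex.

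I expect the only mildly delicate point to be this last, case-by-case verification of the structural constraints of $\mathcal{F}_2^3$ for $C_5$ and $C_6$; everything else follows immediately from the equivalence of the first paragraph and the degree argument of the second. Because each of the three cycles is small and the chosen blocks are singletons or empty, the numerous clique and adjacency requirements collapse to trivial checks, so no genuine obstacle arises and the remaining work is pure bookkeeping against Definition~\ref{def-f2}.
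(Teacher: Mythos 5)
Your proposal is correct and follows essentially the same route as the paper: invoke the result of Haynes et al.\ that $C(C_n)=n$ exactly for $3\le n\le 6$, set aside $C_3$ via its full vertices, and verify membership in ${\cal F}_2$ for $C_4$, $C_5$, $C_6$ using precisely the block assignments the paper indicates. The only additions are your explicit justification that being a \SP{} is equivalent to $C(G)=n$ and the degree count ruling out full vertices for $n\ge 4$, both of which the paper leaves implicit.
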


We show next that if $G$ is a \SP with $\delta(G)=2$ that contains no full vertex, then $G$ belongs to the family~${\cal F}_2$.

\begin{theorem}
\label{thmf2}
If $G$ is a \SP with $\delta(G)=2$ and with no full vertex, then $G \in {\cal F}_2$.
\end{theorem}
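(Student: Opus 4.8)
The plan is to fix a vertex $x$ of degree~$2$, write $N_G(x)=\{y,z\}$, and set $A=V\setminus\{x,y,z\}$. The engine of the whole argument is the following consequence of Observation~\ref{ob:ob1}: since $G$ has no full vertex and is a \SP, every vertex has a coalition partner, and for $v\in A$ any coalition partner $b$ of $v$ must dominate $x$; as $x\notin N[v]$, this forces $b\in N[x]=\{x,y,z\}$. Hence \emph{every} $v\in A$ satisfies that at least one of $\{v,x\},\{v,y\},\{v,z\}$ is a dominating set, where $\{v,x\}$ dominates iff $A\subseteq N[v]$, while $\{v,y\}$ (resp. $\{v,z\}$) dominates iff $V\setminus N[y]\subseteq N[v]$ (resp. $V\setminus N[z]\subseteq N[v]$). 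I would then partition $A$ by adjacency to $\{y,z\}$ into $A_{yz},A_y,A_z,A_\emptyset$, and set $W=\{v\in A: A\subseteq N[v]\}$, the coalition partners of $x$ lying in $A$. Two immediate structural facts are that $W$ induces a clique and that each vertex of $W$ is adjacent to all of $A$ except itself.

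Next I would split into three cases according to how many of $y,z$ dominate $A$, which is governed by the coalition partner of $x$ itself (this partner must dominate $A$). If both $y$ and $z$ dominate $A$, then $y\not\sim z$ (else one of them is full), every vertex of $A$ is adjacent to both, and one reads off $G\in\mathcal{F}_2^1$ with $R_1=A$. If exactly one, say $y$, dominates $A$, then $A_z=A_\emptyset=\emptyset$ and $y\not\sim z$; putting $L_1=A_y$ and $R_1=A_{yz}$, the key point is that each $v\in L_1$ cannot use $y$ as a partner (it is not adjacent to $z$), so it must dominate $A$ or dominate $V\setminus N[z]\supseteq L_1$, and in either case $v$ dominates $L_1$ — forcing $G[L_1]$ to be a clique and giving $G\in\mathcal{F}_2^2$. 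If neither dominates $A$, then the partner of $x$ lies in $A$, so $W\neq\emptyset$, and I aim for $G\in\mathcal{F}_2^3$ with $L_1=A_y$, $R_1=A_{yz}$, $R_2=A_z$, $L_2=A_\emptyset$, and $W$ as above. Applying the trichotomy $\{v,x\}/\{v,y\}/\{v,z\}$ to the vertices of each part, together with the two possibilities $yz\in E$ and $yz\notin E$ (which change $V\setminus N[y]$ and $V\setminus N[z]$), should produce exactly the required conditions: the ``adjacent to all of $L_1$ or all of $R_2$'' conditions on $R_1$, the clique conditions on $L_1,R_2$ when $yz\notin E$ together with their relaxations when $yz\in E$, and the containment $A_\emptyset\subseteq W$.

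The hard part will be the case $G\in\mathcal{F}_2^3$, for two reasons. First, one must show that $A_\emptyset\subseteq W$ even when $yz\in E$, where a priori a vertex of $A_\emptyset$ could partner with $y$ or $z$ without dominating $A$; I expect this to follow by playing the partner conditions of the \emph{neighbouring} vertices in $A_y\cup A_z$ against those of the $A_\emptyset$-vertex, forcing the missing adjacencies. Second, and more seriously, the representation requires $L_1,R_2,W$ to be non-empty, and this can fail for a poor choice of the base vertex $x$: if $A$ happens to be a clique with neither $y$ nor $z$ dominating it, or if one of $y,z$ is a degree-$2$ vertex with the same closed neighbourhood as $x$, then $L_1$ or $R_2$ collapses and $G$ is not expressible in $\mathcal{F}_2^3$ \emph{with this $x$}, even though it does lie in $\mathcal{F}_2$ for a different choice. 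So the real obstacle is to select $x$ among the degree-$2$ vertices so that the induced partition is non-degenerate, and to argue that such a choice always exists — resolving the residual degenerate configurations by re-selecting $x$ and thereby landing in $\mathcal{F}_2^1$ or $\mathcal{F}_2^2$ instead. Checking that no full vertex is created and that $\delta(G)=2$ is maintained is routine and is inherited from the hypotheses on $G$.
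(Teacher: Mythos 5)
Your overall strategy is the paper's: anchor at a degree-$2$ vertex $x$ with $N_G(x)=\{y,z\}$, use Observation~\ref{ob:ob1} to conclude that every singleton in $V_x=V\setminus\{x,y,z\}$ must partner with one of $\{x\},\{y\},\{z\}$, partition $V_x$ by adjacency to $\{y,z\}$ into what the paper calls $L_1,R_1,R_2,L_2$, let $W$ be the set of partners of $\{x\}$ in $V_x$, and split on whether $\{x\}$ partners with both, exactly one, or neither of $\{y\},\{z\}$ (your ``how many of $y,z$ dominate $A$'' is exactly this trichotomy). Your handling of the first two cases, the clique structure of $W$ and of $L_1$, and your sketch for $A_\emptyset\subseteq W$ (playing the partner conditions of the other vertices of $V_x$ against an alleged vertex of $L_2\setminus W$) all match Claims~\ref{c:claim1}--\ref{c:claim3} of the paper.

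The one genuine divergence is your proposed fix for what you call the hard part, and it is a gap. You worry that $L_1$ or $R_2$ may be empty and propose to repair this by re-selecting the base vertex $x$ so as to land in $\mathcal{F}_2^1$ or $\mathcal{F}_2^2$ instead; but which of the three cases applies is dictated by whether $\{x,y\}$ and $\{x,z\}$ are dominating sets, not by a choice you control, so re-selection cannot turn a Case-3 configuration into Case~1 or~2 (and a graph may have only one vertex of degree~$2$, or all of its degree-$2$ vertices may induce equally degenerate partitions). The paper's resolution is more direct and does not involve re-selection: since $\{x,y\}$ is not a dominating set, some vertex of $V_x$ lies outside $N[y]$, so $R_2\cup L_2\ne\emptyset$, and symmetrically $L_1\cup L_2\ne\emptyset$; in the subcase where $\{y\}$ and $\{z\}$ form a coalition one has $L_2=\emptyset$, and these two facts immediately give $L_1\ne\emptyset$ and $R_2\ne\emptyset$. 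In the remaining subcase $L_2\ne\emptyset$ the paper first proves $L_2\subseteq W$ and then repeats the same analysis. If you replace your re-selection idea with this observation your plan closes; as written, that step would not go through.
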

\begin{proof}
Let $G = (V,E)$ be a \SP with $\delta(G)=2$ and with no full vertex, and let $G$ have order~$n$, and so $C(G)=n$. Let $x$ be a vertex of degree~$2$ in $G$ with neighbors $y$ and $z$, and so $N_G[x] = \{x,y,z\}$. Further, let ${\cal P}$ be a singleton coalition partition of $G$. By Observation~\ref{ob:ob1}, all sets in ${\cal P} \backslash\{\{x\},\{y\},\{z\}\}$ form a coalition with at least one of $\{x\},\{y\}$, or $\{z\}$, while no two sets in ${\cal P}\backslash\{\{x\},\{y\},\{z\}\}$ form a coalition. In what follows, we let $V_x = V \setminus \{x,y,z \}$, and so $V = N_G[x] \cup V_x$. We proceed further with a series of claims.

\begin{claim}
\label{c:claim1}
If $\{x\}$ forms a coalition with both $\{y\}$ and $\{z\}$, then $G \in {\cal F}_2$.
\end{claim}
\begin{proof}
Suppose that $\{x\}$ forms a coalition with both $\{y\}$ and $\{z\}$. Thus, both $\{x,y\}$ and $\{x,z\}$ are dominating sets in $G$. Hence each of $y$ and $z$ is adjacent to all vertices in $V_x$. Since $G$ has no full vertex, it follows that $yz \notin E$. For each vertex $u\in V_x$, the sets $\{u,y\}$ and $\{u,z\}$ are dominating sets in $G$, and so $\{u\}$ forms a coalition with both $\{y\}$ and $\{z\}$. Thus, letting $R_1 = V \setminus \{x,y,z\}$ and $W =L_1=L_2=R_2=\emptyset$, we infer that $G\in {\cal F}_2^1$ (see Definition \ref{def-f2}). Therefore, $G \in {\cal F}_2$.~\smallqed
\end{proof}

\begin{claim}
\label{c:claim2}
If $\{x\}$ forms a coalition with exactly one of $\{y\}$ and $\{z\}$, then $G \in {\cal F}_2$.
\end{claim}
\begin{proof}
Suppose that $\{x\}$ forms a coalition with exactly one of $\{y\}$ and $\{z\}$. Renaming the vertices $y$ and $z$ if necessary, we may assume that $\{x\}$ forms a coalition $\{y\}$ but not with $\{z\}$. Thus, $\{x,y\}$ is a dominating sets in $G$, implying that vertex $y$ is adjacent to all vertices in $V_x$. Since $G$ has no full vertex, it follows that $yz \notin E$. Since $\{x,z\}$ is not a dominating set, there exists a vertex in $V_x$ that is not adjacent to $z$. Let $V_x = L_1\cup R_1$, where $L_1$ and $R_1$ are the sets of vertices in $V_x$ that are not adjacent to $z$ and are adjacent to $z$, respectively. Necessarily, $L_1$ is non-empty since $\{x,z\}$ is not a dominating set. Let $u \in V_x$. If $u \in R_1$, then since $uz \in E$ and all vertices in $V_x$ are adjacent to $y$, the sets $\{u\}$ and $\{y\}$ form a coalition. Now, suppose $u \in L_1$. Since $z$ is not adjacent to both $u$ and $y$, the sets $\{u\}$ and $\{y\}$ do not form a coalition. Hence, $\{u\}$ must form a coalition with either $\{z\}$ or $\{x\}$. If $\{u\}$ forms a coalition with $\{x\}$, then all vertices in $V_x$ are adjacent to $u$. If $\{u\}$ forms a coalition with $\{z\}$, then all vertices in $L_1$ are adjacent to $u$. In both cases, $G[L_1]$ is a clique. By letting $W=L_2=R_2=\emptyset$, we infer that $G \in {\cal F}_2^2$ (see Definition \ref{def-f2}).  Therefore, $G\in{\cal F}_2$.~\smallqed
\end{proof}

\begin{claim}
\label{c:claim3}
If $\{x\}$ forms a coalition with neither $\{y\}$ nor $\{z\}$, then $G \in {\cal F}_2$.
\end{claim}
\begin{proof}
Suppose that $\{x\}$ forms a coalition with neither $\{y\}$ nor $\{z\}$. Since $\cal P$ is a singleton coalition partition of $G$, there exists a vertex $w\in V_x$ such that $\{x\}$ and $\{w\}$ form a coalition. Let $W$ be the set of all vertices $w \in V_x$ such that $\{w\}$ and $\{x\}$ form a coalition. Thus, $\{x,w\}$ is a dominating set of $G$, implying that vertex $w$ is adjacent to all vertices in $V_x \setminus \{w\}$. Hence, $G[W]$ is a clique. Let $L_1$ be the set of all vertices in $V_x$ which are adjacent to $y$ and not are adjacent to~$z$. Further let $R_1$ be the set of all vertices in $V_x$ which are adjacent to both $y$ and $z$, and let $R_2$ be the set of all vertices in  $V_x$ which are adjacent to $z$ and not are adjacent to $y$. Let $L_2$ be the set of all vertices in $V_x$ which are adjacent to neither $y$ nor $z$ (see Figure \ref{ethmpic}). We note that $L_1,R_1,R_2$, and $L_2$ are pairwise disjoint. Since $W \subseteq V_x = L_1\cup R_1\cup R_2\cup L_2$, we note that $W \cap \left(L_1\cup R_1\cup R_2\cup L_2\right) \ne \emptyset$. Since $\{x,y\}$ is not a dominating set, $R_2\cup L_2\ne \emptyset$, and since $\{x,z\}$ is not a dominating set, $L_1\cup L_2 \ne \emptyset$. We proceed further with two subclaims.

\begin{figure}[htb]
\begin{center}
	\includegraphics[width=0.3\linewidth]{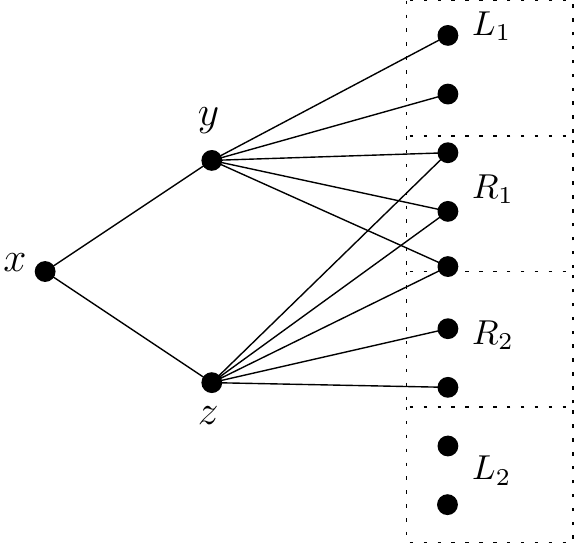}
	\caption{Illustrating the proof of Claim~\ref{c:claim3}.
}
  \label{ethmpic}
	\end{center}
\end{figure}

\begin{subclaim}
\label{c:claim3.1}
If $\{y\}$ and $\{z\}$ form a coalition, then $G \in {\cal F}_2$.
\end{subclaim}
\begin{proof}
Suppose that $\{y\}$ and $\{z\}$ form a coalition. Thus, $\{y,z\}$ is a dominating set, and so in this case we note that $L_2=\emptyset$.

Suppose that $yz \notin E$. We show firstly that $G[R_2]$ is a clique. Let $r_2 \in R_2$. Since $y$ is adjacent to neither $r_2$ nor $z$, the sets $\{r_2\}$  and $\{z\}$ do not form a coalition. Thus, $\{r_2\}$ forms a coalition with $\{y\}$ or $\{x\}$. If $\{r_2\}$ forms a coalition with $\{x\}$, then $r_2 \in W$. By our earlier observations, $G[W]$ is a clique, implying that $r_2$ is adjacent to all vertices in $R_2 \setminus \{r_2\}$. If $\{r_2\}$ forms a coalition with $\{y\}$, then $\{r_2,y\}$ is a dominating set of $G$, once again implying that $r_2$ is adjacent to all vertices in $R_2 \setminus \{r_2\}$. Hence, $G[R_2]$ is a clique. We show secondly that every vertex in $R_1$ is adjacent to all vertices of $L_1$ or $R_2$ (or both $L_1$ and $R_2$). Let $r_1 \in R_1$. If $\{r_1\}$ forms a coalition with $\{x\}$, then $r_1 \in W$. If $\{r_1\}$ forms a coalition with $\{y\}$, then every vertex of $R_2$ must be adjacent to $r_1$. If $\{r_1\}$ forms a coalition with $\{z\}$, then every vertex of $L_1$ must be adjacent to $r_1$. Hence, $r_1$ is adjacent to all vertices of either $L_1$  or $R_2$. We show thirdly that $G[L_1]$ is a clique. Let $l_1\in L_1$. If $\{l_1\}$ forms a coalition with $\{x\}$, then $l_1 \in W$. Now, since $yz  \notin E$ and $z$ is not adjacent to $l_1$, $\{l_1\}$ does not form a coalition with $\{y\}$. If $\{l_1\}$ forms a coalition with $\{z\}$, then $l_1$ must be adjacent to all other vertices of $L_1$. Hence, $G[L_1]$ is a clique. From the above structural properties we infer that $G \in {\cal F}_2^3$ (see Definition \ref{def-f2}). Therefore, in this case when $yz \notin E$, we have $G\in {\cal F}_2$.

Hence we may assume that $yz \in E$, for otherwise the desired result follows. We show that every vertex of $R_2$ is adjacent to all vertices of $L_1$ or $R_2\setminus \{r_2\}$. Let $r_2\in R_2$. If $\{r_2\}$ forms a coalition with $\{x\}$, then $r_2 \in W$. If $\{r_2\}$ forms a coalition with $\{z\}$, then $r_2$ is adjacent to all vertices in $L_1$. If $\{r_2\}$ forms a coalition with $\{y\}$, then $r_2$ must be adjacent to all other vertices of $R_2$. Hence, we conclude that every vertex of $R_2$ is adjacent to all vertices of $L_1$ or $R_2\setminus \{r_2\}$. We show next that every vertex of $R_1$ is adjacent to all vertices of $L_1$ or $R_2$. Let $r_1 \in R_1$. If $\{r_1\}$ forms a coalition with $\{x\}$, then $r_1 \in W$. If $\{r_1\}$ forms a coalition with $\{z\}$, then all vertices of $L_1$ must be adjacent to $r_1$, and if $\{r_1\}$ forms a coalition with $\{y\}$, then  all vertices of $R_2$ must be adjacent to $r_1$. Hence, we conclude that every vertex of $R_1$ is adjacent to all vertices of $L_1$ or $R_2$. Finally, we show that every vertex of $L_1$ is adjacent to all vertices of $R_2$ or $L_1\setminus \{l_1\}$. Let $l_1\in L_1$. If $\{l_1\}$ forms a coalition with $\{x\}$, then $l_1 \in W$. If $\{l_1\}$ forms a coalition with $\{y\}$, then all vertices of $R_2$ must be adjacent to $l_1$, and if $\{l_1\}$ forms a coalition with $\{z\}$, then $l_1$ must be adjacent to all vertices $L_1\setminus \{l_1\}$. Hence, we conclude that every vertex of $L_1$ is adjacent to all vertices of $R_2$ or $L_1\setminus \{l_1\}$. From the above structural properties we infer that $G \in {\cal F}_2^3$ (see Definition \ref{def-f2}). Therefore, once again we have $G\in {\cal F}_2$.~\smallqed
\end{proof}

\begin{subclaim}
\label{c:claim3.2}
If $\{y\}$ and $\{z\}$ do not form a coalition, then $G \in {\cal F}_2$.
\end{subclaim}
\begin{proof}
Suppose that $\{y\}$ and $\{z\}$ do not form a coalition, implying that $L_2 \ne \emptyset$. We show that $L_2\subseteq W$. Suppose, to the contrary, that $L_2 \setminus W \ne \emptyset$. Let $l_2 \in L_2\setminus W$. Thus, $\{x\}$ and $\{l_2\}$ do not form a coalition, and so $\{l_2\}$ forms a coalition with $\{y\}$ or $\{z\}$. In both cases, since no vertex in $L_2$ is adjacent to $y$ or $z$, the vertex $l_2$ is adjacent to every other vertex in $L_2$. If $yz \notin E$, then since neither $y$ nor $z$ is adjacent to $l_2$, the set $\{l_2\}$ does not form a coalition with $\{y\}$ or with $\{z\}$, a contradiction. Hence, $yz \in E$. Let $q \in L_1\cup R_2\cup R_2$. If $\{q\}$ forms a coalition with $\{x\}$, then $q \in W$, and so $q$ is adjacent to $l_2$. If $\{q\}$ forms a coalition with $\{y\}$ or $\{z\}$, then since $l_2$ is adjacent to neither $y$ nor $z$, once again we infer that $l_2$ is adjacent to $q$. Hence, we conclude that $l_2$ adjacent to every vertex of $L_1 \cup R_1 \cup R_2$. As observed earlier, $l_2$ adjacent to every other vertex of $L_2$. Thus, $l_2$ adjacent to every other vertex of $V_x$, implying that $\{x\}$ and $\{l_2\}$ do form a coalition, and so $l_2 \in W$, a contradiction. Hence, $L_2 \subseteq W$. Proceeding analogously as in to the proof of Claim~\ref{c:claim3.1} when the set $L_2 = \emptyset$, we infer that $G \in {\cal F}_2^3$ (see Definition \ref{def-f2}), and so $G\in {\cal F}_2$.~\smallqed
\end{proof}

The proof of Claim~\ref{c:claim3} follows from Claims~\ref{c:claim3.2} and~\ref{c:claim3.2}.~\smallqed
\end{proof}

The proof of Theorem~\ref{thmf2} follows from Claims~\ref{c:claim1},~\ref{c:claim2} and~\ref{c:claim3}.~\QED
\end{proof}

\medskip
We next consider graphs $G$ with $\delta(G)=2$ that contain a full vertex.

\begin{theorem}
\label{thmfulldelta2}
If $G = (V,E)$ is a graph of order~$n$ with $\delta(G)=2$, then the following properties hold. \\[-24pt]
\begin{enumerate}
\item[{\rm (a)}] If $G$ contains exactly one full vertex, say $f$, then $C(G)=n$ if and only if $G[V \setminus\{f\}] \in {\cal F}_1$.
\item[{\rm (b)}] If $G$ contains exactly two full vertices, then $C(G)=n$ if and only if $G\cong (K_1\cup K_{n-3})+K_2$.
\item[{\rm (c)}] If $G$ contains at least three full vertices, then $G \cong C_3$.
\end{enumerate}
\end{theorem}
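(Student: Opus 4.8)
The plan is to isolate a single reduction principle about full vertices and then apply it once for part~(a), twice for part~(b), while part~(c) reduces to an elementary degree count. The central step, which I expect to be the main obstacle because of the careful bookkeeping about which singletons are dominating versus coalition-forming, is the following reduction: \emph{if $f$ is a full vertex of a graph $G$ of order $n \ge 2$, then $C(G) = n$ if and only if $C(G - f) = n - 1$.} Three facts drive this. (i) Since $f$ is full, $\{f\}$ is itself a dominating singleton, so it is automatically a legal block of any $\Gamma_1$-partition and can never be a coalition partner. (ii) For a vertex $v \ne f$, $v$ is full in $G$ if and only if $v$ is full in $G - f$; indeed removing $f$ lowers every degree by exactly one, and $v$ is adjacent to the full vertex $f$. (iii) Two singletons $\{u\},\{v\}$ with $u, v \ne f$ form a coalition in $G$ if and only if they form one in $G - f$: the forward direction holds because $G - f$ is an induced subgraph and $\{u,v\}$ still dominates all of $V \setminus \{f\}$, the reverse because any pair dominating $G - f$ also covers $f$ (both $u$ and $v$ are adjacent to $f$), while in both graphs neither $\{u\}$ nor $\{v\}$ is dominating by~(ii). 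Combining these, a $\Gamma_1$-partition of $G$ restricts to one of $G - f$ and conversely lifts, proving the lemma.

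For part~(a), let $f$ be the unique full vertex; then $n \ge 4$ (a graph of order~$3$ with $\delta = 2$ is $K_3$, which has three full vertices), so $\deg_G(f) = n - 1 > 2$ and the degree-$2$ vertex $x$ satisfies $x \ne f$. Hence $G - f$ has no full vertex, and since every $v \ne f$ has $\deg_{G-f}(v) = \deg_G(v) - 1 \ge 1$ with $x$ attaining degree~$1$, we get $\delta(G - f) = 1$. The Reduction Lemma gives $C(G) = n$ if and only if $G - f$ is an SP-graph, and Theorem~\ref{thmdelta1nf} then yields $C(G) = n$ if and only if $G[V \setminus \{f\}] = G - f \in {\cal F}_1$.

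For part~(b), let $f_1, f_2$ be the two full vertices. Applying the Reduction Lemma first to $f_1$ and then to $f_2$ (which stays full in $G - f_1$) gives $C(G) = n$ if and only if $C(G^*) = n - 2$, where $G^* := G - \{f_1, f_2\}$. The key structural observation is that a degree-$2$ vertex $x$ has both its neighbors equal to $f_1$ and $f_2$ (both are full and hence adjacent to $x$, and $x$ has no third neighbor); therefore $x$ is isolated in $G^*$, so $\delta(G^*) = 0$ and Theorem~\ref{thmdelta0} gives $C(G^*) = n - 2$ if and only if $G^* \cong K_1 \cup K_{n-3}$. Since $f_1, f_2$ are full and mutually adjacent, reattaching them presents $G$ as the join $(K_1 \cup K_{n-3}) + K_2$; a short check that this join has exactly two full vertices closes the equivalence.

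For part~(c), no reduction is needed. Every non-full vertex is adjacent to all of the at least three full vertices and thus has degree at least~$3$; since $\delta(G) = 2$ there is a vertex of degree~$2$, which must therefore be full, and a full vertex of degree~$2$ forces $n - 1 = 2$. Hence $n = 3$, all three vertices are full, and $G \cong K_3 = C_3$.
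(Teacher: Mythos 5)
Your proof is correct and follows essentially the same route as the paper: delete the full vertices and invoke Theorems~\ref{thmdelta0} and~\ref{thmdelta1nf} for the resulting graphs of minimum degree $0$ and $1$, with part~(c) handled by a degree count. The only difference is that you isolate and actually prove the reduction $C(G)=n \Leftrightarrow C(G-f)=n-1$ for a full vertex $f$, which the paper merely asserts, so your write-up is if anything slightly more complete.
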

\begin{proof}
(a) Suppose that $G$ contains exactly one full vertex, say $f$. If $C(G) = n$, then since $f$ is a full vertex,  $C(G[V\setminus\{f\}])=n-1$.  Since $G$ has exactly one full vertex, $G[V\setminus\{f\}]$ has no full vertices. Since the minimum degree of $G[V\setminus\{f\}]$ is~$1$, by Theorem~\ref{thmdelta1nf} the graph $G[V\setminus\{f\}]$ belongs to the family~${\cal F}_1$. Conversely, if $G[V\setminus\{f\}] \in {\cal F}_1$, then $G$ has no full vertices and $C(G[V\setminus\{f\}])=n-1$. Adding $\{f\}$ to the singleton  coalition partition of  $G[V\setminus\{f\}]$ yields a singleton coalition  partition for $G$, whence $C(G)=n$.

(b) Suppose that $G$ contains exactly two full vertices, say $f_1$ and $f_2$. Let $G_{1,2} = G[V\setminus\{f_1,f_2\}]$. Since $\delta(G)=2$, the graph $G_{1,2}$ contains an isolated vertex, and so $G_{1,2}$ has minimum degree zero. If $C(G)=n$, then $C(G_{1,2})=n-2$, and so, by Theorem \ref{thmdelta0}, we have $G_{1,2} \cong K_1 \cup K_{n-3}$. Reconstructing the graph $G$ by adding back the full vertices $f_1$ and $f_2$ we have $G=(K_1\cup K_{n-3}) + K_2$. Conversely, if $G=(K_1\cup K_{n-3})+K_2$, then $C(G)=n$.

(c) Suppose that $G$ contains at least three full vertices, say $f_1$, $f_2$ and $f_3$. If $n \ge 4$, then $\delta(G) \ge 3$, contradicting our supposition that $\delta(G)=2$. Hence, $n = 3$, whence $G = C_3$.~\QED
\end{proof}

\medskip
Before we prove that if  $G\in {\cal F}_2$, then  $C(G)=n$, we will describe a family ${\cal H}_2$ of graphs. Thereafter, we establish a connection between $\CG(G, \Gamma_1)$ and the family ${\cal H}_2$, where $G \in {\cal F}_2$.

\begin{definition}[Family ${\cal H}_2$]
\label{def-h2}
The family  ${\cal H}_2$ consists of all graphs $H$ which are constructed as follows.

\begin{enumerate}
\item {\textbf{The family} $\mathbf{{\cal H}_2^1}$.}
{\rm
Let $H=(V,E)$ be the graph constructed as follows. Let $V = \{x',y',z'\} \cup R_1'$, with $R_1' \ne \emptyset$. Add all edges between vertices in $\{x',y',z'\}$ to form a clique, and so $H[\{x',y',z'\}]=K_3$. Join every  vertex of $R_1'$ to both $y'$ and $z'$. Let $R_1'$ be an independent set in $H$. Further, we add edges joining the vertex $x'$ to vertices in $R_1'$, including the possibility of adding no additional edge incident to $x'$ (except for the edges $x'y'$ and $x'z'$).  Let ${\cal H}_2^1$ be the family consisting of all such graphs $H$ constructed in this way. An example of a graph in the family ${\cal H}_2^1$ is illustrated in Figure~\ref{h2familyc1}.
}
\2

\item {\textbf{The family} $\mathbf{{\cal H}_2^2}$.}
{\rm
Let $H=(V,E)$ be the graph constructed as follows. Let $V=\{x',y',z'\} \cup R_1'\cup L_1'$, with $R_1'\ne \emptyset$ and $L_1' \ne \emptyset$. Join the vertex $y'$ to every vertex of $R_1'$, and join the vertex $z'$ to every vertex of $L_1'$.   We join the vertex $y'$ to both vertices $x'$ and $z'$. However, we do not add an edge between $x'$ and $z'$, and we add no edges between any vertex of $L_1'$ and $y'$. Let $L_1'\cup R_1'$ be an independent set of $H$. Additional edges may be added to $H$, including the possibility of none. By construction, every vertex in $L_1'$ has degree~$1$ with $z'$ as its unique neighbor or has degree~$2$ with $x'$ and $z'$ as its two neighbors. Let ${\cal H}_2^2$ be the family consisting of all such graphs $H$ constructed in this way. An example of a graph in the family ${\cal H}_2^2$ is illustrated in Figure~\ref{h2familyc2}.
}
\2

\begin{figure}[htb]
\begin{center}
{\subfloat[ \label{h2familyc1}]{\includegraphics[width = 0.32\textwidth]{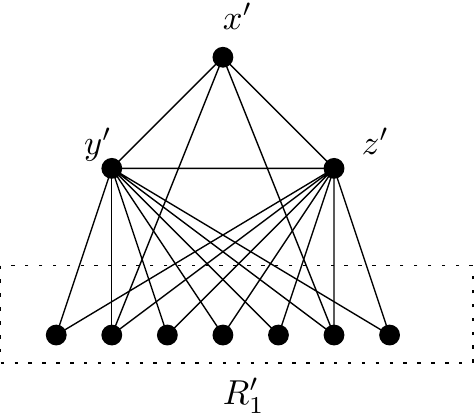}} }
\hspace{0.75cm}
{\subfloat[ \label{h2familyc2}]{\includegraphics[width =0.25\textwidth]{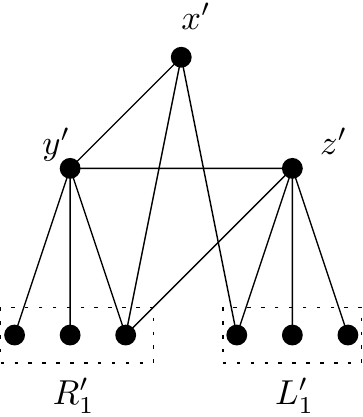}}}
\caption{(a): A graph in ${\cal H}_2^1$ (b): A graph in ${\cal H}_2^2$ }
\end{center}
\end{figure}

\item {\textbf{The family} $\mathbf{{\cal H}_2^3}$.}
{\rm
Let $H=(V,E)$ be the graph constructed as follows. Let $V = \{x',y',z'\} \cup L_1' \cup R_1' \cup R_2' \cup W'$, where $W'\ne \emptyset$ and where $L_1', R_1', R_2'$ and $W'$ are pairwise disjoint sets. Further some, including the possibility of all, of the sets $L_1'$, $R_1'$, and $R_2'$ may be empty. We add no edges between vertices in the set $L_1' \cup R_1' \cup R_2' \cup W'$, and so this set is an independent set in $H$. We do not add the edges $x'y'$ and $x'z'$. We join every vertex of $R_1'$ to at least one of the vertices $y'$ and $z'$. We join $x'$ to all vertices of $W'$, but we do not add any edge joining $x'$ to a vertex in the set $\{y',z'\} \cup L_1' \cup R_1'\cup R_2'$. We join each vertex of $L_1'$ and each vertex of $R_2'$ to at least one of the vertices $y'$ and~$z'$. Two examples of  graphs in the family ${\cal H}_{2}^3$ are illustrated in Figure \ref{h2familyc3}. 
%
%
%
}
\end{enumerate}
{\rm The family ${\cal H}_2$ is defined as ${\cal H}_2={\cal H}_2^1\cup {\cal H}_2^2\cup {\cal H}_2^3$. }
\end{definition}

\begin{figure}[ht]
\begin{center}
{\subfloat[ \label{h2familyc31}]{\includegraphics[width = 0.32\textwidth]{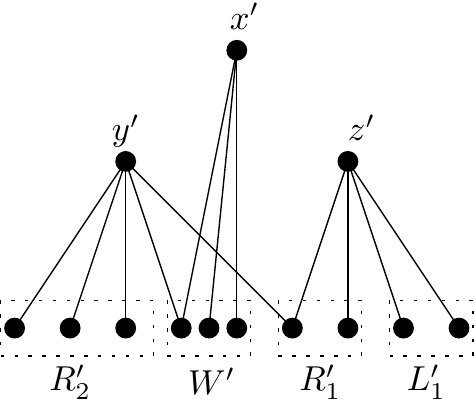}} }
\hspace{1cm}
{\subfloat[ \label{h2familyc32}]{\includegraphics[width =0.32\textwidth]{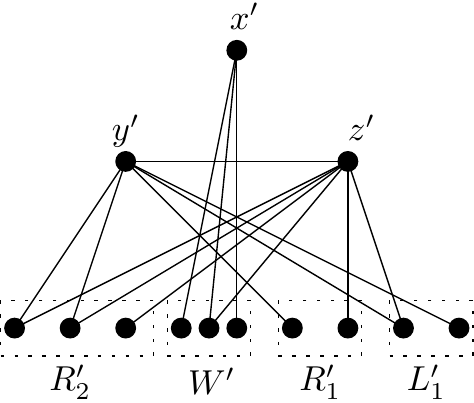}}}
\caption{ Two graphs in  ${\cal H}_{2}^3$.}
 \label{h2familyc3}
\end{center}
\end{figure}

We are now in a position to prove the following result.

\begin{theorem}
\label{lemh2f2}
If $G\in {\cal F}_2$, then $G$ is a \SP and $\CG(G,\Gamma_1) \in {\cal H}_2$.
\end{theorem}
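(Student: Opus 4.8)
The plan is to handle the three subfamilies $\mathcal{F}_2^1$, $\mathcal{F}_2^2$, $\mathcal{F}_2^3$ in turn, and in each case show both that the singleton partition $\Gamma_1$ is a valid coalition partition (so $G$ is a singleton-partition graph) and that $\CG(G,\Gamma_1)$ lands in $\mathcal{H}_2^1$, $\mathcal{H}_2^2$, $\mathcal{H}_2^3$ respectively. Two facts valid for every $G\in\mathcal{F}_2$ drive the whole argument. First, $G$ has no full vertex by construction, so by Observation~\ref{ob:ob1} two singletons $\{u\},\{v\}$ form a coalition if and only if $\{u,v\}$ dominates $G$. Second, since $N_G[x]=\{x,y,z\}$, every dominating pair must meet $\{x,y,z\}$. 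The second fact shows at once that no two vertices of $V_x=V\setminus\{x,y,z\}$ form a coalition, so $V_x$ induces an independent set in $\CG(G,\Gamma_1)$ and every edge of the coalition graph is incident with one of $\tilde x,\tilde y,\tilde z$. This reduces the entire problem to deciding, for each $v\in V_x$, whether $\{v,x\},\{v,y\},\{v,z\}$ dominate, together with the three pairs among $x,y,z$.

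The computation of these neighbourhoods is uniform. I would first record the key reduction: since $x$ already covers $\{x,y,z\}$, the pair $\{x,v\}$ dominates if and only if $v$ is adjacent to every vertex of $V_x\setminus\{v\}$; by construction these are exactly the vertices of $W$ (in the degenerate families $\mathcal{F}_2^1,\mathcal{F}_2^2$ the set $W$ is empty, but individual vertices of $R_1$ or $L_1$ may still satisfy this, which is precisely the ``arbitrary edges from $x'$'' freedom in $\mathcal{H}_2^1,\mathcal{H}_2^2$). The three core pairs are handled by inspection: in $\mathcal{F}_2^1$ all of $\{x,y\},\{x,z\},\{y,z\}$ dominate, giving the triangle of $\mathcal{H}_2^1$; in $\mathcal{F}_2^2$ the pair $\{x,z\}$ fails (it misses $L_1$) while $\{x,y\}$ and $\{y,z\}$ dominate, giving the path $x'\!-\!y'\!-\!z'$ of $\mathcal{H}_2^2$; and in $\mathcal{F}_2^3$ neither $\{x,y\}$ (misses $R_2$) nor $\{x,z\}$ (misses $L_1$) dominates, while $\{y,z\}$ dominates exactly when $L_2=\emptyset$. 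The remaining edges from $\tilde y$ and $\tilde z$ to $V_x$ I would obtain by running through the four classes $L_1,R_1,R_2,L_2$: a vertex adjacent to $y$ (resp.\ $z$) that covers whatever $y$ (resp.\ $z$) misses yields the corresponding coalition edge, and the defining adjacencies of each class translate directly into the ``joined to at least one of $y',z'$'' conditions of the $\mathcal{H}_2$ families.

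To finish the singleton-partition claim I would verify that every vertex lies in at least one coalition. The core vertices are covered since $x$ pairs with $y$ (in $\mathcal{F}_2^1,\mathcal{F}_2^2$) or with any $w\in W$ (in $\mathcal{F}_2^3$, where $W\neq\emptyset$), while $y$ and $z$ pair with each other when $L_2=\emptyset$ and otherwise with a vertex of $W$; each vertex of $V_x$ pairs with whichever of $x,y,z$ its class permits. The one genuinely delicate point is $\mathcal{F}_2^3$: the class $L_2$ consists of vertices adjacent to neither $y$ nor $z$, so such a vertex can be rescued only by $x$, and it is exactly the requirement $L_2\subseteq W$ that guarantees each $L_2$-vertex forms a coalition with $x$, thereby keeping $\Gamma_1$ a legitimate coalition partition.

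I expect the main obstacle to be the family $\mathcal{F}_2^3$ and its matching to $\mathcal{H}_2^3$. The difficulties are that $W$ may overlap the classes $L_1,R_1,R_2$, so a single vertex can be adjacent in the coalition graph to more than one of $\tilde x,\tilde y,\tilde z$; that the analysis splits according to whether the edge $yz$ is present; and that the ``additional edges'' clause of $\mathcal{F}_2^3$, which adds edges only inside $V_x$, can alter the membership of $W$ and the adjacencies of $\tilde y,\tilde z$ and so must be shown never to push $\CG(G,\Gamma_1)$ outside the allowed $\mathcal{H}_2^3$ pattern. I would mirror the bookkeeping of Subclaims~\ref{c:claim3.1} and~\ref{c:claim3.2} in the proof of Theorem~\ref{thmf2}, treating the orientations $yz\in E$ and $yz\notin E$ separately and confirming that the adjacencies of $\tilde x,\tilde y,\tilde z$ to the four classes satisfy every clause in the definition of $\mathcal{H}_2^3$, with the two resulting shapes corresponding to the two example graphs of Figure~\ref{h2familyc3}.
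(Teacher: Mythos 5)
Your plan follows essentially the same route as the paper's proof: a three-way case split over $\mathcal{F}_2^1$, $\mathcal{F}_2^2$, $\mathcal{F}_2^3$ (the paper's Claims~\ref{c:claimA}--\ref{c:claimC}), driven by Observation~\ref{ob:ob1} and the fact that $N_G[x]=\{x,y,z\}$ forces $V_x$ to be independent in $\CG(G,\Gamma_1)$; your analysis of the three core pairs and of the edges from $\tilde y,\tilde z$ into the classes $L_1,R_1,R_2,L_2$ matches the paper's bookkeeping.

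There is, however, one concrete step in your verification that $\Gamma_1$ is a coalition partition which fails as written: for $G\in\mathcal{F}_2^3$ you assert that when $L_2\ne\emptyset$ the vertices $y$ and $z$ each pair ``with a vertex of $W$.'' This is not guaranteed. If $yz\notin E$ and $w\in W$ is adjacent to neither $y$ nor $z$ (for instance $W=L_2$, or more generally $W\cap(R_1\cup R_2)=\emptyset$ for the vertex $y$), then $\{y,w\}$ fails to dominate $z$, since $x$ is the only other guaranteed neighbour of $z$ and $x\notin\{y,w\}\cup N(\{y,w\})$ is irrelevant --- the point is $z\notin N[y]\cup N[w]$. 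The conclusion survives, but via the paper's actual argument: $R_2\ne\emptyset$ and $L_1\ne\emptyset$ are built into $\mathcal{F}_2^3$, and the clique/adjacency conditions (e.g.\ $G[R_2]$ a clique when $yz\notin E$, every vertex of $W$ joined to all of $L_1\cup R_1\cup R_2$, and $L_2\subseteq W$) guarantee that $\{y,r_2\}$ dominates for $r_2\in R_2$ and $\{z,l_1\}$ dominates for $l_1\in L_1$. You should replace the appeal to $W$ by these pairings; everything else in your outline goes through.
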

\begin{proof}
Let $G\in {\cal F}_2$ have order~$n$, and let $G=(V,E)$.  We adopt our notation in Definition~\ref{def-f2} used to construct the graph $G$. Let $\{x,y,z\}\subseteq V$ and $N_G(x)=\{y,z\}$. In what follows, we let $V_x = V \setminus \{x,y,z \}$, and so $V = N_G[x] \cup V_x$.

If $G$ is a SP-graph, that is, if $C(G)=n$, then we let $H_2= \CG(G,\Gamma_1)$, where recall that $\Gamma_1$ represents a singleton coalition partition of $G$. In this case (when $C(G)=n$), by Observation~\ref{ob:ob1} for each vertex $u \in V \setminus\{x,y,z\}$, the set $\{u\}$ forms a coalition with at least one of the sets $\{x\}, \{y\}$, and $\{z\}$. Moreover, there is no coalition between any two sets $\{a\}$  and $\{b\}$ with $a,b \in V_x$. We also adopt our earlier notation that if $G$ is a SP-graph, then for a vertex $v$ in $G$, we represent the corresponding vertex in $H_2$ as~$\tilde{v}$.

By Definition~\ref{def-f2}, the family ${\cal F}_2$ is defined as ${\cal F}_2={\cal F}_2^1\cup {\cal F}_2^2\cup {\cal F}_2^3$. Thus, $G$ belongs to one of the families ${\cal F}_2^1$ or  ${\cal F}_2^2$ or ${\cal F}_2^3$. We proceed further with the following three claims.

\begin{claim}
\label{c:claimA}
If $G\in {\cal F}_2^1$, then $C(G)=n$ and $H_2 \in {\cal H}_2^1$.
\end{claim}
\begin{proof}
Suppose that $G \in {\cal F}_2^1$. In this case, both vertices $y$ and $z$ are adjacent to all vertices of $R_1$. If $u \in R_1$, then $\{u\}$ forms a coalition with both $\{y\}$ and $\{z\}$. Moreover, any two sets of $\{x\}$, $\{y\}$ and $\{z\}$ form a coalition. Hence, $C(G)=n$. We now consider the graph $H_2 = \CG(G,\Gamma_1)$. For each vertex $u \in R_1$, the vertex $\tilde{u}$ in $H_2$ is adjacent to both  $\tilde{y}$ and $\tilde{z}$. Moreover,  $H_2[\{\tilde{y}$, $\tilde{z}, \tilde{x}\}]$  is a clique. Thus, the graph $H_2$ belongs to the family ${\cal H}_2^1$ defined in Definition~\ref{def-h2}. Therefore, $H_2 \in {\cal H}_2$.~\smallqed
\end{proof}

\begin{claim}
\label{c:claimB}
If $G\in {\cal F}_2^2$, then $C(G)=n$ and $H_2 \in {\cal H}_2^2$.
\end{claim}
\begin{proof}
Suppose that $G \in {\cal F}_2^2$. Recall that in this case, $L_1 \ne \emptyset$ and $R_1\ne \emptyset$. Let $l_1 \in L_1$. Since $G[L_1]$ is a clique, the vertex $l_1$ is adjacent to every other vertex of $L_1$. Since the vertex $z$ is adjacent to all vertices in $R_1 \cup \{x\}$ and since the vertex $l_1$ dominates the set $L_1 \cup \{y\}$, the set $\{l_1,z\}$ is a dominating set of $G$, implying that the sets $\{l_1\}$ and $\{z\}$ form a coalition. Let $r_1 \in R_1$. Since the vertex $y$ is adjacent to every vertex of $L_1 \cup R_1 \cup \{x\}$, and since the vertex $r_1$ is adjacent to the vertex~$z$, the set $\{r_1,y\}$ is a dominating set of $G$, implying that the sets $\{r_1\}$ and $\{y\}$ form a coalition. Moreover since the vertex $x$ is adjacent to the vertex~$z$, the sets $\{x\}$ and $\{y\}$ form a coalition. Hence, $C(G)=n$.

We now consider the graph $H_2 = \CG(G,\Gamma_1)$. Since $\{x\}$ and $\{y\}$ form a coalition, $\tilde{x}$ is adjacent to  $\tilde{y}$ in $H_2$, and since $\{y\}$ and $\{z\}$ form a coalition, $\tilde{y}$ is adjacent to $\tilde{z}$. If $r_1\in R_1$, then $\tilde{r_1}$ is adjacent to $\tilde{y}$. If $l_1\in L_1$, then $\tilde{l_1}$ is adjacent to $\tilde{z}$. Since $z$ is adjacent to neither $y$ nor $l_1 \in L_1$, the set $\{y,l_1\}$ is not a dominating  set, implying that $\tilde{l_1}$ is not adjacent to $\tilde{y}$. Since $L_1 \ne \emptyset$, the set $\{x,z\}$ is not a dominating set, and so $\{x\}$ and $\{z\}$ do not form a coalition, implying that $\tilde{x}$ is not adjacent to $\tilde{z}$. If $r_1\in R_1$, then since there may exists edges between $L_1$ and $R_1$ in $G$, we note that it is possible that $\tilde{z}$ is adjacent to $\tilde{r_1}$. Thus, the graph $H_2$ belongs to the family ${\cal H}_2^2$ defined in Definition~\ref{def-h2}. Therefore, $H_2 \in {\cal H}_2$.~\smallqed
\end{proof}

\begin{claim}
\label{c:claimC}
If $G\in {\cal F}_2^3$, then $C(G)=n$ and $H_2 \in {\cal H}_{2}^3$.
\end{claim}
\begin{proof}
Suppose that $G \in {\cal F}_2^3$. Recall that by definition, $L_1 \ne \emptyset$, $R_2 \ne \emptyset$, and $W \ne \emptyset$, and that if $L_2 \ne \emptyset$, then $L_2 \subseteq W$. Further, $W$ may intersects $L_1\cup R_1\cup R_2\cup L_2$. Let $w \in W$. Since $G[W]$ is a clique, and $w$ is adjacent to all vertices $L_1 \cup R_1 \cup R_2\cup L_2$, the set $\{w\}$ forms a coalition with $\{x\}$. We note that $W$ may intersects $L_1 \cup R_1 \cup R_2 \cup L_2$.  Let $R_1 \ne \emptyset$, and let $r_1\in R_1$. By definition, the vertex $r_1$ is adjacent to all vertices of either $L_1$ or $R_2$. If $r_1$ is adjacent to all vertices of $L_1$, then $\{r_1\}$ and $\{z\}$ form a coalition, and if $r_1$ is adjacent to all vertices of $R_2$, then $\{r_1\}$ and $\{y\}$ forms a coalition.

Suppose firstly that $yz \notin E(G)$. In this case, both $G[L_1]$ and $G[R_2]$ are cliques. Therefore, if $l_1\in L_1$, then $\{z,l_1\}$ is a dominating set of $G$, and so $\{l_1\}$ and $\{z\}$ form a coalition. Moreover if $r_2\in R_2$, then $\{y,r_2\}$ is a dominating set of $G$, and so $\{r_2\}$ and $\{y\}$ form a coalition.

Suppose secondly that $yz \in E(G)$. In this case, every vertex $l_1\in L_1$ is adjacent to all vertices of either $L_1$ or $R_2$, and  every vertex $r_2\in R_2$ is adjacent to all vertices of either $L_1$ or $R_2$. If $l_1\in L_1$ and $l_1$ is adjacent to all vertices of $L_1$, then  $\{l_1\}$ and $\{z\}$ form a coalition, and if $l_1$ is adjacent to all vertices of $R_2$, then $\{l_1\}$ and $\{y\}$ form a coalition. Analogously, if $r_2 \in R_2$, then $\{r_2\}$ forms a coalition with either $\{z\}$  or $\{y\}$.

From the above properties we infer that $C(G)=n$. We now consider the graph $H_2 = \CG(G,\Gamma_1)$. Since $L_1\ne \emptyset $ and $R_2\ne \emptyset$, the set $\{x\}$ does not form a coalition with $\{y\}$ or $\{z\}$. Thus, $\tilde{x}$ is adjacent to neither $\tilde{y}$ nor $\tilde{z}$ in $H_2$. If $w \in W$, then the vertex set $\{x\}$ forms a coalition with $\{w\}$, and so $\tilde{w}$ is adjacent to $\tilde{x}$. If $R_1 \ne \emptyset$ and $r_1\in R_1$, then $\{r_1\}$ forms a coalition with either $\{y\}$ or $\{z\}$, and so $\tilde{r_1}$ is adjacent to either $\tilde{y}$ or $\tilde{z}$.
%
%
If $yz \notin E(G)$, then for $l_1\in L_1$ and $r_2\in R_2$, we know that $\{l_1\}$ forms a coalition with $\{z\}$, and $\{r_2\}$ forms a coalition with $\{y\}$. Hence, $\tilde{l_1}$ is adjacent to $\tilde{z}$, and $\tilde{r_2}$ is adjacent to $\tilde{y}$. 
%
If $yz \in E(G)$, then for $l_1\in L_1$ and $r_2\in R_2$, we know that each of $\{l_1\}$ and $\{r_2\}$ form a coalition with either $\{y\}$ or $\{z\}$. Hence, both $\tilde{l_1}$ and $\tilde{r_2}$ are adjacent to either $\tilde{z}$ or  $\tilde{y}$.
Let $W'=W, L_1'=L_1\setminus W, R_1'=R_1\setminus W,  R_2'=R_2\setminus W$, and $x'=\tilde{x}, y'=\tilde{y}$, and $z'=\tilde{z}$. If $u\in W\setminus L_1\cup R_1\cup R_2$, then $\{u\}$ and $\{x\}$ do not form a coalition, and therefore $x'$ is not adjacent to any any vertex of $L_1'\cup R_1'\cup R_2'$ in $H_2$. 
{ Note that if $L_2=\emptyset$, then $\tilde{y}$ is adjacent to $\tilde{z}$ in $H_2$, and if $L_2\ne \emptyset$, since there is a vertex in $L_2$ which has no neighbor in  $\{y,z\}$,  $\tilde{y}$ is not adjacent to $\tilde{z}$ in $H_2$}.
Thus, the graph $H_2$ belongs to the family ${\cal H}_{2}^3$ defined in Definition~\ref{def-h2}. Therefore, $H_2 \in {\cal H}_2$.~\smallqed
\end{proof}

\medskip
The proof of Theorem~\ref{lemh2f2} follows from Claims~\ref{c:claimA},~\ref{c:claimB} and~\ref{c:claimC}.~\QED
\end{proof}

\section{$G$-SC chain}

In this section, we characterize the $G$-SC chain for all \SPs $G$ with $0\le \delta(G)\le 2$.

\subsection{$G$-SC chain  with $\delta(G)=0$}
\label{sec0}

We first consider the case when the  \SP $G$ contains an isolated vertex.  We shall prove the following theorem.

\begin{theorem}
\label{thmlscc0}
If $G$ is an \SP of order $n$ with $\delta(G)=0$, then the following properties hold.
\\[-24pt]
\begin{enumerate}
\item[{\rm (a)}] If $n=1$, then $G=K_1$,  $L_{\SCC}(G)=0$, and the $G$-SC chain is $K_1\rightarrow K_1\rightarrow K_1\rightarrow \cdots$ \1
\item[{\rm (b)}] If $n=2$, then $G=\overline{K}_2$,  $L_{\SCC}(G)=\infty$, and the $G$-SC chain is $\overline{K}_2 \rightarrow  K_2\rightarrow \overline{K}_2\rightarrow K_2 \rightarrow  \cdots$ \1
\item[{\rm (c)}] If $n>3$,  then $G\cong K_1\cup K_{n-1}$,  $L_{\SCC}(G)=1$, and the $G$-SC chain is $G\rightarrow K_{1,n-1}$. \1
\item[{\rm (d)}] If $n=3$, then $G=K_1\cup K_2$,   $L_{\SCC}(G)=\infty$, and the $G$-SC chain is
\[
K_1\cup K_2\rightarrow  P_3\rightarrow K_1\cup K_2\rightarrow P_3\rightarrow  \cdots
\]
\end{enumerate}
\end{theorem}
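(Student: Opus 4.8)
The plan is to first identify $G$ and then trace the iterated coalition-graph operator $\CG(\cdot,\Gamma_1)$ until it produces a graph that is no longer an \SP. Since $G$ is an \SP of order $n$, the all-singletons partition $\Gamma_1$ is a valid coalition partition; as it has $n$ blocks, $C(G)=n$. Because $\delta(G)=0$, Theorem~\ref{thmdelta0} forces $G\cong K_1\cup K_{n-1}$ in every case, which reads $K_1$, $\overline{K}_2$, $K_1\cup K_2$, and $K_1\cup K_{n-1}$ for $n=1,2,3$ and $n\ge 4$ respectively. Thus the only real content is the computation of the chain and, at each step, a decision as to whether the current graph is still an \SP.

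For $n\ge 2$ I would carry out one general computation. Writing $v_0$ for the isolated vertex and $v_1,\dots,v_{n-1}$ for the clique of $K_1\cup K_{n-1}$, Observation~\ref{ob:ob1} shows that $\{v_0\}$ forms a coalition with each $\{v_i\}$ (their union dominates while neither dominates alone), whereas two clique singletons never do (their union omits $v_0$). Hence $\CG(K_1\cup K_{n-1},\Gamma_1)\cong K_{1,n-1}$, so the whole problem reduces to tracking the orbit of the star $K_{1,n-1}$. (The case $n=1$ is immediate: $G=K_1$ is a fixed point of the operator, giving the constant chain and $L_{\SCC}=0$.)

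I would then split on $n$. For $n=2$ the star is $K_2$, both vertices full, so its two singletons are dominating singletons forming no coalition; thus $\CG(K_2,\Gamma_1)=\overline{K}_2$, and since $\CG(\overline{K}_2,\Gamma_1)=K_2$ we obtain the non-constant period-$2$ chain of \SPs, giving $L_{\SCC}=\infty$. For $n=3$ the star is $P_3$: its centre is full (a dominating singleton, hence no coalition partner), while the two leaves form a coalition since their closed neighbourhoods cover all of $P_3$; thus $\CG(P_3,\Gamma_1)=K_1\cup K_2$, closing the period-$2$ cycle $K_1\cup K_2\to P_3\to\cdots$ and again giving $L_{\SCC}=\infty$. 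In both cases I must verify that every graph in the cycle is an \SP, so that the chain genuinely never terminates.

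The crux is the case $n\ge 4$, where a genuine non-existence claim is needed rather than a direct computation. Here I would show the star $K_{1,n-1}$ is \emph{not} an \SP, so the chain stops after the single step $G\to K_{1,n-1}$ and $L_{\SCC}(G)=1$. The centre is a full vertex, so its singleton block is a dominating singleton and cannot be a coalition partner; and for any two distinct leaves, the union of their closed neighbourhoods is just the centre together with those two leaves, which fails to dominate the remaining $n-3\ge 1$ leaves. Hence some leaf-singleton lies in no coalition, $\Gamma_1$ is not a coalition partition, and $K_{1,n-1}$ is not an \SP. (Alternatively, since $K_{1,n-1}$ has $\delta=1$ and exactly one full vertex, Theorem~\ref{lemdelta1full} makes it an \SP only when it is $K_1\cup K_{n-1}$ plus one edge, which it is not for $n\ge 4$.) Combining the four cases gives the theorem; I expect this non-existence step to be the only real obstacle, the remaining cases being routine coalition checks.
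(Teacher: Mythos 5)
Your proposal is correct and follows essentially the same route as the paper: identify $G \cong K_1 \cup K_{n-1}$ via Theorem~\ref{thmdelta0}, compute $\CG(G,\Gamma_1) \cong K_{1,n-1}$, and then decide whether the star is an \SP{} (the paper cites Theorem~\ref{lemdelta1full} for this, which you also note as an alternative to your direct leaf-domination argument). The only cosmetic difference is that you treat all $n\ge 2$ uniformly through the star before splitting into cases, while the paper handles $n=1,2$ separately first; the content is the same.
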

\begin{proof}
Let $G$ be a \SP of order $n$ with $\delta(G)=0$. If $n=1$, then $G=K_1$. In this case, the $K_1$-SC chain is given by the sequence
$K_1 \rightarrow K_1 \rightarrow K_1 \rightarrow \cdots$. This proves part~(a).

(b) If $n=2$, then $G=\overline{K}_2$. In this case, $\CG(\overline{K}_2,\Gamma_1) = K_2$. Since $\CG(K_2,\Gamma_1)=\overline{K}_2$, the $\overline{K}_2$-SC chain is therefore given by the sequence
\[
\overline{K}_2\rightarrow  K_2\rightarrow \overline{K}_2\rightarrow K_2\rightarrow  \cdots.
\]

(c) and (d): Suppose that $n \ge 3$. By Theorem \ref{thmdelta0},  $G \cong K_1\cup K_{n-1}$.  We now consider the graph $\CG(G,\Gamma_1)$. Let $q$ be the isolated vertex of $G$, and $\{x_1,\ldots,x_{n-1}\}$ be the vertex set of $K_{n-1}$.  Thus, the set $\{x_i\}$ forms a coalition with $\{q\}$, and so $\tilde{x_i}$ is adjacent to $\tilde{q}$ in $\CG(G,\Gamma_1)$  for all $i \in [n-1]$. By Observation~\ref{ob:ob1}, there is no coalition between the sets of $\{x_i\}$ and $\{x_j\}$ for $1 \le i < j \le n-1$. Hence, $\CG(G,\Gamma_1)$ is the star $K_{1,n-1}$. We now consider the graph $K_{1,n-1}$, which has minimum degree~$1$ and contains exactly one full vertex. By Theorem \ref{lemdelta1full}, the graph $K_{1,n-1}$ is a \SP if and only if $n=3$. Hence if $n > 3$, then the $G$-SC chain is given by the sequence $G \rightarrow K_{1,n-1}$, which proves part~(c).
Suppose, finally, that $n=3$. In this case, $G=K_1\cup K_2$ and $\CG(G,\Gamma_1)=K_{1,2}\cong P_3$. Since $\CG(P_3, \Gamma_1)=K_1\cup K_2$, the $G$-SC chain for $G=K_1\cup K_2$ is given by the sequence
\[
K_1\cup K_2\rightarrow  P_3\rightarrow K_1\cup K_2\rightarrow P_3\rightarrow  \cdots
\]
which completes the proof of part~(d), and of Theorem~\ref{thmlscc0}.~\QED
\end{proof}

\subsection{$G$-SC chain  with $\delta(G)=1$}

In this section, we consider the case when the \SP $G$ has minimum degree~$1$.  We first consider the case when $G$ contains a full vertex.

\begin{theorem}
\label{thby}
If $G$ is an \SP of order $n$ with $\delta(G)=1$ that contains a full vertex, then the following properties hold.
\\[-24pt]
\begin{enumerate}
\item[{\rm (a)}] If $n=2$, then $G=K_2$,   $L_{\SCC}(G)=\infty$, and the $G$-SC chain is $K_2\rightarrow \overline{K}_2 \rightarrow K_2\rightarrow \overline{K}_2\ldots$ \1
\item[{\rm (b)}] If $n>3$ and $L_{\SCC}(G)=1$, then the $G$-SC chain is $G\rightarrow K_1 \cup K_{1,n-2}$. \1
\item[{\rm (c)}] If  $n=3$, then $G=P_3$, $L_{\SCC}(G)=\infty$, and the $G$-SC chain is
\[
P_3\rightarrow K_1\cup K_2\rightarrow P_3\rightarrow K_1\cup K_2\ldots
\]
\end{enumerate}
\end{theorem}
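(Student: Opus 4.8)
The plan is to treat each of the three cases $n=2$, $n>3$, and $n=3$ separately, in each case first identifying the structure of $G$ using the classification results already established, then computing the singleton coalition graph $\CG(G,\Gamma_1)$ directly via Observation~\ref{ob:ob1}, and finally tracing the chain until it either stabilizes or fails to continue (i.e.\ produces a graph that is not a \SP). First I would handle part~(a): when $n=2$ and $\delta(G)=1$ with a full vertex, the only possibility is $G=K_2$. Both vertices are full, each singleton is itself a dominating set, so no two singletons form a coalition, giving $\CG(K_2,\Gamma_1)=\overline{K}_2$; and since $\overline{K}_2$ has two isolated vertices whose union is dominating, $\CG(\overline{K}_2,\Gamma_1)=K_2$. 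This two-cycle $K_2\rightarrow\overline{K}_2\rightarrow K_2\rightarrow\cdots$ never stabilizes, so $L_{\SCC}(G)=\infty$, matching the convention in the excerpt.

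For part~(b), the key input is Theorem~\ref{lemdelta1full}: an \SP $G$ of order $n\ge 3$ with $\delta(G)=1$ and exactly one full vertex must be $K_1\cup K_{n-1}$ with one extra edge joining the formerly isolated vertex $x$ to a vertex of $K_{n-1}$. I would let $f$ denote the full vertex and $x$ the degree-$1$ vertex, and compute $\CG(G,\Gamma_1)$ using Observation~\ref{ob:ob1}: $f$ is full so $\{f\}$ is a dominating singleton, forming a coalition with every other singleton, while among the non-full vertices one checks directly which pairs $\{u\},\{v\}$ satisfy $\{u,v\}\cap N[w]\neq\emptyset$ for all $w$. The expectation is that $\tilde{f}$ becomes a full vertex of $\CG(G,\Gamma_1)$, that $\tilde{x}$ attaches to exactly one further vertex, and that the remaining vertices of $K_{n-1}$ form an independent set relative to each other, yielding $\CG(G,\Gamma_1)\cong K_1\cup K_{1,n-2}$ (here the isolated $K_1$ corresponds to a vertex that coalitions only with $f$). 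To conclude $L_{\SCC}(G)=1$ when $n>3$, I would argue that $K_1\cup K_{1,n-2}$ is \emph{not} a \SP for $n>3$, so the chain halts after one step; this is where one invokes the earlier structural results (Theorem~\ref{thmdelta0} or Theorem~\ref{lemdelta1full}) to rule out a singleton coalition partition of $K_1\cup K_{1,n-2}$.

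For part~(c), $n=3$ forces $G=P_3$ (the only order-$3$ graph with $\delta=1$ and a full vertex, namely the center). A direct computation gives $\CG(P_3,\Gamma_1)=K_1\cup K_2$: the two leaves each coalition with the center and with each other, while the center is full. Then $\CG(K_1\cup K_2,\Gamma_1)=P_3$ by Theorem~\ref{thmlscc0}(d), already proved in the excerpt, so the chain is the infinite alternation $P_3\rightarrow K_1\cup K_2\rightarrow P_3\rightarrow\cdots$ and $L_{\SCC}(P_3)=\infty$.

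The main obstacle I anticipate is part~(b), specifically pinning down the exact isomorphism type $K_1\cup K_{1,n-2}$ of the coalition graph. The subtlety is that Theorem~\ref{lemdelta1full} allows the extra edge to join $x$ to an \emph{arbitrary} vertex of $K_{n-1}$, so one must verify via Observation~\ref{ob:ob1} that the coalition structure among the non-full vertices is independent of that choice, and carefully determine which single vertex $\tilde{x}$ is adjacent to and which vertex ends up isolated. The second delicate point is the non-\SP claim needed to force the chain length to be exactly $1$ rather than longer: one must check that $K_1\cup K_{1,n-2}$ admits no singleton coalition partition for $n>3$, which follows because its isolated vertex together with the structure of the star fails the coalition condition of Observation~\ref{ob:ob1} for some vertex — the careful bookkeeping here is the crux of the argument.
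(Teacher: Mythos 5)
There is a genuine conceptual error running through your computations of the coalition graphs in parts~(b) and~(c): you assert that the full vertex $f$ gives a dominating singleton $\{f\}$ which therefore ``forms a coalition with every other singleton,'' and in part~(c) that the two leaves of $P_3$ each form a coalition with the center. By the paper's definition, a coalition is a pair of sets \emph{neither of which} is a dominating set but whose union is; hence a dominating singleton is never a coalition partner of anything. It is admissible in a coalition partition precisely because the definition exempts one-element dominating sets from needing a partner, but in $\CG(G,\Gamma_1)$ it contributes an \emph{isolated} vertex. Your description is also internally inconsistent: if $\tilde{f}$ were a full vertex of $\CG(G,\Gamma_1)$, that graph would be connected and could not be $K_1\cup K_{1,n-2}$; and if the leaves of $P_3$ formed coalitions with the center as well as with each other, $\CG(P_3,\Gamma_1)$ would be $K_3$, not $K_1\cup K_2$. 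The correct picture (and the one the paper gives) is: $\tilde{f}$ is the isolated $K_1$; the degree-one vertex $q$ forms a coalition with each of the $n-2$ non-full vertices of the clique (since $\{q,y\}$ dominates, while $\{y\}$ fails to dominate $q$ and $\{q\}$ fails to dominate the clique), so $\tilde{q}$ is the center of the star $K_{1,n-2}$; and no two non-full clique vertices form a coalition since neither lies in $N[q]$ (Observation~\ref{ob:ob1}). Your claim that $\tilde{x}$ ``attaches to exactly one further vertex'' is likewise wrong -- the degree-one vertex acquires $n-2$ neighbours in the coalition graph, not one.

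Apart from this, your overall architecture matches the paper's: part~(a) is handled identically; the appeal to Theorem~\ref{lemdelta1full} to pin down $G$ for $n\ge 3$ is the right move (and the choice of which clique vertex receives the extra edge is immaterial by symmetry, so that worry dissolves); and the termination argument for $n>3$ via Theorem~\ref{thmdelta0} (the isolated vertex of $K_1\cup K_{1,n-2}$ forces the graph to be $K_1\cup K_{n-1}$ if it is to be a \SP, which happens only when $n=3$) is exactly what the paper uses. Once you repair the coalition computation as above, the proof goes through along the paper's lines.
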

\begin{proof}
Let $G$ be a \SP of order $n$ with $\delta(G)=1$ that $G$ contains at least one full vertex. If $n=2$, then $G=K_2$. In this case since $\CG(K_2,\Gamma_1)=\overline{K}_2$, the $K_2$-SC chain is given by the sequence $K_2\rightarrow \overline{K}_2 \rightarrow K_2\rightarrow \overline{K}_2 \cdots$. This proves part~(a).

(b) and (c): Suppose that $n \ge 3$. In this case, the graph $G$ contains exactly one full vertex. By Theorem~\ref{lemdelta1full}, $G$ is obtained from $K_1\cup K_{n-1}$  by adding an edge between the isolated vertex, say $q$,  and a vertex, say $x$, in the complete graph $K_{n-1}$. We now consider $\CG(G,\Gamma_1)$. By Observation~\ref{ob:ob1}, if $y$ is an arbitrary vertex of the complete graph $K_{n-1}$ different from~$x$, then the set $\{y\}$ forms a coalition with $\{q\}$, and so $\tilde{y}$ is adjacent to $\tilde{q}$ in $\CG(G,\Gamma_1)$. Moreover, no two vertices in the complete graph $K_{n-1}$, different from~$x$, form a coalition. The full vertex~$x$ does not form a coalition with any other set. Therefore, $\CG(G,\Gamma_1) \cong K_1 \cup K_{1,n-2}$. However, $K_1 \cup K_{1,n-2}$ is a \SP if and only if $n=3$.  Hence if $n > 3$, then the $G$-SC chain is given by the sequence $G \rightarrow K_1 \cup K_{1,n-2}$, which proves part~(b).
Suppose, finally, that $n=3$. In this case, $G \cong P_3$ and  $\CG(P_3,\Gamma_1)=K_1\cup K_2$. Since $\CG(K_1\cup K_2, \Gamma_1)=P_3$, the $G$-SC chain for $G$ is given by the sequence
\[
P_3 \rightarrow K_1\cup K_2\rightarrow P_3\rightarrow K_1\cup K_2\ldots
\]
which completes the proof of part~(c), and of Theorem~\ref{thby}.~\QED
\end{proof}

We first next the case when  the \SP $G$ contains no full vertex.

\begin{theorem}
\label{thbyr}
If $G$ is a \SP of order $n$ with $\delta(G)=1$ that contains no full vertex, then the $G$-SC chain is one of the following chains, where $H_1$ is a graph in the family ${\cal H}_1$. \\[-24pt]
\begin{enumerate}
\item[{\rm (a)}] $G\rightarrow H_1$,
\item[{\rm (b)}] $G\rightarrow C_4\rightarrow K_4\rightarrow \overline{K}_4$, or
\item[{\rm (c)}] $G\rightarrow K_{2,n-2} \rightarrow K_1 + {K}_{1,n-2}$.
\end{enumerate}
\end{theorem}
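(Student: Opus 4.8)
The plan is to use the two structure theorems already established for the $\delta=1$, full-vertex-free case to pin down the first arrow of the chain, and then to analyze the resulting graph $H_1 \in {\cal H}_1$ directly. First I would note that, since $G$ is a \SP with $\delta(G)=1$ and no full vertex, Theorem~\ref{thmdelta1nf} gives $G \in {\cal F}_1$, and then Theorem~\ref{thmfh1cg} gives $\CG(G,\Gamma_1) = H_1 \in {\cal H}_1$. Thus every such chain begins $G \rightarrow H_1$, and the whole problem reduces to deciding, for each $H_1 \in {\cal H}_1$, whether $H_1$ is itself a \SP and, if so, what its SC-graph is. Writing $n = |V(H_1)| = 3 + |P_1| + |Q_1|$ and recalling the adjacencies of ${\cal H}_1$ (parts $A_1 = \{x_1,y_1\}$ and $B_1 = P_1 \cup \{w_1\} \cup Q_1$, with $y_1$ joined to all of $B_1$ and $x_1$ joined to $P_1 \cup \{w_1\}$), I would split into the two cases $Q_1 \neq \emptyset$ and $Q_1 = \emptyset$, which exhaust ${\cal H}_1$.

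In the case $Q_1 \neq \emptyset$, each $q \in Q_1$ has degree one with unique neighbour $y_1$. I would show $H_1$ is not a \SP by arguing that $\{q\}$ has no coalition partner: a partner $\{v\}$ would force $\{q,v\}$ to dominate, so $v$ must dominate both $x_1$ and every vertex of $Q_1 \setminus \{q\}$ (non-empty as $|Q_1| \ge 2$). But dominating $x_1$ requires $v \in \{x_1\} \cup P_1 \cup \{w_1\}$, whereas dominating a vertex of $Q_1 \setminus \{q\}$ requires $v \in \{y_1\} \cup Q_1$, and these sets are disjoint---a contradiction. Hence the chain stops at $H_1$, yielding the chain $G \rightarrow H_1$ of option~(a).

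In the case $Q_1 = \emptyset$, both $x_1$ and $y_1$ are adjacent to all of $B_1 = P_1 \cup \{w_1\}$, so $H_1 = K_{2,\,|P_1|+1} = K_{2,n-2}$ with $n-2 \ge 2$. I would then compute $\CG(K_{2,n-2},\Gamma_1)$ using Observation~\ref{ob:ob1}. Labelling the size-two part $\{a,b\}$ and the size-$(n-2)$ part $c_1,\dots,c_{n-2}$, one checks that every pair among $\{a\},\{b\},\{c_i\}$ forms a coalition, while $\{c_i\}$ and $\{c_j\}$ form a coalition precisely when $\{c_i,c_j\}$ dominates, i.e.\ exactly when $n-2 = 2$. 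This is where the remaining options separate. For $n = 4$ we have $H_1 = C_4$ and all six pairs are coalitions, so $\CG(C_4,\Gamma_1) = K_4$; since every vertex of $K_4$ is full, its $\Gamma_1$-partition admits no coalition and $\CG(K_4,\Gamma_1) = \overline{K}_4$, which (being four isolated vertices) is not a \SP. This gives the terminating chain $G \rightarrow C_4 \rightarrow K_4 \rightarrow \overline{K}_4$ of option~(b). For $n \ge 5$ the pairs $\{c_i\},\{c_j\}$ are not coalitions, so $\CG(K_{2,n-2},\Gamma_1)$ is the join of the $K_2$ on $\{a,b\}$ with the independent set on $c_1,\dots,c_{n-2}$, namely $K_1 + K_{1,n-2}$; I would then verify this is not a \SP, since its two full vertices cannot be coalition partners and each remaining (degree-two) vertex $c_i$ satisfies that $N[c_i] \cup N[c_j]$ misses some $c_k$ whenever $n-2 \ge 3$, so no $c_i$ has a partner. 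This yields the terminating chain $G \rightarrow K_{2,n-2} \rightarrow K_1 + K_{1,n-2}$ of option~(c).

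The main obstacle, and the only genuinely delicate point, is the boundary computation of $\CG(K_{2,n-2},\Gamma_1)$: one must notice that whether two vertices of the large side form a coalition depends on $n$, so that the single subfamily with $Q_1 = \emptyset$ splits into the $n=4$ outcome $K_4$ and the $n \ge 5$ outcome $K_1 + K_{1,n-2}$, which are genuinely different graphs (for $n=4$ the formula $K_1 + K_{1,n-2}$ would give $K_4$ minus an edge, not $K_4$). The remaining verifications---that $\overline{K}_4$ and $K_1 + K_{1,n-2}$ fail to be \SPs, and that $H_1$ with $Q_1 \neq \emptyset$ fails to be a \SP---are routine once the coalition criterion of Observation~\ref{ob:ob1} is applied.
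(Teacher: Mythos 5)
Your proposal is correct and follows essentially the same route as the paper: reduce to $H_1=\CG(G,\Gamma_1)\in{\cal H}_1$ via Theorems~\ref{thmdelta1nf} and~\ref{thmfh1cg}, rule out the case $Q_1\neq\emptyset$, and then compute the continuation of the chain from $K_{2,n-2}$ separately for $n=4$ and $n\ge 5$. The only deviation is local: where the paper shows that $H_1$ with $Q_1\neq\emptyset$ is not an SP-graph by counting degree-one vertices and invoking the ${\cal F}_1$-characterization, you argue directly that a vertex of $Q_1$ has no coalition partner; both arguments are valid, and your direct one (like your explicit verification that $K_1+K_{1,n-2}$ is not an SP-graph) is, if anything, more self-contained.
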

\begin{proof}
Let $G$ be a \SP of order $n$ with $\delta(G)=1$ that contains no full vertex. By Theorem~\ref{thmdelta1nf}, $G\in {\cal F}_1$.  By Theorem \ref{thmfh1cg}, $H_1= \CG(G,\Gamma_1)\in {\cal H}_1$.  If $H_1$ is not a SP-graph, then the $G$-SC chain is the chain $G\rightarrow H_1$. Hence, we may assume that $H_1$ is a SP-graph. We now compute the SC-graph of $H_1$.  By construction, all graphs in the family ${\cal H}_1$ have order at least~$4$, implying that $n \ge 4$.

We show next that $Q_1 = \emptyset$.  If $H_1\cong P_4$, then it is obvious that $Q_1=\emptyset$. Now, we assume that $H_1$ is not isomorphic to $P_4$. Suppose, to the contrary, that $Q_1 \ne \emptyset$. In this case, $|Q_1| \ge 2$ and the graph $H_1$ is illustrated in Figure~\ref{figH}(a) and has vertex set $\{x_1,y_1\} \cup P_1 \cup \{w_1\} \cup Q_1$. Every vertex in $Q_1$ has degree~$1$ in $H_1$, and so $H_1$ has at least $|Q_1| \ge 2$ vertices of degree~$1$. Since every   connected graph in the family~${\cal F}_1\setminus \{P_4\}$ has exactly one vertex of degree~$1$, namely the vertex~$x$ in Definition~\ref{defn1}, the graph $H_1 \notin {\cal F}_1$. Thus by Theorem~\ref{thmdelta1nf}, the graph $H_1$ is not a SP-graph, a contradiction to our assumption.

Hence, $Q_1 = \emptyset$. In this case, the graph $H_1$ is illustrated in Figure~\ref{figH}(b). By the construction of graphs in the family ${\cal H}_1$ with $Q_1 = \emptyset$, we have $H_1\cong K_{2, n-2}$. If $n=4$, then $G\cong C_4$. As observed earlier, the $C_4$-SC chain is given by $C_4 \rightarrow K_4 \rightarrow \overline{K}_4$. Hence, we may assume that $n \ge 5$.

Adopting our earlier notation when constructing graphs in the family ${\cal H}_1$, we let $B_1 = P_1 \cup \{w_1\}$. Further, we let $A_1 = \{x_1,y_1\}$. Thus, $H_1$ is the complete bipartite graph $K_{2, n-2}$ with partite sets $A_1$ and $B_1$, where $|A_1| = 2$ and $|B_1| = n - 2 \ge 3$. If $a \in A_1$ and $b \in B_1$, then the sets $\{a\}$ and $\{b\}$ form a coalition in $H_1$. Moreover, the sets $\{x_1\}$ and $\{y_1\}$ form a coalition in $H_1$. However, if $b_1$ and $b_2$ are two distinct vertices in $B_1$, then since $n \ge 5$, the sets $\{b_1\}$ and $\{b_2\}$ do not form a coalition. Hence, the singleton coalition graph of $H_1$ is isomorphic to $K_1+K_{1,n-2}$, where here `$+$' denotes the join operation. However, since the graph $K_1+K_{1,n-2}$ has two full vertices, the coalition graph of $K_1+K_{1,n-2}$ associated with any coalition partition  has two isolated vertices, and is therefore not a SP-graph. Hence, in this case when $n \ge 5$, the $G$-SC chain is given by $G\rightarrow K_{2,n-2} \rightarrow K_1+K_{1,n-2}$.~\QED
\end{proof}

\subsection{$G$-SC chain  with $\delta(G)=2$}

In this section, we consider the case when the \SP $G$ has minimum degree~$2$.  We first prove the following result.

\begin{theorem}
\label{thmlscc21}
If $G$ is a \SP of order $n$ with $\delta(G)=2$ that contains at least one full vertex, then $L_{\SCC}(G)=1$.
\end{theorem}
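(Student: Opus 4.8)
The plan is to leverage Theorem~\ref{thmfulldelta2}, which already enumerates all graphs $G$ of order $n$ with $\delta(G)=2$ that possess a full vertex and satisfy $C(G)=n$ (i.e., are \SPs), according to whether $G$ has exactly one, exactly two, or at least three full vertices. Since we are told $G$ is a \SP of order $n$ with $\delta(G)=2$ containing at least one full vertex, $G$ falls into exactly one of these three cases. For each case I would compute the SC-graph $\CG(G,\Gamma_1)=G_2$ explicitly and then argue that $G_2$ is \emph{not} a singleton-partition graph, which forces the chain to terminate at $G_2$ and yields $L_{\SCC}(G)=1$.

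First I would dispose of the degenerate case of at least three full vertices: by Theorem~\ref{thmfulldelta2}(c), this forces $G\cong C_3$. Then $\CG(C_3,\Gamma_1)=K_3$, and I would check that $K_3$ (having three full vertices, so no non-dominating singletons can pair up) is not a \SP, giving length~$1$. For exactly two full vertices, Theorem~\ref{thmfulldelta2}(b) gives $G\cong (K_1\cup K_{n-3})+K_2$; I would compute its SC-graph and verify that each full vertex of $G$ becomes an isolated vertex in $\CG(G,\Gamma_1)$ (a full vertex forms no coalition with any set), so $G_2$ has at least two isolated vertices. A graph with two or more isolated vertices cannot be a \SP, since two distinct isolated singletons can never have a union that dominates. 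This immediately halts the chain.

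The main case is exactly one full vertex. By Theorem~\ref{thmfulldelta2}(a), this happens precisely when $G[V\setminus\{f\}]\in{\cal F}_1$, where $f$ is the unique full vertex. The key structural point is that the full vertex $f$ forms no coalition with any other singleton, so $\tilde{f}$ is an isolated vertex of $G_2=\CG(G,\Gamma_1)$. Meanwhile, removing $f$ reduces the problem to the $\delta=1$, no-full-vertex situation already analyzed: by Theorem~\ref{thmfh1cg} the coalition graph of $G[V\setminus\{f\}]$ lies in ${\cal H}_1$, and since the coalitions in $G$ among the non-$f$ vertices coincide with those in $G[V\setminus\{f\}]$ (adding a full vertex does not create or destroy coalitions among the remaining singletons), we get $G_2\cong K_1\cup H_1$ for some $H_1\in{\cal H}_1$. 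Now $G_2$ has the isolated vertex $\tilde f$ \emph{plus} further low-degree structure inherited from $H_1$; I would show $G_2$ has either a second isolated vertex or otherwise fails the \SP criterion, so that $\CG(G,\Gamma_1)$ is not a \SP and the chain stops after one step.

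The step I expect to be the main obstacle is verifying, in the one-full-vertex case, that $K_1\cup H_1$ is never a singleton-partition graph for any $H_1\in{\cal H}_1$. The extra isolated vertex $\tilde f$ interacts delicately with the bipartite structure of $H_1$ (parts $A_1=\{x_1,y_1\}$ and $B_1=P_1\cup\{w_1\}\cup Q_1$), and one must rule out every choice of the parameters $P_1,Q_1$. The cleanest route is to observe that in $K_1\cup H_1$ the isolated vertex $\tilde f$ is never a full vertex and never dominates, so for $K_1\cup H_1$ to be a \SP the singleton $\{\tilde f\}$ would have to form a coalition with some $\{\tilde u\}$; but $\{\tilde f,\tilde u\}$ fails to dominate $\tilde f$'s own non-neighbors unless $\tilde u$ dominates all of $H_1$, i.e. $\tilde u$ is a full vertex of $H_1$—and graphs in ${\cal H}_1$ are bipartite with both parts nonempty, hence have no full vertex. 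This contradiction shows $\{\tilde f\}$ can join no coalition, so $K_1\cup H_1\notin\SP$. Combining the three cases gives $L_{\SCC}(G)=1$ throughout.
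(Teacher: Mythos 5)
Your overall strategy coincides with the paper's: split on the number of full vertices via Theorem~\ref{thmfulldelta2}, observe that each full vertex of $G$ becomes an isolated vertex of $\CG(G,\Gamma_1)$, and conclude that the resulting coalition graph is never a \SP. Your two-full-vertex case matches the paper exactly. In the one-full-vertex case you argue directly that $K_1\cup H_1$ fails the \SP\ criterion because the isolated vertex $\tilde f$ would need a coalition partner that is a full vertex of $H_1$, and no graph in ${\cal H}_1$ has a full vertex (it is bipartite with both parts of size at least two); the paper instead invokes Theorem~\ref{thmdelta0} to force $H_1\cong K_{n-1}$ and then rules that out. These are essentially the same observation, and your intermediate claim that adjoining a full vertex neither creates nor destroys coalitions among the remaining singletons is correct and worth stating explicitly, as you do.

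There is, however, a concrete error in your treatment of the case of at least three full vertices. You assert $\CG(C_3,\Gamma_1)=K_3$ and then claim that $K_3$ is not a \SP. Both statements are wrong: in $C_3=K_3$ every singleton is a dominating set of cardinality one, so $\Gamma_1$ is a valid coalition partition (hence $K_3$ \emph{is} a \SP, with $C(K_3)=3$), and precisely because every singleton already dominates, no two singletons form a coalition, so $\CG(C_3,\Gamma_1)=\overline{K}_3$. The correct conclusion is that $\overline{K}_3$ is not a \SP\ (three isolated vertices, and no pair of them dominates the third), giving the chain $K_3\rightarrow\overline{K}_3$ and $L_{\SCC}(C_3)=1$ as in the paper. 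The slip does not affect the final answer, but as written the step "check that $K_3$ is not a \SP" would fail, so it must be corrected.
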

\begin{proof}
Suppose that $G = (V,E)$ is a \SP of order $n$ with $\delta(G)=2$ that contains at least one full vertex. We note that $n \ge 3$. If $n = 3$, then $G = K_3$. In this case, $\CG(G, \Gamma_1) = \overline{K}_3$. Since $\overline{K}_3$ is not a SP-graph, the $G$-SC chain is the chain $K_3 \rightarrow \overline{K}_3$, and so  $L_{\SCC}(G)=1$. Hence we may assume that $n \ge 4$, for otherwise the desired result follows. Thus, $G$ has at most two full vertices. Let $I = \CG(G, \Gamma_1)$. If $G$ has two full vertices, then $I$ has two isolated vertices, and so, by Theorem~\ref{thmdelta0}, the graph $I$ is not a SP-graph, implying that in this case the $G$-SC chain is the chain $G \rightarrow I$, and so  $L_{\SCC}(G)=1$. Hence, we may assume that $G$ has exactly one full vertex, say $f$. By Theorem~ \ref{thmfulldelta2}, $G' = G[V \setminus\{f\}] \in {\cal F}_1$. Let $H' = \CG(G',\Gamma_1)$. By Theorem~\ref{thmfh1cg}, $H' \in {\cal H}_1$. Since $G$ has a full vertex, the singleton coalition graph $I$ of $G$ contains an isolated vertex, implying that $I \cong K_1 \cup H'$. If $I$ is a SP-graph, then by Theorem~\ref{thmdelta0}, $H' \cong K_{n-1}$. However by the definition of family ${\cal H}_1$ (see Definition~\ref{defnH1}), the graph $H'$ is not isomorphic to $K_{n-1}$, a contradiction. Hence, $I$ is not a SP-graph, implying that the $G$-SC chain is once again the chain $G \rightarrow I$, and so  $L_{\SCC}(G)=1$.~\QED
\end{proof}

\medskip
Suppose, next, that $G$ is a \SP with $\delta(G)=2$ that contains no full vertex. Let $B = \CG(G,\Gamma_1)$. By Theorem~\ref{lemh2f2}, $B \in {\cal H}_2$. If $B$ is not a SP-graph, then the $G$-SC chain is $G\rightarrow B$, and so $L_{\SCC}(G)=1$. Hence we may assume in what follows that $B$ is an SP-graph. By Definition~\ref{def-h2}, the singleton coalition graph $B \in {\cal H}_2$ belongs to one of the three families ${\cal H}_2^1$, ${\cal H}_2^2$, or ${\cal H}_2^3$. We consider the three possibilities in turn for the graph $B$. Throughout the following three lemmas, we adopt our notation in  Definition~\ref{def-h2}.

\begin{lemma}
\label{lemcase1}
If $B \in {\cal H}_2^1$, then the $G$-SC chain is one of the following chains. \\ [-24pt]
\begin{enumerate}
\item[{\rm (a)}]$G\rightarrow B\rightarrow \overline{K}_4$, 
\item[{\rm (b)}]$G\rightarrow B\rightarrow \overline{K}_3\cup K_2$,
\item[{\rm (c)}] $G \rightarrow B \rightarrow \overline{K}_2\cup K_2$ or
\item[{\rm (d)}] $G\rightarrow B\rightarrow \overline{K}_2\cup P_3$.
\end{enumerate}
\end{lemma}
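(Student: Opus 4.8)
The plan is to analyze the singleton coalition graph $B \in {\cal H}_2^1$ directly from its structure and compute its SC-graph under the assumption that $B$ is itself an SP-graph. Recall from Definition~\ref{def-h2} that $B$ has vertex set $\{x',y',z'\} \cup R_1'$ with $R_1' \ne \emptyset$, where $\{x',y',z'\}$ induces a clique $K_3$, every vertex of $R_1'$ is adjacent to both $y'$ and $z'$, the set $R_1'$ is independent, and $x'$ may or may not be adjacent to various vertices of $R_1'$. The key observation is that $y'$ and $z'$ are \emph{full vertices} of $B$: each is adjacent to $x'$, to the other, and to every vertex of $R_1'$. Since any singleton coalition partition treats full vertices specially (a full vertex forms no coalition with any other singleton), I expect $B$ to have at most a very small number of SP-viable configurations, forcing $B$ to be small. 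So the first step is to pin down exactly which graphs in ${\cal H}_2^1$ are SP-graphs at all.

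First I would argue that because $y'$ and $z'$ are both full vertices, the coalition graph $\CG(B, \Gamma_1)$ must contain two isolated vertices (the images of $\tilde{y'}$ and $\tilde{z'}$), mirroring the reasoning used in Theorem~\ref{thmlscc21}. This immediately constrains things: for $B$ to be an SP-graph we need every non-full singleton to pair into a coalition, and by Observation~\ref{ob:ob1} a pair $\{a\},\{b\}$ forms a coalition exactly when $\{a,b\}$ dominates $B$. Since $y'$ and $z'$ already dominate almost everything, the remaining vertices $x'$ and the vertices of $R_1'$ have very limited ability to form dominating pairs among themselves. I would enumerate the possibilities for $|R_1'|$ and for the adjacencies of $x'$ to $R_1'$, checking in each case whether a $\Gamma_1$-partition exists and, if so, computing the resulting SC-graph. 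The four listed outcomes $\overline{K}_4$, $\overline{K}_3 \cup K_2$, $\overline{K}_2 \cup K_2$, and $\overline{K}_2 \cup P_3$ strongly suggest that exactly four small configurations survive, corresponding to small values of $|R_1'|$ together with the two choices (isolated $x'$ versus $x'$ joined to some of $R_1'$).

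The core computation in each surviving case is to determine which singletons form coalitions in $B$ and thereby read off $\CG(B, \Gamma_1)$. I would handle this by noting that a singleton $\{v\}$ with $v \in R_1' \cup \{x'\}$ forms a coalition with $\{u\}$ iff $N_B[v] \cup N_B[u] = V(B)$; since $y'$ and $z'$ contribute isolated vertices to the SC-graph, the nontrivial part is the induced behavior on $\{x'\} \cup R_1'$. The vertices producing a $K_2$ or $P_3$ component in the output arise from the few dominating pairs among these low-degree vertices, while the $\overline{K}_2$ or $\overline{K}_3$ pieces are the isolated images. The main obstacle I anticipate is bookkeeping: correctly separating the subcase where $x'$ is isolated in $B$ (so $\{x'\}$ can only coalesce via its clique neighbors $y',z'$, which are full and hence useless) from the subcase where $x'$ has neighbors in $R_1'$, and verifying in the isolated-$x'$ situation that a valid $\Gamma_1$-partition still exists at all. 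Finally, for each of the four resulting graphs $\overline{K}_4$, $\overline{K}_3 \cup K_2$, $\overline{K}_2 \cup K_2$, $\overline{K}_2 \cup P_3$, I would confirm it is \emph{not} an SP-graph (each contains isolated vertices and does not match $K_1 \cup K_{m-1}$ from Theorem~\ref{thmdelta0}), so the chain terminates after two steps, giving precisely the stated chains $G \rightarrow B \rightarrow \cdots$.
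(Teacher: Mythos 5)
Your proposal is correct and follows essentially the same route as the paper: both hinge on the observation that $y'$ and $z'$ (and possibly $x'$) are full vertices of $B$, so they become isolated in $\CG(B,\Gamma_1)$ and cannot serve as coalition partners, forcing the vertices of $\{x'\}\cup R_1'$ to form dominating pairs among themselves, which survives only for the four small configurations ($K_4$, $\overline{K}_2+K_3$, $K_4-e$, and $K_4$ plus a degree-$2$ vertex). The only organizational difference is that the paper packages your enumeration by deleting the full vertices and applying Theorem~\ref{thmdelta0} to the resulting graph $B'$ with isolated vertices, rather than checking the coalition pairs case by case.
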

\begin{proof}
Suppose that the singleton coalition graph $B \in {\cal H}_2^1$ (as illustrated in Figure~\ref{h2familyc1}).  In this case, the graph $B$ has either  two full vertices (namely $y'$ and $z'$) or three full vertices (namely $x', y'$, and $z'$).    Thus, the singleton sets $\{z'\}$ and $\{y'\}$  (and also singleton set $\{x'\}$ when $B$ has three full vertices) are included in any coalition partition of $B$. Let $B'$ be the graph obtained from $B$ by removing all  its full vertices.  Let $B'$ have order~$n'$, and so $n' = n-2$ or $n'=n-3$. We note that $B$ is a \SP if and only if $B'$ is a SP-graph.

If $n'=n-3$, then  $B'\cong\overline{K}_{n'}$. It is clear that $B'$ is a \SP if and only if $B'\cong\overline{K}_{1}$ or $B'\cong\overline{K}_{2}$. Then, $B\cong K_4$ or $B\cong\overline{K_2}+K_3$. Hence, the $G$-SC chain is the sequence
$$G\rightarrow B\rightarrow \overline{K}_4,~\text{if~} B\cong K_4,$$
or  the sequence
$$G\rightarrow B\rightarrow \overline{K}_3\cup K_2,~\text{if~} B\cong\overline{K_2}+K_3.$$

Now, we assume that $n'=n-2$. By construction of graphs in the family~${\cal H}_2^1$, either the vertex~$x'$ has degree~$2$ in $B$ or the vertex~$x'$ has degree at least~$3$ in $B$. Further if $x'$ has degree~$2$ in $B$, then every vertex in the independent set $R_1'$ has degree~$2$ and is adjacent to only $y'$ and $z'$ in $B$, while if $x'$ has degree at least~$3$ in $B$, then we can partition the set $R_1'$ into two sets $R_{1,2}'$ and $R_{1,3}'$ where every vertex in $R_{1,2}'$ has degree~$2$ and is adjacent to only $y'$ and $z'$ in $B$, and where every vertex in $R_{1,3}'$ has degree~$3$ and is adjacent to only $x'$, $y'$ and $z'$ in $B$. Hence if $x'$ has degree~$2$ in $B$, then $B' = \overline{K}_{n'}$, while if $x'$ has degree at least~$3$ in $B$, then $B' = \overline{K}_r \cup K_{1,n'-r-1}$ for some integer $r \ge 1$ where in this case $n' \ge r + 2$. Note that since $n'=n-2$, we must have $r\geq 1$.  In both cases, the graph $B'$ contains at least one isolated vertex.

By our earlier observations, $B$ is a \SP if and only if $B'$ is a SP-graph. Furthermore, the graph $B'$ contains at least one isolated vertex. By Theorem~\ref{thmdelta0}, $B'$ is a \SP if and only if $B' \cong K_1 \cup K_{n'-1}$. As observed earlier, either $B' = \overline{K}_{n'}$ or $B' = \overline{K}_r \cup K_{1,n'-r-1}$ for some integer $r \ge 1$ where in this case $n' \ge r + 2$. Therefore, $B'$ is a \SP if and only if   $B'\cong \overline{K}_2$ or $B'\cong K_1\cup K_2$.

Reconstructing the graph $B$ from the graph $B'$ by adding back the deleted vertices $y'$ and $z'$, we infer that $B$ is a \SP if and only if it is isomorphic to the graph $K_4 - e$ obtained from a $K_4$ by removing an edge (illustrated in Figure~\ref{figHpg}(a)) or the graph of order~$5$ obtained from $K_4$ by adding a new vertex and joining it to two vertices of the complete graph (illustrated in Figure~\ref{figHpg}(b)). Moreover, if $B = K_4 - e$, then  $\CG(B,\Gamma_1) \cong \overline{K}_2\cup K_2$, while if $B$ is obtained from $K_4$ by adding a new vertex of degree~$2$, then $\CG(B,\Gamma_1) \cong \overline{K}_2 \cup P_3$. In both cases, $\CG(B, \Gamma_1)$ is not a SP-graph. Therefore, the $G$-SC chain is given by $G \rightarrow B \rightarrow \overline{K}_2\cup K_2$ or by $G\rightarrow B\rightarrow \overline{K}_2\cup P_3$.~\QED
\end{proof}

\begin{figure}[htb]
\begin{center}
{\subfloat[]{\includegraphics[width = 0.2\textwidth]{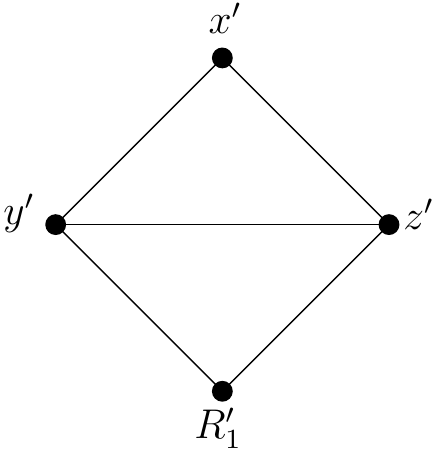}} }
\hspace{1.25cm}
{\subfloat[]{\includegraphics[width =0.2\textwidth]{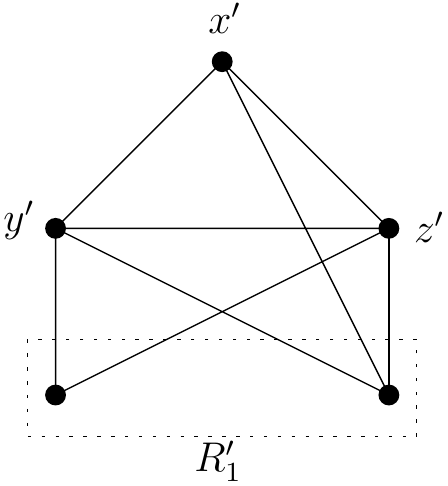}}}
\caption{(a): The graph $B$ with $B'\cong \overline{K}_2.$ (b). The graph $B$ with $B'\cong K_1\cup K_2.$}
\label{figHpg}
\end{center}
\end{figure}

Let $M_1, M_2$, and  $M_3$   be three  graphs depicted in Figure \ref{cs}. Moreover, let $M_4$ be obtained from $M_3$ by adding an edge from one of the leaf neighbors of the vertex of degree~$n-2$ in $M_3$ to the vertex of degree~$2$ in $M_3$.

\begin{figure}[htb]
\begin{center}
{\subfloat[ \label{figM1}]{\includegraphics[width = 0.15\textwidth]{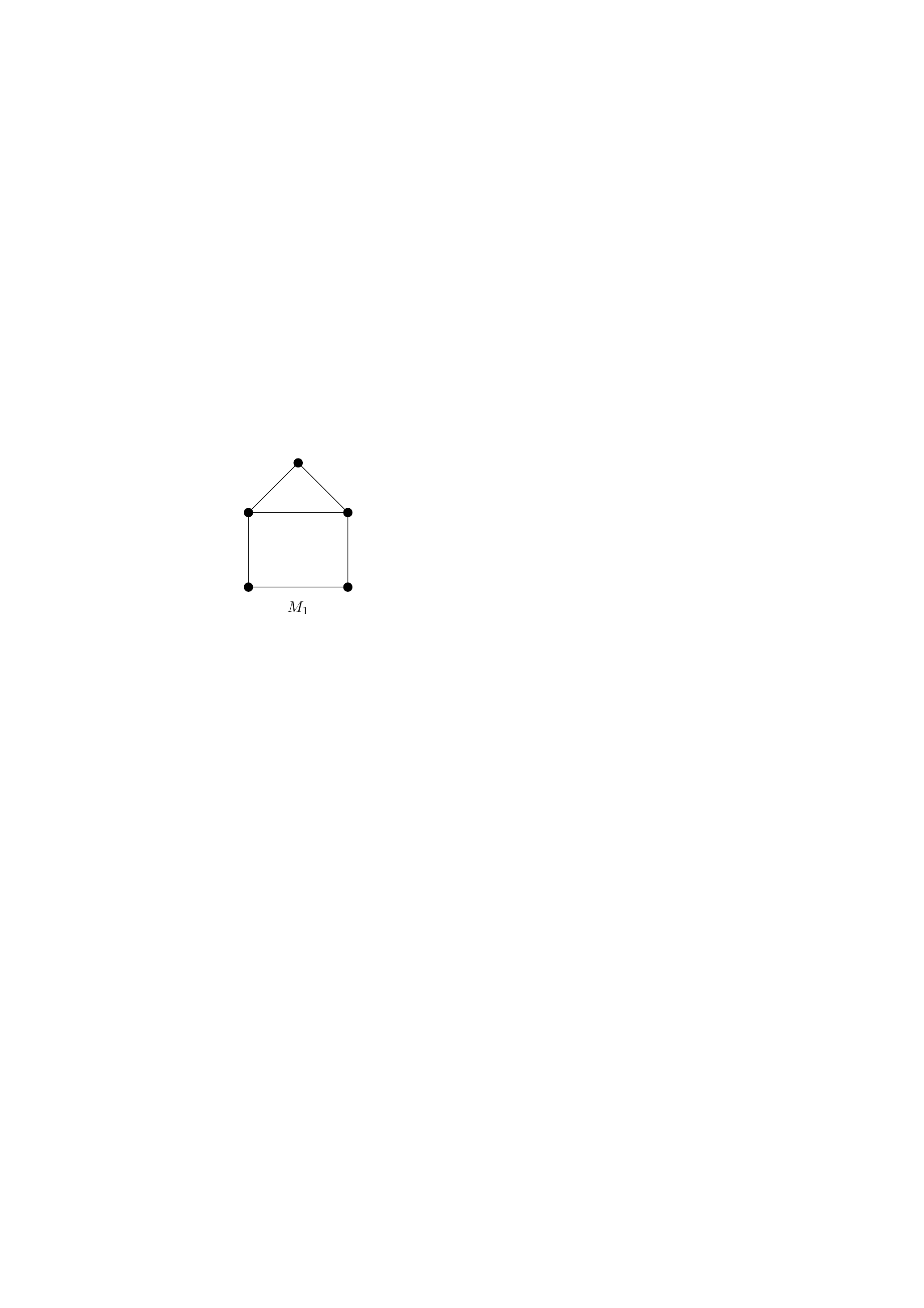}} }
\hspace{1cm}
{\subfloat[\label{figM2}]{\includegraphics[width = 0.15\textwidth]{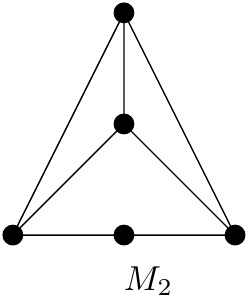}}}
\hspace{1cm}
{\subfloat[\label{figM3}]{\includegraphics[width = 0.25\textwidth]{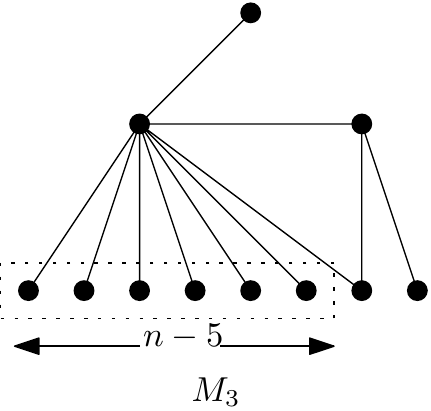}}}
\caption{(a): The graph $M_1$. (b): The graph $M_2$.  (c): The graph $M_3$.}
\label{cs}
\end{center}
\end{figure}

We are now in a position to present the following lemma.

\begin{lemma}
\label{lemcase2}
If $B \in {\cal H}_2^2$, then the $G$-SC chain is one of the following chains, where $H_1$ is a graph in the family ${\cal H}_1$. \\ [-24pt]
\begin{enumerate}
\item[{\rm (a)}] $G\rightarrow B \rightarrow H_1$,
\item[{\rm (b)}] $G\rightarrow B \rightarrow C_4\rightarrow K_4 \rightarrow \overline{K}_4$.
\item[{\rm (c)}] $G\rightarrow B \rightarrow K_{2,n-2} \rightarrow K_1 + K_{1,n-2}$,
\item[{\rm (d)}] $G \rightarrow M_1 \rightarrow \left(\, \overline{K}_2+(K_1\cup K_2)\right)\rightarrow  \overline{K}_2+K_3\rightarrow \overline{K}_3\cup K_2$,
\item[{\rm (e)}] $G\rightarrow M_2\rightarrow  \overline{K}_2+K_3\rightarrow \overline{K}_3\cup K_2$,
\item[{\rm (f)}] $G \rightarrow B \rightarrow K_3 \circ K_1$,
\item[{\rm (g)}] $G\rightarrow B\rightarrow M_3$,
\item[{\rm (h)}] $G\rightarrow B\rightarrow M_4$,
\item[{\rm (i)}] $G\rightarrow B \rightarrow K_2 + \overline{K}_{n-2}$, or
\item[{\rm (j)}] $G \rightarrow B \rightarrow (K_2 + \overline{K}_{n-2}) + e$.
\end{enumerate}
\end{lemma}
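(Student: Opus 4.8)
Write $n=|V(B)|$ and recall that $B\in{\cal H}_2^2$ is assumed to be a SP-graph, with the vertex partition $\{x',y',z'\}\cup L_1'\cup R_1'$ of Definition~\ref{def-h2}. Since the edge $x'z'$ is never present while both $x'$ and $z'$ lie in $V(B)$, the graph $B$ has no full vertex; moreover each vertex of $L_1'$ has degree $1$ or $2$ and each vertex of $R_1'$ has degree at least $1$, so $\delta(B)\in\{1,2\}$. My plan is to split on this value of $\delta(B)$, handle $\delta(B)=1$ by invoking the earlier $\delta=1$ results, and treat $\delta(B)=2$ by computing $\CG(B,\Gamma_1)$ directly. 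Throughout I use that $\CG(H,\Gamma_1)$ is determined by $H$, so the $G$-SC chain is obtained from the $B$-SC chain by prepending the single step $G\rightarrow B$.

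Suppose first that $\delta(B)=1$. Then $B$ is a SP-graph of minimum degree~$1$ with no full vertex, so Theorem~\ref{thmdelta1nf} gives $B\in{\cal F}_1$. Applying Theorem~\ref{thbyr} to $B$ shows that the $B$-SC chain is one of $B\rightarrow H_1$, $B\rightarrow C_4\rightarrow K_4\rightarrow\overline{K}_4$, or $B\rightarrow K_{2,n-2}\rightarrow K_1+K_{1,n-2}$, and prepending $G\rightarrow B$ yields chains (a), (b) and (c). (Because $B\in{\cal H}_2^2$ forces $n\ge 5$, case (b), which needs $n=4$, does not in fact arise, but listing it does no harm.)

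Now suppose $\delta(B)=2$. Then every vertex of $L_1'$ is adjacent to both $x'$ and $z'$, and every vertex of $R_1'$ is adjacent to $y'$ and to at least one of $x',z'$. I would read off $\CG(B,\Gamma_1)$ from Observation~\ref{ob:ob1} by determining the dominating pairs. The decisive facts are that $\{x'\},\{y'\},\{z'\}$ pairwise form coalitions (so they span a triangle), that no $\{r\}$ with $r\in R_1'$ forms a coalition with $\{y'\}$ (as $\{y',r\}$ cannot dominate the nonempty set $L_1'$), and that, writing $P_x=R_1'\setminus N(x')$, $P_z=R_1'\setminus N(z')$ and $a=|L_1'|$, the remaining coalitions of $x'$ and $z'$ with $L_1'\cup R_1'$ are governed by simple membership conditions in $P_x,P_z$ and $a$. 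Since $B$ is a SP-graph, each $\{r\}$ needs a partner, which forces the disjoint sets $P_x$ and $P_z$ to have size at most~$1$. This reduces the problem to a handful of configurations: if $P_x=P_z=\emptyset$ then $\CG(B,\Gamma_1)\cong K_2+\overline{K}_{n-2}$ for $a\ge 2$ and $\cong (K_2+\overline{K}_{n-2})+e$ for $a=1$, both non-SP-graphs by Theorem~\ref{thmdelta0} and hence terminal, giving (i) and (j); if exactly one of $P_x,P_z$ is nonempty then $\CG(B,\Gamma_1)$ acquires a vertex of degree $n-2$ and equals $M_3$ for $a\ge 2$ and $M_4$ for $a=1$, giving (g) and (h); and if both $P_x,P_z$ are singletons then the SP-property forces $|R_1'|=2$ and $a=1$, whence $\CG(B,\Gamma_1)\cong K_3\circ K_1$, giving (f). The smallest order $n=5$ must then be isolated separately: there $B$ coincides with $M_1$ or $M_2$, whose singleton coalition graphs are again SP-graphs, and chasing the chain two or three further steps yields $M_1\rightarrow\overline{K}_2+(K_1\cup K_2)\rightarrow\overline{K}_2+K_3\rightarrow\overline{K}_3\cup K_2$ and $M_2\rightarrow\overline{K}_2+K_3\rightarrow\overline{K}_3\cup K_2$, that is, chains (d) and (e).

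The main obstacle is the $\delta(B)=2$ analysis. The delicate points are deducing the bounds $|P_x|,|P_z|\le 1$ from the SP-hypothesis, matching each configuration of $(P_x,P_z,a)$ to the correct graph $\CG(B,\Gamma_1)$ (in particular separating $M_3$ from $M_4$, and isolating the exceptional orders where $B\cong M_1$ or $M_2$), and verifying at each step whether the computed coalition graph is a SP-graph, so that the chain either legitimately continues or correctly terminates. The remaining verifications---that $K_2+\overline{K}_{n-2}$, $(K_2+\overline{K}_{n-2})+e$, $M_3$, $M_4$, $K_3\circ K_1$ and $\overline{K}_3\cup K_2$ are not SP-graphs, and that the intermediate graphs in (d) and (e) are---are routine applications of Observation~\ref{ob:ob1} and Theorem~\ref{thmdelta0}.
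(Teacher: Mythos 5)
Your proposal is correct and follows essentially the same route as the paper's proof: split on $\delta(B)\in\{1,2\}$, dispatch $\delta(B)=1$ via Theorem~\ref{thbyr}, and for $\delta(B)=2$ classify by which edges between $R_1'$ and $\{x',z'\}$ are missing (your $P_x,P_z$ with $|P_x|,|P_z|\le 1$ is exactly the paper's ``all edges present except at most two'') together with $|L_1'|$, treating the order-$5$ case $B\cong M_1$ or $M_2$ separately. Your side remark that chain (b) cannot actually occur here since $n(B)\ge 5$ is a correct minor refinement of the statement.
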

\begin{proof}
Suppose that the singleton coalition graph $B \in {\cal H}_2^2$ (as illustrated in Figure~\ref{h2familyc2}). In this case, $L_1'\ne \emptyset$ and $R_1'\ne \emptyset$. Further the set $L_1'\cup R_1'$ is an independent set and every vertex of $L_1'$ is adjacent to $z'$ and possibly adjacent to $x'$. By construction, we have $n(B) \ge 5$ and  $1 \le \delta(B)\le 2$. By assumption, $B$ is an SP-graph.

Suppose firstly that $\delta(B)=1$.  Based on the construction of $B$, there is no full vertex in $B$. Thus by Theorem~\ref{thbyr}, the $B$-SC chain is given by the chain in part~(a),~(b) or~(c) in the statement of Theorem~\ref{thbyr}, where $H_1$ is a graph in the family ${\cal H}_1$. Thus, the $G$-SC chain is given by part~(a),~(b) or~(c) in the statement of Lemma~\ref{lemcase2}.

Hence, we may assume that $\delta(B)=2$, for otherwise the desired result of the lemma follows. Recall that by construction, the vertex $y'$ is not adjacent to any vertex of $L_1'$ but is adjacent to every vertex of $R_1'$. Hence in this case when $\delta(B) = 2$, the vertex $x'$ must be adjacent to all vertices of $L_1'$ (see Figure \ref{h2familyc2}), and so every vertex in $L_1'$ has degree~$2$ in $B$ (with $x'$ and $z'$ as its two neighbors). Further, every vertex in $R_1'$ is adjacent to~$y'$ and to at least one of $x'$ and $z'$.

\begin{claim}
\label{c:claimH22a}
If $|L_1'|=|R_1'|=1$, then the $G$-SC chain is given by part~(d) or~(e) in the statement of Lemma~\ref{lemcase2}.
\end{claim}
\begin{proof}
Suppose that $|L_1'|=|R_1'|=1$. Let $R_1' = \{r_1\}$ and $L_1' = \{z_1\}$. By our earlier observations, the vertex $z_1$ has degree~$2$ with $x'$ and $z'$ as its two neighbors. Further, the vertex $r_1$ is adjacent to~$y'$ and to at least one of $x'$ and $z'$. If $r_1$ is adjacent to exactly one of $x'$ and $z'$, then $B\cong M_1$. If $r_1$ is adjacent to both $x'$ and $z'$, then $B=M_2$. In both cases, $B$ is a SP-graph. If $B\cong M_1$, then the $G$-SC chain is the sequence
\[
G \rightarrow M_1 \rightarrow \left(\, \overline{K}_2+(K_1\cup K_2)\right)\rightarrow \overline{K}_2+K_3\rightarrow \overline{K}_3\cup K_2,
\]
while if $B\cong M_2$,  then the $G$-SC chain is the sequence
\[
G \rightarrow M_2\rightarrow  \overline{K}_2+K_3\rightarrow \overline{K}_3\cup K_2.
\]

Thus in this case when $|L_1'|=|R_1'|=1$, the $G$-SC chain is given by part~(d) or~(e) in the statement of Lemma~\ref{lemcase2}.~\smallqed
\end{proof}

\medskip
By Claim~\ref{c:claimH22a}, we may assume that $|L_1'| \ge 2$ or $|R_1'| \ge 2$, for otherwise the desired result of the lemma follows. Thus, $L_1' \cup R_1'$ is an independent set of cardinality at least~$3$ in $B$. Hence if $u$ and $v$ are two arbitrary vertices in $L_1' \cup R_1'$, the sets $\{u\}$ and $\{v\}$ do not form a coalition.

Let $r_1 \in R_1'$. Since $y'$ is not adjacent to any vertex of $L_1'$, the set $\{r_1\}$ forms a coalition with only $\{x'\}$ or $\{z'\}$.  If $\{r_1\}$ forms a coalition with $\{x'\}$, then $z'$ must be adjacent to $r_1$, and $x'$ must be adjacent to all vertices of $R_1' \setminus \{r_1\}$. If $\{r_1\}$ forms a coalition with $\{z'\}$, then $x'$ must be adjacent to $r_1$, and $z'$ must be adjacent to all vertices of $R_1 \setminus\{r_1\}$. This property holds for all vertices in $R_1'$. Thus if $|R_1'| \ge 3$, then we infer that all edges between $R_1'$ and $\{x',z'\}$ are present, except for possibly one edge. If $|R_1'| = 2$, then we infer that both $x'$ and $z'$ are adjacent to at least one vertex in $R_1'$ and, as observed earlier, every vertex in $R_1'$ is adjacent to at least one of $x'$ and $z'$. In particular, when $|R_1'| = 2$, we infer that all edges between $R_1'$ and $\{x',z'\}$ are present, except for possibly at most two edges.

\begin{claim}
\label{c:claimH22b}
If all edges between $R_1'$ and $\{x',z'\}$ are present, except for exactly two edges, then the $G$-SC chain is given by part~(f) in the statement of Lemma~\ref{lemcase2}.
\end{claim}
\begin{proof}
Suppose that all edges between $R_1'$ and $\{x',z'\}$ are present, except for exactly two edges. In this case, $R_1' = \{r_1,r_2\}$ and, renaming vertices if necessary, we may assume that $r_1$ is adjacent to $x'$ but not to $z'$, and $r_2$ is adjacent to $z'$ but not to $x'$. Thus, $\{r_1\}$ forms a coalition with $\{z'\}$, while $\{r_2\}$ forms a coalition with $\{x'\}$. If $|L_1'| \ge 2$ and $z_1 \in L_1'$, then the set $\{z_1\}$ cannot form a coalition with any other singleton set in $B$, contradicting our assumption that $B$ is an SP-graph. Hence, $|L_1'| = 1$. Let $L_1' = \{z_1\}$. We now infer that $\{z_1\}$ forms a coalition with only the set $\{y'\}$, the set $\{x'\}$ forms a coalition with each of the sets $\{y'\}$ and $\{z'\}$, and the set $\{y'\}$ and $\{z'\}$ form a coalition. However, there are no additional coalitions in $B$. Hence in this case, the singleton coalition graph of $B$ is given by $\CG(B,\Gamma_1) \cong K_3 \circ K_1$, where $K_3 \circ K_1$ denotes the corona of $K_3$ as illustrated in Figure~\ref{corona}. However, $K_3 \circ K_1$ is not a SP-graph, and so in this case the $G$-SC chain is the sequence $G \rightarrow B \rightarrow K_3 \circ K_1$ given by part~(f) in the statement of Lemma~\ref{lemcase2}.~\smallqed
\end{proof}

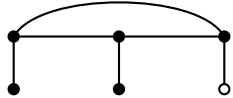
\begin{figure}[htb]
\begin{center}
\begin{tikzpicture}[scale=.7,style=thick,x=1cm,y=1cm]
\def\vr{2.75pt}
\path (0,0) coordinate (v2);
\path (0,1) coordinate (v3);
\draw (v2) -- (v3);
\path (2,0) coordinate (u2);
\path (2,1) coordinate (u3);
\draw (u2) -- (u3);
\path (4,0) coordinate (w2);
\path (4,1) coordinate (w3);
\draw (w2) -- (w3);
\draw (v3) -- (u3);
\draw (u3) -- (w3);
\draw (v2) [fill=black] circle (\vr);
\draw (v3) [fill=black] circle (\vr);
\draw (u2) [fill=black] circle (\vr);
\draw (u3) [fill=black] circle (\vr);
\draw (w2) [fill=white] circle (\vr);
\draw (w3) [fill=black] circle (\vr);
\draw (v3) to[out=60,in=120, distance=1cm ] (w3);
\end{tikzpicture}
\caption{The corona $K_3 \circ K_1$ of $K_3$}
\label{corona}
\end{center}
\end{figure}

\medskip
By Claim~\ref{c:claimH22b}, we may assume that we may assume that all edges between $R_1'$ and $\{x',z'\}$ are present, except for at most one edge.

\begin{claim}
\label{c:claimH22c}
If all edges between $R_1'$ and $\{x',z'\}$ are present, except for exactly one edge, then the $G$-SC chain is given by part~(g) or~(h) in the statement of Lemma~\ref{lemcase2}.
\end{claim}
\begin{proof}
Suppose that all edges between $R_1'$ and $\{x',z'\}$ are present, except for exactly one edge.  By symmetry, and renaming $x'$ and $z'$ if necessary, we may assume that $x'$  is not adjacent to exactly one vertex of $R'$.

Suppose that $|L_1'| \ge 2$. In this case, the graph $B$ is a SP-graph, and the singleton coalition graph of $B$ is the graph $M_3$. As an illustration, if $B$ is the graph illustrated on the left hand side of Figure~\ref{CGbig}, then the singleton coalition graph of $B$ is the graph $\CG(B, \Gamma_1) = M_3$ illustrated on the right hand side of Figure~\ref{CGbig}. However in this case, there is no vertex $\tilde{u}$ in the graph $\CG(B, \Gamma_1)$ such that $\{\tilde{z_1}\}$ forms a coalition with $\{\tilde{u}\}$, implying that $\CG(B,\Gamma_1)$ is not a SP-graph, and therefore the $G$-SC chain is the sequence $G \rightarrow B \rightarrow M_3$ given by part~(g) in the statement of Lemma~\ref{lemcase2}.

\begin{figure}[htb]
\begin{center}
	\includegraphics[width=0.7\linewidth]{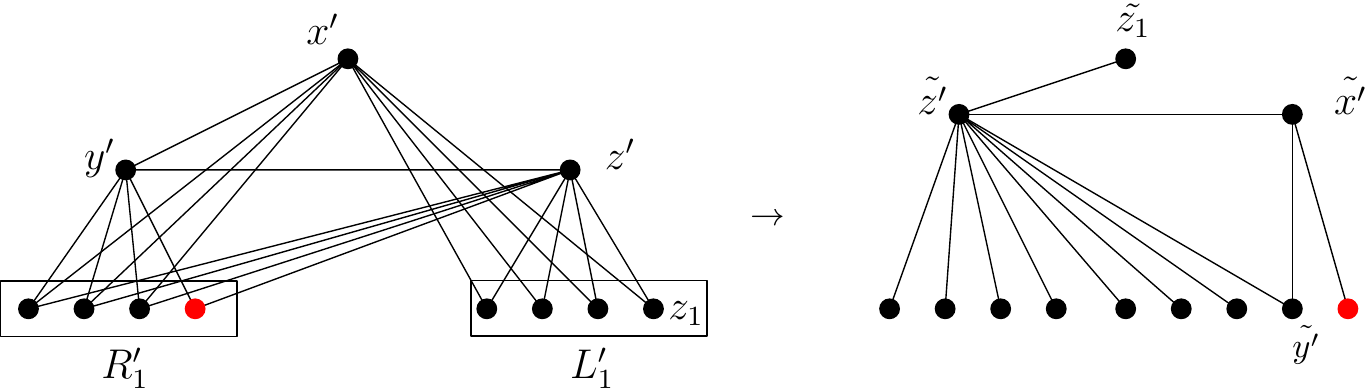}
	\caption{Left:  the graph $B$. Right: the graph  $\CG(B, \Gamma_1)$.}
  \label{CGbig}
	\end{center}
\end{figure}

Hence, we may assume that $|L_1'| = 1$. Let $L_1' = \{z_1\}$. Since $|L_1'| + |R_1'| \ge 3$, we note that $|R_1'| \ge 2$. In this case, the set $\{z_1\}$ forms a coalition with each of the sets $\{y'\}$ and $\{z'\}$, while all other coalitions from the previous case when $|L_1'| \ge 2$ remain unchanged. The graph $B$ is once again a SP-graph, and the singleton coalition graph of $B$ is the graph $M_4$. In the example in Figure~\ref{CGbig} in the case when $L_1' = \{z_1\}$, the graph $M_4$ is obtained from the graph $M_3$ on the right hand side of Figure~\ref{CGbig} by adding the edge between the vertices $\tilde{y'}$ and $\tilde{z_1}$. However noting that $|R_1'| \ge 2$, we once again infer that there is no vertex $\tilde{u}$ in the graph $\CG(B, \Gamma_1)$ such that $\{\tilde{z_1}\}$ forms a coalition with $\{\tilde{u}\}$, implying that $\CG(B,\Gamma_1)$ is not a SP-graph, and therefore the $G$-SC chain is the sequence $G \rightarrow B \rightarrow M_4$ given by part~(h) in the statement of Lemma~\ref{lemcase2}.~\smallqed
\end{proof}

\medskip
We now return to the proof of Lemma~\ref{lemcase2} one final time. By Claim~\ref{c:claimH22c}, we may assume that we may assume that all edges between $R_1'$ and $\{x',z'\}$ are present.

Suppose that $|L_1'| \ge 2$. In this case, the set $\{y'\}$ forms a coalition with only the sets $\{x'\}$ and $\{z'\}$. Each of the sets $\{x'\}$ and $\{z'\}$ form a coalition with every other singleton set in the partition $\Gamma_1$. Moreover, each vertex in $R_1' \cup L_1'$ forms a coalition with only the sets $\{x'\}$ and $\{z'\}$.  Thus in this case, the graph $B$ is a SP-graph, and the singleton coalition graph of $B$ is the graph $\CG(B, \Gamma_1) = K_2 + \overline{K}_{n-2}$.  However, the graph $K_2 + \overline{K}_{n-2}$ contains two full vertices and is not a singleton coalition graph noting that $n \ge 6$. As an illustration, if $B$ is the graph illustrated on the left hand side of Figure~\ref{CGbig2}, then the singleton coalition graph of $B$ is the graph $\CG(B, \Gamma_1) = K_2 + \overline{K}_{n-2}$ illustrated on the right hand side of Figure~\ref{CGbig2}. Therefore, $\CG(B,\Gamma_1)$ is not a SP-graph, and the $G$-SC chain is the sequence $G \rightarrow B \rightarrow K_2 + \overline{K}_{n-2}$ given by part~(i) in the statement of Lemma~\ref{lemcase2}.

\begin{figure}[htb]
\begin{center}
	\includegraphics[width=0.7\linewidth]{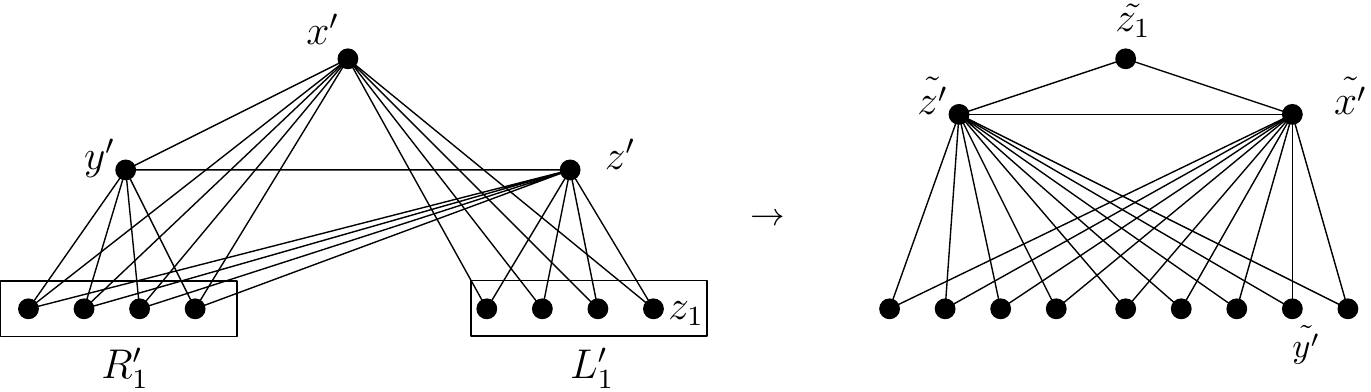}
	\caption{Left:  the graph $B$. Right: the graph  $\CG(B, \Gamma_1)$.}
  \label{CGbig2}
	\end{center}
\end{figure}

Hence, we may assume that $|R_1'| \ge 2$ and $|L_1'| = 1$. Let $L_1' = \{z_1\}$. In this case, the set $\{z_1\}$ forms a coalition with each of the sets $\{x'\}$, $\{y'\}$ and $\{z'\}$, while all other coalitions from the previous case when $|L_1'| \ge 2$ remain unchanged. Thus in this case, the graph $B$ is a SP-graph, and the singleton coalition graph of $B$ is the graph $\CG(B, \Gamma_1) = (K_2 + \overline{K}_{n-2}) + e$ obtained from $K_2 + \overline{K}_{n-2}$ by adding an edge between two vertices in the independent set. (In the example of Figure~\ref{CGbig2}  in the case when $L_1' = \{z_1\}$, the edge added to the graph on the right hand side of Figure~\ref{CGbig2} is the edge joining $\{\tilde{z_1}\}$ and $\{\tilde{y'}\}$.) However, the graph $(K_2 + \overline{K}_{n-2}) + e$ contains two full vertices and is not a singleton coalition graph noting that $n \ge 6$. Therefore, $\CG(B,\Gamma_1)$ is not a SP-graph, and the $G$-SC chain is the sequence $G \rightarrow B \rightarrow (K_2 + \overline{K}_{n-2}) + e$ given by part~(j) in the statement of Lemma~\ref{lemcase2}. This completes the proof of Lemma~\ref{lemcase2}.~\QED
\end{proof}

\medskip
We consider next the case when the graph $B$ belongs to the family ${\cal H}_2^3$.

\begin{lemma}
\label{lemcase3}
If $B \in {\cal H}_2^3$, then the $G$-SC chain is one of the following chains, where $H_1$ is a graph in the family ${\cal H}_1$. \\ [-24pt]
\begin{enumerate}
\item[{\rm (a)}] $G\rightarrow B\rightarrow H_1$,
\item[{\rm (b)}] $G\rightarrow B\rightarrow C_4\rightarrow K_4\rightarrow \overline{K}_4$,
\item[{\rm (c)}] $G\rightarrow B\rightarrow K_{2,n-2} \rightarrow K_1+K_{1,n-2}$,
\item[{\rm (d)}] $G\rightarrow C_5\rightarrow C_5\rightarrow \cdots$,
\item[{\rm (e)}] $G \rightarrow M_1 \rightarrow \left(\, \overline{K}_2+(K_1\cup K_2)\right)\rightarrow  \overline{K}_2+K_3\rightarrow \overline{K}_3\cup K_2$,
\item[{\rm (f)}] $G\rightarrow \overline{K}_3+\overline{K}_2 \rightarrow \overline{K}_2+K_3\rightarrow \overline{K}_3\cup K_2$,
\item[{\rm (g)}] $G\rightarrow\left(\, \overline{K}_2+(K_1\cup K_2)\right)\rightarrow \overline{K}_2+K_3\rightarrow \overline{K}_3\cup K_2$,
\item[{\rm (h)}] $G\rightarrow B\rightarrow \CG(B,\Gamma_1)\rightarrow \overline{K}_4$,
\item[{\rm (i)}] $G\rightarrow B\rightarrow \CG(B,\Gamma_1)\rightarrow\overline{K}_3\cup K_2$,
\item[{\rm (j)}] $G\rightarrow B \rightarrow \CG(B,\Gamma_1) \rightarrow \overline{K}_2\cup K_2$,
\item[{\rm (k)}]$G\rightarrow B \rightarrow \CG(B,\Gamma_1) \rightarrow\overline{K}_2\cup P_3$,
\item[{\rm (l)}] $G\rightarrow B \rightarrow  \CG(B,\Gamma_1) \rightarrow H_1$,
\item[{\rm (m)}] $G\rightarrow B \rightarrow \CG(B,\Gamma_1) \rightarrow C_4\rightarrow K_4 \rightarrow \overline{K}_4$.
\item[{\rm (n)}] $G\rightarrow B \rightarrow \CG(B,\Gamma_1) \rightarrow K_{2,n-2} \rightarrow K_1 + K_{1,n-2}$,
\item[{\rm (o)}] $G \rightarrow B \rightarrow M_1 \rightarrow \left(\, \overline{K}_2+(K_1\cup K_2)\right)\rightarrow  \overline{K}_2+K_3\rightarrow \overline{K}_3\cup K_2$,
\item[{\rm (p)}] $G\rightarrow B \rightarrow M_2\rightarrow  \overline{K}_2+K_3\rightarrow \overline{K}_3\cup K_2$,
\item[{\rm (q)}] $G \rightarrow B \rightarrow \CG(B,\Gamma_1) \rightarrow K_3 \circ K_1$,
\item[{\rm (r)}] $G\rightarrow B \rightarrow \CG(B,\Gamma_1)  \rightarrow M_3$,
\item[{\rm (s)}] $G\rightarrow B \rightarrow \CG(B,\Gamma_1)  \rightarrow M_4$,
\item[{\rm (t)}] $G\rightarrow B \rightarrow \CG(B,\Gamma_1)  \rightarrow K_2 + \overline{K}_{n-2}$,
\item[{\rm (u)}] $G \rightarrow B \rightarrow \CG(B,\Gamma_1)  \rightarrow (K_2 + \overline{K}_{n-2}) + e$,
\item[{\rm (v)}] $G\rightarrow K_{3,n-3}\rightarrow K_{3,n-3}\rightarrow \cdots$, or
\item[{\rm (w)}] $G\rightarrow (K_1\cup K_2)+\overline{K}_{n-3}\rightarrow P_3+\overline{K}_{n-3}\rightarrow K_1\cup K_{2,n-3}$.
\end{enumerate}
\end{lemma}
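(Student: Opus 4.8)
The plan is to follow the template of Lemmas~\ref{lemcase1} and~\ref{lemcase2}: assuming, as in the paragraph preceding Lemma~\ref{lemcase1}, that $B\in{\cal H}_2^3$ is an SP-graph, I would compute $\CG(B,\Gamma_1)$ directly from the adjacencies recorded in Definition~\ref{def-h2}, identify the resulting graph, and then prepend the arrow $G\rightarrow B$ to the chain of that graph, which in each case has already been determined. Three structural facts drive everything. First, $B$ has no full vertex: the vertices $x',y',z'$ fail to be full since $x'y',x'z'\notin E$, while every vertex of the independent set $L_1'\cup R_1'\cup R_2'\cup W'$ meets only a subset of $\{x',y',z'\}$. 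Second, because $N_B[x']=\{x'\}\cup W'$, Observation~\ref{ob:ob1} shows that a singleton $\{v\}$ forms a coalition with $\{x'\}$ exactly when $\{y',z'\}\cup L_1'\cup R_1'\cup R_2'\subseteq N_B[v]$, and the only vertices capable of this are $y'$, $z'$, or (when $L_1'\cup R_1'\cup R_2'=\emptyset$) a vertex of $W'$ adjacent to both $y'$ and $z'$; this pins down precisely which configurations of $B$ are SP-graphs. Third, every vertex of $L_1'\cup R_1'\cup R_2'$ has degree at most~$2$, since it lies in the independent set and is adjacent only to a subset of $\{y',z'\}$.

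The third fact gives the natural dichotomy. If $L_1'\cup R_1'\cup R_2'\neq\emptyset$, then $\delta(B)\le 2$, and since $B$ has no full vertex I would reduce via the characterization theorems already proved. When $\delta(B)=1$, Theorem~\ref{thmdelta1nf} gives $B\in{\cal F}_1$ and Theorem~\ref{thmfh1cg} gives $\CG(B,\Gamma_1)\in{\cal H}_1$, so the tail is supplied by Theorem~\ref{thbyr}, yielding chains~(a),~(b),~(c). When $\delta(B)=2$, Theorem~\ref{thmf2} gives $B\in{\cal F}_2$ and Theorem~\ref{lemh2f2} gives $\CG(B,\Gamma_1)\in{\cal H}_2$; according as this graph lands in ${\cal H}_2^1$ or ${\cal H}_2^2$, I would invoke Lemma~\ref{lemcase1} (producing chains~(h)--(k), which are $G\rightarrow B$ prepended to the four chains of Lemma~\ref{lemcase1}) or Lemma~\ref{lemcase2} (producing chains~(l)--(u), which are $G\rightarrow B$ prepended to the ten chains of Lemma~\ref{lemcase2}).

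The remaining case is $L_1'\cup R_1'\cup R_2'=\emptyset$, so that $V(B)=\{x',y',z'\}\cup W'$. Here I would analyse $B$ directly from the only remaining freedom: which vertices of $W'$ are adjacent to $y'$ and to $z'$, and whether $y'z'\in E$. A direct coalition count identifies the self-reproducing graphs $C_5$ and $K_{3,n-3}$, for which $\CG(B,\Gamma_1)\cong B$, giving the two infinite (periodic) chains~(d) and~(v); the graph $(K_1\cup K_2)+\overline{K}_{n-3}$ of chain~(w) and the small graphs $M_1$ and the order-five graphs underlying~(e)--(g) are then treated by hand, each time computing two or three further terms explicitly. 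I expect the main obstacle to be exactly the re-entry into ${\cal H}_2^3$: in the $\delta(B)=2$ branch it is a priori possible that $\CG(B,\Gamma_1)$ again lies in ${\cal H}_2^3$, so I must prove that this can happen only through the fixed points $C_5$ and $K_{3,n-3}$ (making the chain periodic) or through a graph whose chain is already known, so the process always terminates or becomes periodic and every SP-graph $B\in{\cal H}_2^3$ is matched to precisely one of the listed chains~(a)--(w). The secondary difficulty is purely organisational, namely checking that the degree and adjacency configurations of $B$ partition cleanly across these cases without omission or overlap.
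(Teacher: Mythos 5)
Your plan diverges from the paper's proof at its very first step, and the divergence creates a real gap. The paper does not split into the two cases $L_1'\cup R_1'\cup R_2'\neq\emptyset$ and $=\emptyset$: it first \emph{proves} that $L_1'\cup R_1'\cup R_2'=\emptyset$ whenever $B=\CG(G,\Gamma_1)\in{\cal H}_2^3$ is an SP-graph. For $|L_1'\cup R_1'\cup R_2'|\ge 2$ this follows from $B$ alone (a vertex of that set sees only a subset of $\{y',z'\}$, while $x'$ sees only $W'$, so its singleton has no coalition partner), but the subcase $|L_1'\cup R_1'\cup R_2'|=1$ is ruled out only by going back to the preimage $G\in{\cal F}_2^3$ and showing the lone leftover vertex would have had to land in $W'$. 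Your proposal never looks back at $G$, so you cannot eliminate this subcase; a graph $B$ with a single vertex $q\in R_1'$ adjacent to both $y'$ and $z'$ can perfectly well be an SP-graph in isolation, and your first branch is therefore not vacuous in your framework even though it is in the paper's. This forces all of $V(B)=\{x',y',z'\}\cup W'$, which is what makes the rest of the paper's case analysis (on $\delta(B)\in\{1,2,3\}$, with $|W'|=2$ giving the order-$5$ graphs and chains (d)--(g), and $\delta(B)=3$ giving $K_{3,n-3}$ and $(K_1\cup K_2)+\overline{K}_{n-3}$) both clean and exhaustive; your explicit list for the ``$=\emptyset$'' case omits the subcases $\delta(B)\in\{1,2\}$ with $|W'|\ge 3$, which produce chains (a)--(c) and (h)--(u).

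The second and decisive gap is the one you yourself flag as ``the main obstacle'': re-entry into ${\cal H}_2^3$. In the $\delta(B)=2$ branch you invoke Theorem~\ref{thmf2} and Theorem~\ref{lemh2f2} and then assume $\CG(B,\Gamma_1)$ lands in ${\cal H}_2^1$ or ${\cal H}_2^2$, but Theorem~\ref{lemh2f2} sends ${\cal F}_2^3$ back into ${\cal H}_2^3$, so without excluding $B\in{\cal F}_2^3$ your argument is circular. Your anticipated resolution --- that re-entry happens only through the fixed points $C_5$ and $K_{3,n-3}$ --- is not how the matter is settled: those two graphs arise in the disjoint subcases $n(B)=5$ and $\delta(B)=3$, while in the critical subcase $\delta(B)=2$, $|W'|\ge 3$ the paper proves outright that $B\notin{\cal F}_2^3$, using precisely the structure $V(B)=\{x',y',z'\}\cup W'$ with $W'$ independent (the required cliques $G[W]$, $G[L_1]$, $G[R_2]$ of Definition~\ref{def-f2} are incompatible with that independence). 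Since that structure is only available after the first step you skipped, the termination of your induction is not established, and the proposal as written does not close.
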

\begin{proof}
Suppose that the singleton coalition graph $B = \CG(G,\Gamma_1) \in {\cal H}_2^3$ (as illustrated in Figure~\ref{h2familyc31}). Recall that by assumption, the graph $G$ is a \SP with $\delta(G)=2$ and with no full vertex, and so by Theorem~\ref{thmf2} we have $G \in {\cal F}_2$. Since $\CG(G,\Gamma_1) = B \in {\cal H}_2^3$, we know from the proof of Theorem~\ref{lemh2f2} that $G\in {\cal F}_2^3$ (as illustrated in Figure~\ref{figfamily}). Adopting our notation to define the families ${\cal F}_2^3$ and ${\cal H}_2^3$, let $X = R_1\cup R_2\cup L_1 \subset V(G)$ and let $X' = R_1' \cup L_1' \cup R_2' \subset V(B)$.

We show firstly that $X' = \emptyset$. Suppose, to the contrary, that $|X'| \ge 1$. Suppose that $|X'| = 1$. By construction of the graph $G \in {\cal F}_2^3$, each vertex of $W$ is adjacent to all vertices of $X \cup L_2$. Further, $G[W]$ is a clique. Since $|X'|=1$, we have $|(X \cup L_2) \setminus W| = 1$.  Let $(X \cup L_2) \setminus W = \{g\}$, and so $\tilde{g}\notin W'$. By our earlier properties of vertices in the set $W$, the set $\{g\} \cup W$ forms a clique. The sets $\{g\}$ and $\{x\}$  form a coalition in $G$. Therefore, by the construction of $B$, the vertex  $\tilde{g} \in W'$, a contradiction. Hence,  $|R_1'\cup L_1'\cup R_2'|\ge 2$. We may assume, without loss of generality, that $R_1' \ne \emptyset$. Let $q_1, q_2 \in X'$. By construction of $B \in {\cal H}_2^3$ in Definition~\ref{def-h2}, we have $W'\ne \emptyset$ and $x'$ is adjacent only to all vertices of $W'$. Thus since $x'$ is not adjacent to any of the vertices  $q_1, q_2, y'$ and $z'$,  the set $\{q_1\}$ is not a coalition partner of $\{u\}$, where $u$ is a vertex of $B \setminus \{q_1\}$. Hence, $B$ is not a SP-graph, which is a contradiction.

Therefore, $X' = \emptyset$, implying that the vertex set of the graph $B \in {\cal H}_2^3$ is $V(B) = \{x',y',z'\} \cup W'$. Recall that $W'$ is an independent set, and that each vertex in $W'$ is adjacent to $x'$. Therefore if $w' \in W'$, then in the graph $B$ we have $N(w') \subset \{x',y'z'\}$, and so $\delta(B) \le 3$.

Suppose that $\delta(B)=0$. Since $x'$ is adjacent to all vertices of $W'$, in this case we must have $\deg_B(z')=0$ or $\deg_B(y')=0$. By assumption, $B$ is a SP-graph, and so by Theorem~\ref{thmdelta0}, we have $B \cong K_1 \cup K_{n-1}$. However since $x'$ is adjacent to neither~$y'$ nor~$z'$, such a construction of $B$ in this case is not possible. Hence, $1 \le \delta(B) \le 3$.

Suppose that $\delta(B)=1$. If $|W'| = 1$,  then  by the construction of $G\in {\cal F}_2^3$, $|W|=1$, and  since $x'$ is not adjacent to both $y'$ and $z'$ in $B$,  $\delta(G)=0$, which is a contradiction.  Hence, $|W'| \ge 2$, implying by construction that there is no full vertex in $B$. Hence applying Theorem \ref{thbyr} to the SP-graph $B$, the $B$-SC chain is one of the chains given in part~(a),~(b) or~(c) in the statement of Theorem~\ref{thbyr}. Thus, the $G$-SC chain is given by Part~(a),~(b) or~(c) in the statement of Lemma~\ref{lemcase3}. Hence, we may assume that $\delta(B) \ge 2$.

Suppose that $\delta(B) = 2$.  Since $x'$ is only adjacent to all vertices of $W'$ and $\delta(B)=2$, we have $|W'| \ge 2$. Suppose that  $|W'|=2$, and so $B$ has order~$5$. All  possible \SPs of order~$5$ are $C_5$, $M_1$,  $\overline{K}_3+\overline{K}_2$ and $\left(\overline{K}_2+(K_1\cup K_2)\right)$. Thus, in this case the $G$-SC chain is given by Part~(d),~(e),~(f) or~(g) in the statement of Lemma~\ref{lemcase3}.

Hence, we may assume that in this case $\delta(B) = 2$, we have $|W'| \ge 3$. Since $B$ is an isolate-free SP-graph, by Theorem~\ref{thmf2} we have $B \in {\cal F}_2$. Thus, $B \in {\cal F}_2^1 \cup {\cal F}_2^2 \cup {\cal F}_2^3$. We show that $B \notin {\cal F}_2^3$. Suppose, to the contrary, that $B \in{{\cal F}_2^3}$. Consider the vertices $x,y$ and $z$, and the sets $W, R_1, L_1, R_2$ and $L_2$ used in the definition of ${\cal F}_2^3$ (see Definition \ref{def-f2}).

Since $|W'| \ge 3$, the vertex $x'$ has degree at least~$3$ in $B$. Since the vertex~$x$ has degree~$2$ in ${\cal F}_2^3$, the vertex $x' \ne x$. If $x=z'$ and $y=y'$, then $z \in W'$. Hence, $W=\{x\}$ and $W'= X \cup L_2$. Since $W'$ is an independent set of $B$,  there are no edges between the vertices of $X \cup L_2$, which contradicts the definition of the family  ${\cal F}_2^3$. If $x=z'$,  $y\in W'$ and  $z\in W'$, then $y',z' \notin W$.  Hence, $W\subseteq W'$. If $|W\cap W'|=1$, then it is not hard to see that $R_2=\emptyset$ or $L_1=\emptyset$ (see Definition \ref{def-f2}), which is a contradiction. If  $|W\cap W'|>1$,  then since $W'$  is an independent set, $G[W]$ is not a clique, which is a contradiction.  The other cases for the selections of $x,y,$, and $z$ analogously leads to a contradiction. Hence, $B \not\in {{\cal F}_2^3}$. Thus, $B\in {\cal F}_2^1 \cup {\cal F}_2^2$. Hence by Theorem~\ref{lemh2f2}, $\CG(B,\Gamma_1) \in {\cal H}_2$ and by Definition~\ref{def-h2}, we infer that $\CG(B,\Gamma_1) \in {\cal H}_2^1$ or $\CG(B,\Gamma_1) \in {\cal H}_2^2$. Hence, using Lemmas~\ref{lemcase1} and~\ref{lemcase2}, the $G$-SC chain is given by Part~(l), (m), (n), (o), (p), (q), (r), (s), (t), or~(u) in the statement of Lemma~\ref{lemcase3}.

Hence, we may assume that $\delta(B) = 3$. If $y'$ is not adjacent to $z'$, then $B\cong K_{3,n-3}$. In this case, the $G$-SC chain is given by Part~(v) in the statement of Lemma~\ref{lemcase3}. If $y'$ is adjacent to $z'$, then $B\cong (K_1\cup K_2)+\overline{K}_{n-3}$. In this case, the $G$-SC chain is given by Part~(w) in the statement of Lemma~\ref{lemcase3}. This completes the proof of Lemma~\ref{lemcase3}.~\QED
\end{proof}

\medskip
As an immediate consequence of Lemmas~\ref{lemcase1},~\ref{lemcase2}, and~\ref{lemcase3}, we have the following result.

\begin{theorem}
\label{thmlsccfinal}
If $G$ is a \SP of order $n$ with $\delta(G)=2$ that contains no full vertex, then $L_{\SCC}(G)=\infty$ or $L_{\SCC}(G)\le 5$.
\end{theorem}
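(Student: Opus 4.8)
The plan is to derive the result directly from the three preceding lemmas, since together they exhaust every possibility for the singleton coalition graph $B = \CG(G,\Gamma_1)$. First I would invoke Theorem~\ref{thmf2}: because $G$ is a \SP of order $n$ with $\delta(G)=2$ and no full vertex, we have $G \in {\cal F}_2$. Applying Theorem~\ref{lemh2f2} then yields $B = \CG(G,\Gamma_1) \in {\cal H}_2 = {\cal H}_2^1 \cup {\cal H}_2^2 \cup {\cal H}_2^3$. This reduces the problem to three cases, according to which of the three subfamilies contains $B$, and in each case the appropriate lemma already lists all possible $G$-SC chains. The entire argument is then a matter of reading off and comparing the lengths of the listed chains.

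Next I would treat the two easy cases. If $B \in {\cal H}_2^1$, then by Lemma~\ref{lemcase1} the $G$-SC chain is one of four sequences, each terminating after two steps at a graph that is not a \SP (namely $\overline{K}_4$, $\overline{K}_3 \cup K_2$, $\overline{K}_2 \cup K_2$, or $\overline{K}_2 \cup P_3$), so $L_{\SCC}(G) = 2 \le 5$. If $B \in {\cal H}_2^2$, then by Lemma~\ref{lemcase2} the chain is one of ten sequences; none of these is infinite, and the longest are the length-$4$ chains of parts~(b) and~(d), so $L_{\SCC}(G) \le 4 \le 5$.

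The remaining case $B \in {\cal H}_2^3$ is governed by Lemma~\ref{lemcase3} and carries the only subtlety, so this is the step I would scrutinize most carefully. Among its twenty-three listed chains, exactly two are infinite, namely $G \rightarrow C_5 \rightarrow C_5 \rightarrow \cdots$ in part~(d) and $G \rightarrow K_{3,n-3} \rightarrow K_{3,n-3} \rightarrow \cdots$ in part~(v), each of which reaches a fixed point of the singleton coalition operation; in these cases $L_{\SCC}(G) = \infty$. For every other chain the sequence terminates at a non-\SP after at most five steps, the extremal instances being the length-$5$ chains of parts~(m) and~(o). Hence in this case $L_{\SCC}(G) = \infty$ or $L_{\SCC}(G) \le 5$.

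Combining the three cases shows that for every such $G$ we have $L_{\SCC}(G) = \infty$ or $L_{\SCC}(G) \le 5$, as claimed. I expect no genuine obstacle here, because the structural work has already been done inside the three lemmas; the only care required is the bookkeeping over the many parts of Lemma~\ref{lemcase3}, where one must confirm that the maximum finite chain length is indeed $5$ and that the only infinite chains are the two self-reproducing ones identified above.
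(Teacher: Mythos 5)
Your proposal is correct and follows the same route as the paper, which states Theorem~\ref{thmlsccfinal} as an immediate consequence of Lemmas~\ref{lemcase1},~\ref{lemcase2}, and~\ref{lemcase3}; your case analysis of the chain lengths (maximum finite length $5$ from parts~(m) and~(o) of Lemma~\ref{lemcase3}, infinite only for the $C_5$ and $K_{3,n-3}$ fixed points) matches what those lemmas give. The only detail worth adding is the degenerate case where $B=\CG(G,\Gamma_1)$ is not itself an SP-graph, which the paper disposes of before the lemmas and which yields $L_{\SCC}(G)=1\le 5$.
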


\section{Conclusion}

In this paper, we have addressed two open problems posed by Haynes et al (see \cite{coal0,coal3}). The first problem was to characterize all graphs $G$ of order $n$ and minimum degree $\delta(G)=2$, such that $C(G)=n$.  The second problem was to investigate singleton coalition graph chains. We showed that there exist singleton coalition graph chains with the infinite length. Moreover, we characterized the singleton coalition graph chains starting with the graphs $G$ satisfying $\delta(G) \le 2$. It would be interesting to characterize the singleton coalition graph chains starting with the graphs $G$ satisfying $\delta(G) \ge 3$.
\medskip

\end{document}